\documentclass[10pt]{amsart}
\usepackage[T1]{fontenc}
\usepackage[utf8]{inputenc}
\usepackage[english]{babel}
\usepackage{csquotes} 
\usepackage[a4paper,margin=3cm]{geometry}
\allowdisplaybreaks
\usepackage[draft]{graphicx}
\usepackage[backend=biber,style=alphabetic,bibencoding=utf8,language=auto,autolang=other,giveninits=true,doi=false,isbn=false,url=false,maxnames=10,sorting=nty]{biblatex}
\DeclareNolabel{
  \nolabel{\regexp{[\p{P}\p{S}\p{C}\p{Z}]+}}
}
\usepackage{caption} 
\usepackage[hyperfootnotes=false]{hyperref} 
\hypersetup{
	colorlinks = true,
	linkcolor = {blue},
	urlcolor = {red},
	citecolor = {blue}
}
\usepackage[nameinlink,capitalise,sort]{cleveref}
\crefname{equation}{}{} 

\crefname{figure}{Figure}{Figures}

\usepackage{amsmath}
\usepackage{amsthm}
\usepackage{amssymb}
\usepackage{esint} 
\usepackage{mathrsfs} 
\usepackage{faktor} 
\usepackage{dsfont} 
\usepackage[dvipsnames]{xcolor}
\usepackage{array}
\usepackage{hhline}
\usepackage[normalem]{ulem}
\usepackage{enumitem} 
\usepackage{comment} 
\usepackage[toc,page]{appendix} 
\usepackage{imakeidx} 
\usepackage{xparse} 
\usepackage{mathtools}
\usepackage{fancyhdr} 
\usepackage{ifthen} 
\usepackage{forloop} 
\usepackage{xstring}
\usepackage{tikz}
\usetikzlibrary{positioning, arrows.meta}
\usetikzlibrary{babel} 
\usetikzlibrary{cd} 
\usepackage{tikz-cd} 
\usepackage{emptypage} 
\usepackage{listings} 
\usepackage{mathabx} 
\usepackage{subcaption}

\theoremstyle{plain}
\newtheorem{lemma}{Lemma}[section]

\newtheorem{theorem}[lemma]{Theorem}

\newtheorem{remark}[lemma]{Remark}

\theoremstyle{definition}

\theoremstyle{remark}

\numberwithin{equation}{section}

\newcommand{\R}{\mathbb{R}}

\newcommand{\T}{\mathbb{T}}

\newcommand{\sgn}[1]{\mathrm{sign}\left(#1\right)}

\newcommand{\p}{\partial}

\begin{document}


\title[Stability and instability of 1D MHD]{Stability and instability of a one-dimensional MHD model}

\author[N.~De Nitti]{Nicola De Nitti}
\address[N.~De Nitti]{Università di Pisa, Dipartimento di Matematica, Largo Bruno Pontecorvo 5, 56127 Pisa, Italy.}
\email[]{nicola.denitti@unipi.it}

\author[J.~Guo]{Jie Guo}
\address[J.~Guo]{Capital Normal University, School of Mathematical Sciences, West Third Ring North Road 105, Haidian District, 100048 Beijing, P.\,R.~China.}
\email[]{2230501023@cnu.edu.cn}

\author[Q.~Jiu]{Quansen Jiu}
\address[Q.~Jiu]{Capital Normal University, School of Mathematical Sciences, West Third Ring North Road 105, Haidian District, 100048 Beijing, P.\,R.~China.}
\email[]{jiuqs@cnu.edu.cn}

\keywords{1D magnetohydrodynamics; Constantin--Lax--Majda model; Okamoto--Sakajo--Wunsch model; De Gregorio model; stability; instability.}

\subjclass[2020]{%
35B35,	
35Q35,	
76W05,	
76B03.	
}

\begin{abstract}

We consider a one-dimensional magnetohydrodynamics model introduced by Dai \textit{et al.}~(2023), in a parameter regime where, in the absence of a magnetic field, the system reduces to the De Gregorio model for the Euler equations. We analyze stability and instability near the first excited state on the torus, thus generalizing the recent results obtained by Guo and Jiu~(2025) for the De Gregorio model. Specifically, we establish global well-posedness of the linearized system, local well-posedness for the nonlinear system, and demonstrate both linear and nonlinear instability for a broad class of initial data in the weighted Sobolev space introduced by Lai \textit{et al.}~(2020). We identify the principal linearized operator, which is structurally equivalent to that of the De Gregorio model, as the primary mechanism of instability. Moreover, we prove global well-posedness and stability of both linear and nonlinear systems for initial data in a particular subspace of the aforementioned weighted Sobolev space.

\end{abstract}

\maketitle

\section{Introduction}
\label{sec:intro}

\subsection{One-dimensional models in fluid dynamics and magnetohydrodynamics}
\label{ssec:models}

We consider the following \emph{one-dimensional magnetohydrodynamics} (MHD) \emph{model} with mixed vortex stretching effects, introduced in \cite{dai2023}:
\begin{align}\label{eq:MHD1}
\begin{cases}
\partial_t \omega^{+} + a\, u^{-} \partial_\theta \omega^{+} = p\, \omega^{+} H \omega^{-} + q\, \omega^{-} H \omega^{+}, & t > 0, \ \theta \in \mathbb{T}, \\
\partial_t \omega^{-} + a\, u^{+} \partial_\theta \omega^{-} = p\, \omega^{-} H \omega^{+} + q\, \omega^{+} H \omega^{-}, & t > 0, \ \theta \in \mathbb{T},
\end{cases} 
\end{align}
posed on the torus \(\mathbb{T} \coloneqq [-\pi, \pi]\), where \(a,\, p,\, q \in \mathbb{R}\), \(H\) denotes the \emph{Hilbert transform} on \(\mathbb{T}\), defined by
\[
H f(\theta) \coloneqq \frac{1}{2\pi} \ \mathrm{p.\,v.\,} \int_{-\pi}^\pi \cot\left( \frac{\theta - \vartheta}{2} \right) f(\vartheta) \, \mathrm d\vartheta,
\]
and  \(u^{\pm}\) are determined by
\begin{align}\label{eq:MHDu}
\partial_\theta u^{\pm} = H \omega^{\pm}, \quad \text{with} \quad u^{\pm}(t,0) \equiv 0, \quad t > 0, \ \theta \in \mathbb{T}.
\end{align}

Let us briefly recall the motivation for \crefrange{eq:MHD1}{eq:MHDu}, following the derivation presented in \cite[Section~1.2]{dai2023}. The system of \emph{ideal incompressible magnetohydrodynamics} in three space dimensions is given by
\begin{equation}\label{eq:MHD3D}
\begin{cases}
\partial_t \boldsymbol{v} + (\boldsymbol{v} \cdot \nabla) \boldsymbol{v} - (\boldsymbol{b} \cdot \nabla) \boldsymbol{b} + \nabla P = 0, & t > 0, \ \theta \in \mathbb{T}^3,\\
\partial_t \boldsymbol{b} + (\boldsymbol{v} \cdot \nabla) \boldsymbol{b} - (\boldsymbol{b} \cdot \nabla) \boldsymbol{v} = 0, & t > 0, \ \theta \in \mathbb{T}^3, \\
\nabla \cdot \boldsymbol{v} = 0, \quad \nabla \cdot \boldsymbol{b} = 0, & t > 0, \ \theta \in \mathbb{T}^3,
\end{cases}
\end{equation}
where the vector fields $\boldsymbol{v}$ and $\boldsymbol{b}$ denote the fluid velocity and magnetic field, respectively; the scalar function $P$ is the pressure. 
Introducing the \emph{Els\"asser variables}~\cite{PhysRev},
\begin{align}\label{eq:ElV}
\boldsymbol{u}^+ \coloneqq \boldsymbol{v} + \boldsymbol{b}, \qquad \boldsymbol{u}^- \coloneqq \boldsymbol{v} - \boldsymbol{b},
\end{align}
the system \cref{eq:MHD3D} is symmetrized:
\begin{equation}\label{eq:EV}
\begin{cases}
\partial_t \boldsymbol{u}^+ + (\boldsymbol{u}^- \cdot \nabla) \boldsymbol{u}^+ + \nabla P = 0, & t > 0, \ \theta \in \mathbb{T}^3, \\
\partial_t \boldsymbol{u}^- + (\boldsymbol{u}^+ \cdot \nabla) \boldsymbol{u}^- + \nabla P = 0, & t > 0, \ \theta \in \mathbb{T}^3,\\
\nabla \cdot \boldsymbol{u}^+ = 0, \quad \nabla \cdot \boldsymbol{u}^- = 0, & t > 0, \ \theta \in \mathbb{T}^3.
\end{cases}
\end{equation}
Defining
\[
\boldsymbol{\omega}^+ \coloneqq \nabla \times \boldsymbol{u}^+ = \nabla \times \boldsymbol{v} + \nabla \times \boldsymbol{b}, \qquad \boldsymbol{\omega}^- \coloneqq \nabla \times \boldsymbol{u}^- = \nabla \times \boldsymbol{v} - \nabla \times \boldsymbol{b},
\]
(where $\nabla \times \boldsymbol{v}$ and $\nabla \times \boldsymbol{b}$ are the \emph{vorticity} and \emph{magnetic current}, respectively) we have, from \cref{eq:EV},
\begin{equation}\label{eq:VVMHD0}
\begin{cases}
\partial_t \boldsymbol{\omega}^+ + (\boldsymbol{u}^- \cdot \nabla) \boldsymbol{\omega}^+ - (\boldsymbol{\omega}^+ \cdot \nabla) \boldsymbol{u}^- + \nabla \times (\boldsymbol{u}^- \nabla \boldsymbol{u}^+) = 0, & t > 0, \ \theta \in \mathbb{T}^3,\\
\partial_t \boldsymbol{\omega}^- + (\boldsymbol{u}^+ \cdot \nabla) \boldsymbol{\omega}^- - (\boldsymbol{\omega}^- \cdot \nabla) \boldsymbol{u}^+ + \nabla \times (\boldsymbol{u}^+ \nabla \boldsymbol{u}^-) = 0, & t > 0, \ \theta \in \mathbb{T}^3, \\
\boldsymbol{u}^+ = \nabla \times (-\Delta)^{-1} \boldsymbol{\omega}^+, \qquad \boldsymbol{u}^- = \nabla \times (-\Delta)^{-1} \boldsymbol{\omega}^-, & t > 0, \ \theta \in \mathbb{T}^3,
\end{cases}
\end{equation}
where $(\boldsymbol{u}^- \nabla \boldsymbol{u}^+)_j = u^-_i \partial_j u^+_i$ and $(\boldsymbol{u}^+ \nabla \boldsymbol{u}^-)_j = u^+_i \partial_j u^-_i$ for $j \in \{1,2,3\}$. Owing to the differential identities
\begin{align*}
\nabla \times (\boldsymbol{u}^- \nabla \boldsymbol{u}^+) &= \tfrac{1}{2} \nabla \times (\boldsymbol{u}^- \nabla \boldsymbol{u}^+) - \tfrac{1}{2} \nabla \times (\boldsymbol{u}^+ \nabla \boldsymbol{u}^-), \\
\nabla \times (\boldsymbol{u}^+ \nabla \boldsymbol{u}^-) &= \tfrac{1}{2} \nabla \times (\boldsymbol{u}^+ \nabla \boldsymbol{u}^-) - \tfrac{1}{2} \nabla \times (\boldsymbol{u}^- \nabla \boldsymbol{u}^+),
\end{align*}
we can rewrite \cref{eq:VVMHD0} into
\begin{equation}\label{eq:VVMHD}
\left\{
\begin{aligned}
\partial_t \boldsymbol{\omega}^+ 
&+ (\boldsymbol{u}^- \cdot \nabla) \boldsymbol{\omega}^+ 
- (\boldsymbol{\omega}^+ \cdot \nabla) \boldsymbol{u}^- \\
&+ \tfrac{1}{2} \nabla \times (\boldsymbol{u}^- \nabla \boldsymbol{u}^+) 
- \tfrac{1}{2} \nabla \times (\boldsymbol{u}^+ \nabla \boldsymbol{u}^-) 
= 0, 
&& t > 0,\ \theta \in \mathbb{T}^3, \\
\partial_t \boldsymbol{\omega}^- 
&+ (\boldsymbol{u}^+ \cdot \nabla) \boldsymbol{\omega}^- 
- (\boldsymbol{\omega}^- \cdot \nabla) \boldsymbol{u}^+ \\
&+ \tfrac{1}{2} \nabla \times (\boldsymbol{u}^+ \nabla \boldsymbol{u}^-) 
- \tfrac{1}{2} \nabla \times (\boldsymbol{u}^- \nabla \boldsymbol{u}^+) 
= 0, 
&&t > 0,\ \theta \in \mathbb{T}^3, \\
\boldsymbol{u}^+ 
&= \nabla \times (-\Delta)^{-1} \boldsymbol{\omega}^+, \quad 
\boldsymbol{u}^- 
= \nabla \times (-\Delta)^{-1} \boldsymbol{\omega}^-, 
&& t > 0,\ \theta \in \mathbb{T}^3.
\end{aligned}
\right.
\end{equation}

As a one-dimensional model of \cref{eq:VVMHD}, the following system was proposed in \cite[Eq.~(1.11)]{dai2023}:
\begin{equation}\label{eq:MHD1-DVZ}
\begin{cases}
\partial_t \omega^+ + u^-\, \partial_\theta \omega^+ - \omega^+\, H \omega^- + \tfrac{1}{2} \omega^-\, H \omega^+ - \tfrac{1}{2} \omega^+\, H \omega^- = 0, & t > 0, \ \theta \in \mathbb{T}, \\
\partial_t \omega^- + u^+\, \partial_\theta \omega^- - \omega^-\, H \omega^+ + \tfrac{1}{2} \omega^+\, H \omega^- - \tfrac{1}{2} \omega^-\, H \omega^+ = 0, & t > 0, \ \theta \in \mathbb{T},\\ 
\partial_\theta u^\pm = H \omega^\pm & t > 0, \ \theta \in \mathbb{T}.
\end{cases}
\end{equation}
We note that \cref{eq:MHD1-DVZ} corresponds to \crefrange{eq:MHD1}{eq:MHDu} for the particular parameter choice $a = 1$, $p = \tfrac{3}{2}$, $q = -\tfrac{1}{2}$. More generally, one can introduce the parameters $a\in \R$ and $p-q=2$, with $p \ge 0$,  $- 1\le q \le 0$ to capture the anti-symmetry feature of \cref{eq:VVMHD}. 
On the other hand, starting from \cref{eq:VVMHD0}, the following system can be proposed (cf.~\cite[Eq.~(1.10)]{daiArXiv}, the ArXiv version of \cite{dai2023}): 
\begin{equation}\label{eq:MHD1-DVZ-arxiv}
\begin{cases}
\partial_t \omega^+ 
+ 2\, u^- \, \partial_\theta \omega^+
- \omega^+ \, H \omega^-
- \omega^- \, H \omega^+ = 0, & t > 0, \ \theta \in \mathbb{T},\\
\partial_t \omega^- 
+ 2\, u^+ \, \partial_\theta \omega^-
- \omega^- \, H \omega^+
- \omega^+ \, H \omega^- = 0, & t > 0, \ \theta \in \mathbb{T}, \\ 
\partial_\theta u^\pm = H \omega^\pm & t > 0, \ \theta \in \mathbb{T}, 
\end{cases}
\end{equation}
which corresponds to \crefrange{eq:MHD1}{eq:MHDu} for the particular parameter choice $a = 2$, $p = 1$, $q = 1$. More generally, one can introduce the parameters $a,\,p,\,q \in \mathbb R$, with $a= \frac{p}{2} = \frac{q}{2}$. 

We also mention that \textit{one-dimensional  electron-magnetohydrodynamics models} have been studied in \cite{zbMATH08048827,dai2025wellposednessblowup1delectron}.

In the absence of a magnetic field (that is, when $\boldsymbol{b} \equiv \boldsymbol{0}$, which is equivalent to \(\boldsymbol{u}^- \equiv \boldsymbol{u}^-\) and \(\boldsymbol{\omega}^+ \equiv \boldsymbol{\omega}^-\)), \cref{eq:MHD3D} reduces to the \emph{incompressible Euler system}:
\begin{equation}\label{eq:Euler}
\begin{cases}
\partial_t \boldsymbol{v} + (\boldsymbol{v} \cdot \nabla) \boldsymbol{v} + \nabla P = 0, & t > 0, \ \theta \in \mathbb{T}^3, \\
\nabla \cdot \boldsymbol{v} = 0, & t > 0, \ \theta \in \mathbb{T}^3.
\end{cases}
\end{equation}
The vorticity $\boldsymbol{\omega} \coloneqq \nabla \times \boldsymbol{v}$ of \cref{eq:Euler} satisfies
\begin{equation}\label{eq:EulerVorticity}
\begin{cases}
\partial_t \boldsymbol{\omega} + (\boldsymbol{v} \cdot \nabla) \boldsymbol{\omega} + (\boldsymbol{\omega} \cdot \nabla) \boldsymbol{v} = 0, & t > 0, \ \theta \in \mathbb{T}^3,\\
\boldsymbol{v} = \nabla \times (-\Delta)^{-1} \boldsymbol{\omega}, & t > 0, \ \theta \in \mathbb{T}^3.
\end{cases}
\end{equation}

Consequently, the one-dimensional MHD model in \crefrange{eq:MHD1}{eq:MHDu} is closely related to one-dimensional models of \cref{eq:EulerVorticity}. In particular, if \(\omega^+ \equiv \omega^-\), \(a,\,p,\,q \in \mathbb R\), and \(p+q=1\), \crefrange{eq:MHD1}{eq:MHDu} reduces to the \emph{Okamoto--Sakajo--Wunsch model} (also known as \emph{generalized Constantin--Lax--Majda model}) introduced in \cite{MR2439488}: 
\begin{align}\label{eq:OSW}
\partial_t \omega + a\, u\, \partial_\theta \omega &= \omega H \omega, &&  t > 0, \ \theta \in \mathbb{T},
\\
\label{eq:OSWu}  \partial_\theta u &= H \omega, \quad  \text{with} \quad u(t,0) \equiv 0, &&  t > 0, \ \theta \in \mathbb{T},
\end{align}
with a parameter \( a \in \mathbb{R} \) in front of the transport term $u \, \partial_\theta \omega$, to give it a different weight compared to the vortex stretching term $\omega H \omega$. Here,   \cref{eq:OSWu} is  a one-dimensional analogue of \emph{Biot--Savart's law} $\boldsymbol u = \nabla \times (-\Delta)^{-1} \boldsymbol{\omega}$.

When $a = 0$ (no transport effects), \crefrange{eq:OSW}{eq:OSWu} reduces to the \emph{Constantin--Lax--Majda model} (introduced in \cite{MR812343}):
\begin{align}\label{eq:CLM}
\partial_t \omega = \omega H \omega, \quad \text{where} \quad \partial_\theta u = H \omega, \quad u(t,0) \equiv 0, \qquad  t > 0, \ \theta \in \mathbb{T},
\end{align}

When $a = 1$ (equally weighted transport and stretching), \crefrange{eq:OSW}{eq:OSWu} reduces to the \emph{De Gregorio model} (introduced in \cite{DG1996,zbMATH04172622}):
\begin{align}\label{eq:DG1}
\partial_t \omega + u\, \partial_\theta \omega = \omega H \omega, \quad \text{where} \quad \partial_\theta u = H \omega, \quad u(t,0) \equiv 0, \qquad t > 0, \ \theta \in \mathbb{T}.
\end{align}

When $a = -1$, \crefrange{eq:OSW}{eq:OSWu} corresponds to the \emph{C\'ordoba--C\'ordoba--Fontelos model} introduced in \cite{MR2179734} for the \emph{2D quasi-geostrophic equation}:
\begin{align}\label{eq:CCF}
\partial_t \omega - u\, \partial_\theta \omega = \omega H \omega, \quad \text{where} \quad \partial_\theta u = H \omega, \quad u(t,0) \equiv 0, \qquad t > 0, \ \theta \in \mathbb{T}.
\end{align}

\subsubsection{Singularity formation vs global well-posedness for the Okamoto--Sakajo--Wunsch model}

These one-dimensional models are employed to investigate potential singularities in three-dimensional fluid equations, capturing the core mechanisms of 3D dynamics while remaining tractable for mathematical analysis.

For $a = 0$ (i.\,e., for the original Constantin--Lax--Majda model \cref{eq:CLM}), finite-time singularity was established already in \cite{MR812343} (cf.~also \cite{zbMATH08051718}). 

When $a < 0$, heuristically, the transport effect works together with the stretching effect to develop singularities. For the special case $a = -1$ (the C\'ordoba--C\'ordoba--Fontelos model \cref{eq:CCF}), finite-time singularity from smooth initial data was established in \cite{MR2179734}; the result was later extended to all $a < 0$ in \cite{MR2680191}. For a closely related nonlocal model, cusp formation was established in \cite{MR3621816}.

On the other hand, when $a > 0$, there are competing nonlocal stabilizing effects due to advection and destabilizing effects due to vortex stretching, which are of the same order in terms of scaling.\footnote{~The stabilizing effect of advection has also been studied in \cite{MR2388660} for an exact 1D model of the 3D axisymmetric Navier--Stokes equations along the symmetry axis, and in \cite{zbMATH05531021} for a 3D model of the axisymmetric Navier--Stokes equations.} 

For small positive $a$, it is expected that the vortex stretching term dominates the advection term. Indeed, in \cite{MR4065651}, an exact self-similar finite-time blow-up from smooth initial data was constructed for sufficiently small $|a|$. In \cite{MR4242826}, the stability of an approximate self-similar profile was established, along with the formation of asymptotically self-similar finite-time blow-up from smooth, compactly supported initial data. Similar results on the stability of the self-similar solutions constructed in \cite{MR4065651} were obtained in \cite{MR4259877}. For $a$ close to $1/2$, where the vortex stretching term is relatively stronger than the advection, self-similar blow-up was obtained in \cite{MR4300258,MR4284536}.

In \cite{MR4284536}, finite-time singularity formation for \crefrange{eq:OSW}{eq:OSWu} from smooth initial data on the torus was proven for $1 - \delta < a < 1$; on the other hand, for $1 < a < 1 + \delta$, for some $\delta > 0$, it was shown that the solution exists globally and satisfies the decay estimate
\(
\|\omega(t,\cdot)\|_{H^1} = {O}(t^{-1})\) as \( t \to +\infty\). In particular, when (formally) $a=\infty$ (i.\,e., the vortex stretching term $\omega H \omega$ is neglected and only transport effects remain),  global well-posedness of \crefrange{eq:OSW}{eq:OSWu} can be obtained using the conservation of $\|\omega\|_{L^\infty}$ (see, e.\,g., \cite{MR2439488}). 

For $a = 1$ (the De Gregorio model \cref{eq:DG1}), the analysis becomes more challenging, as advection and vortex stretching are comparable, and different behaviors arise on the real line and on the torus.\footnote{~For discussions of the Okamoto--Sakajo--Wunsch model \textit{with viscosity}, see, e.\,g., \cite{zbMATH07789595,MR840339,MR2865810,MR2397459,MR2662457,MR4105366}.} 

On the whole line $\mathbb{R}$, finite-time singularity was established in \cite{MR4242826}  for certain smooth odd initial data. Finite-time blow-up can also occur for initial data of lower regularity, for which the equations are nevertheless still locally well-posed, as shown in \cite{MR4065651}. Self-similar blow-ups for $a = 1$ were constructed in \cite{MR4242826,MR4630486} (more recently, the study of self-similar blow-ups for all $a \le 1$ was completed in \cite{MR4721709}).

On the {torus} $\mathbb{T}$, the nonlinear stability of the \emph{ground state}\footnote{~Using \cref{le:Heo}, we can compute that 
\[
\Omega_\kappa(\theta) \coloneqq -\sin(\kappa\,\theta), \quad U_\kappa(\theta) \coloneqq \frac{1}{\kappa}\,\sin(\kappa\,\theta),  \qquad \text{for $\kappa \ge 1$},
\]
are \emph{stationary solutions} (or \emph{steady states}) of \cref{eq:DG1}; see \cite[Lemma 2.7]{guo2025}. We call $(\Omega_1, U_1)$  \emph{ground state} and, if $\kappa \ge2$, we call $(\Omega_\kappa, U_\kappa)$ the \emph{($k-1$)-th excited state} of \cref{eq:DG1} on the torus $\mathbb T$. 

In what follows, we use similar terminology for the MHD model \crefrange{eq:MHD1}{eq:MHDu} as well.} 
\begin{align*}
\Omega_1^\pm(\theta) \coloneqq -\sin(\theta), \quad U_1^\pm(\theta) \coloneqq \sin(\theta),
\end{align*}
for \cref{eq:DG1} was proven in \cite{JSS2019} using spectral theory and complex-variable methods.\footnote{~This is consistent with numerical simulations suggesting that, for general smooth initial data, solutions of the De Gregorio model \cref{eq:DG1} converge to the ground states (see \cite[Section~4]{MR2439488}).} In \cite{LLR2020}, global well-posedness of solutions to \cref{eq:DG1} on the real line or the circle was established when the initial vorticity $\omega_0$  does not change sign and satisfies $|\omega_0|^{1/2} \in H^1(\mathbb{T})$; the analysis also revisits the proof of nonlinear stability of the ground state on the torus given in \cite{JSS2019}, and provides an alternative approach using a direct energy method in a suitably chosen energy space. In \cite{MR4875613}, finite time blow-up was established for a class of $C^\alpha$ initial data (for any $\alpha < 1$) that are odd and do not change sign only on one side of the origin.

\subsubsection{Stability for the one-dimensional MHD model}

Concerning the MHD model \crefrange{eq:MHD1}{eq:MHDu}, in \cite{dai2023}, \( H^1 \)-local-in-time well-posedness of the initial-value problem was established in the case \( p = 0 \), \( q = 1 \), and \( a \in \mathbb{R} \), along with a continuation criterion based on controlling \( \| H\omega^+ \|_{L^\infty} + \| H\omega^- \|_{L^\infty} \). In addition, global well-posedness was demonstrated in the case \( p = q = 0 \). An alternative proof of local existence for arbitrary \(a,\, p,\, q \in \mathbb{R}\), as well as a continuation criterion, is provided in \cite[Appendix~A]{sun2025}.

Moreover, the stability of the MHD model \crefrange{eq:MHD1}{eq:MHDu}, with parameters
\begin{align}\label{ass:parameters}
a = 1,\qquad p + q = 1,\qquad p,\,q\ge0,
\end{align}
i.\,e., in the setting that reduces to the De Gregorio model \cref{eq:DG1} when $\omega^+ \equiv \omega^-$, was studied in \cite{sun2025} using techniques similar to those developed in \cite{LLR2020} for \cref{eq:DG1}, yielding an analogous exponential stability result for the ground state. More precisely, in \cite[Theorem 1.1]{sun2025}, under the stronger assumption
\begin{align}\label{ass:q=0}
a = 1,\qquad p = 1,\qquad q = 0,
\end{align}
it is proven that for any mean‐zero \(H^2\) initial perturbation \(\omega_0^\pm\) sufficiently small in a suitable weighted Sobolev space (the one recalled in \cref{ssec:spaces}) around the \textit{ground state}
\[
\Omega_1^\pm (\theta) \coloneqq -\sin\theta, \qquad U_1^\pm(\theta) \coloneqq \sin(\theta),
\]
the corresponding solution \(\omega^\pm\) exists globally and converges exponentially fast back to \(\Omega_1\) in the same weighted Sobolev norm. In \cite[Theorem 1.2]{sun2025}, under the assumption
\begin{align}\label{ass:q_small}
a=1, \qquad p = 1 - q,\qquad 0 < q < \tfrac14
\end{align}
(a slight generalization of \cref{ass:q=0}, in which \(q\) is allowed to be non-zero but small), 
it is shown that any mean‐zero \(H^2\) perturbation yields a global solution which, up to a well-behaved time‐dependent modulation \(h=h(t)\), decays exponentially toward the ground state $\Omega_1$.

\subsection{Summary of the main results and outline of the paper}
\label{ssec:summary}

More recently, progress has been made in understanding the stability of \emph{excited states} for the De Gregorio model \cref{eq:DG1}:
\[
\Omega_\kappa(\theta) \coloneqq -\sin(\kappa\,\theta), \quad U_\kappa(\theta) \coloneqq \frac{1}{\kappa}\,\sin(\kappa\,\theta)  \qquad \text{for $\kappa \ge 2$.}
\]
For $\kappa=2$, numerical simulations mentioned in \cite{JSS2019} and \cite{LLR2020} highlight instability and, on the other hand, that perturbations of the type  $\sin(2\theta) + \varepsilon \cos \theta$ converge to some multiple of $- \sin (2\theta)$ instead of ground states. A rigorous result in this direction was recently established in \cite{guo2025}, where the authors analyzed the stability and instability of perturbations around the first excited state on the torus. More precisely, for $\kappa=2$, the linear and nonlinear instability are established for a broad class of initial data, while nonlinear stability is proved for another large class of initial data. 

We recall some key ideas of the strategy of \cite{guo2025} in later sections, as they are helpful for our study as well. In particular, we stress that one of the new elements in the instability analysis of \cite{guo2025} is the derivation of a second-order ordinary differential equation for the Fourier coefficients of solutions and the analysis of the spectral properties of a positive definite quadratic form arising from it. 

The goal of this paper is to extend the analysis of \cite{guo2025} to the one-dimensional MHD model given in \crefrange{eq:MHD1}{eq:MHDu}, with parameters as in \cref{ass:q=0}. In this paper,
we cannot address the general parameter range \cref{ass:parameters}, but some extensions to cases when $q \neq 0$ are also possible (see \cref{rk:qneq0} below).

We focus on the study of the stability and instability of  \crefrange{eq:MHD1}{eq:MHDu} near the first excited state 
\begin{align*}
\Omega_2^\pm(\theta) \coloneqq -\sin(2\,\theta), \quad U_2^\pm(\theta) \coloneqq \frac{1}{2}\,\sin(2\,\theta). 
\end{align*}
Moreover, the case of higher excited states (i.\,e., $\kappa \ge 3$) presents several further challenges, which are discussed in \cref{rk:arbitrarykappa} below. The paper is organized as follows.  

In \cref{sec:perturbations}, we introduce the notation and functional framework used throughout the paper. In particular, we write the equation satisfied by the perturbations $\eta^\pm$ of the first excited state: 
\begin{align*}
    \omega^{\pm}(t,\theta) &\coloneqq -\sin(2\theta) + \left(\eta^{+}(t,\theta) \pm \eta^{-}(t,\theta)\right), && t >0, \ \theta \in \T, \\
    u^{\pm}(t,\theta) &\coloneqq \frac{1}{2}\sin(2\theta) + \left(v^{+}(t,\theta) \pm v^{-}(t,\theta)\right), && t >0, \ \theta \in \T.
\end{align*}

We state the main results in \cref{sec:main}: global well-posedness for the linearized equations \crefrange{the:existence}{the:existence2}); instability for the linearized problem (\cref{the:instability_linearized}); local well-posedness for the nonlinear problem (\cref{le:nonlinear existence}); instability (\cref{the:instability_nonlinear}) or global well-posedness and stability (\cref{the:stability_nonlinear}) for the nonlinear problem. 

In \cref{sec:existence-linearized}, we prove \crefrange{the:existence}{the:existence2} by relying on Galerkin’s method using the orthonormal basis \(\{e_{2,k}\}_{k \geq 1}\) of the Hilbert space $\mathcal H_2$ introduced in \cref{ssec:spaces}. 

In \cref{sec:instability-linearized}, we prove \cref{the:instability_linearized}. The key observation is that the first linearized equation concerning \(\eta^{+}\) is responsible for the instability: indeed, the structure of the associated linear operator is the same as that of the De Gregorio model \cref{eq:DG1} studied in \cite{guo2025}. To avoid overestimation in the linear stability analysis, we expand the perturbation with respect to an orthonormal basis in the weighted \(L^{2}\)-norm defined in \cref{ssec:spaces}. 
	
In \cref{sec:stability-nonlinear}, we first establish the local-posedness of the nonlinear system: this is based on the linear analysis in \cref{sec:existence-linearized}, the main challenge being to obtain uniform estimates for the nonlinear terms. Moreover, since their derivatives exhibit stronger singularities, 
we are able to establish the nonlinear instability result (under the Lipschitz structure) in \cref{the:instability_nonlinear}.

In \cref{sec:instability-nonlinear}, we prove the nonlinear stability result \cref{the:stability_nonlinear}. In particular, we notice that the linearized equation for \(\eta^{+}\) exhibits exponential decay for initial data of the form \(\eta_{0}^{+} = \sum_{k\geq 1}\eta_{2,2k}^{+} (0) e_{2,2k}\), and the  equation for \(\eta^{-}\) also exhibits exponential decay for initial data \(\eta^{-}_{0} \in \mathcal{H}_{2}\). Combining these linear stability results with a continuity argument, we establish the global well-posedness of system \cref{eq:linearized}.

Finally, in \cref{app:tech}, we collect some preliminary lemmas used throughout the paper.

\section{Perturbations around the first excited state}
\label{sec:perturbations}

\subsection{Equations of the perturbations}
\label{ssec:perturbations}

In analyzing the behavior of \(\omega^{\pm}\) near the first excited state \(\Omega_2^\pm (\theta) \coloneqq -\sin(2\theta)\), we rewrite \crefrange{eq:MHD1}{eq:MHDu}, with parameters as in \cref{ass:parameters}, in terms the variables \(\eta^{\pm} (t,\theta)\) defined by
\begin{align*}
    \omega^{\pm}(t,\theta) &\coloneqq -\sin(2\theta) + \left(\eta^{+}(t,\theta) \pm \eta^{-}(t,\theta)\right), && t >0, \ \theta \in \T, \\
    u^{\pm}(t,\theta) &\coloneqq \frac{1}{2}\sin(2\theta) + \left(v^{+}(t,\theta) \pm v^{-}(t,\theta)\right), && t >0, \ \theta \in \T, 
\end{align*}
with \(v^{\pm}(t,0) = 0\). Then \crefrange{eq:MHD1}{eq:MHDu} can be rewritten as a partial differential equation in \(\eta^{\pm}\) as follows: 
\begin{align}\label{eq:perturbations}
\begin{cases}
\partial_t \eta^{+}  = \{\frac{1}{2} \eta^{+} + v^{+}, \sin(2\theta)\} +  N_{1}, & t >0, \ \theta \in \T, \\
\partial_t \eta^{-}  = \{\frac{1}{2} \eta^{-} - v^{-}, \sin(2\theta)\} - 2q\left(\sin(2\theta) H\eta^{-} + \cos(2\theta) \eta^{-}\right) +  N_{2}, & t >0, \ \theta \in \T,\\
\partial_{\theta}v^{\pm}  = H\eta^{\pm}, & t >0, \ \theta \in \T,
\end{cases} 
\end{align}
where \(N_{1}\) and \(N_{2}\) are nonlinear terms defined as
\begin{align}\label{def:nonlinear1}
     N_{1} &\coloneqq \{\eta^{+}, v^{+}\} - \{\eta^{-}, v^{-}\},\\
     N_{2} &\coloneqq \{\eta^{-}, v^{+}\} - \{\eta^{+}, v^{-}\} - 2q\left(\eta^{-} H \eta^{+} - \eta^{+} H \eta^{-}\right), \label{def:nonlinear2}
\end{align}
with \(\{\cdot, \cdot\}\) denoting the \emph{Lie bracket} \(\{f, g\} \coloneqq f\partial_{\theta}g - g\partial_{\theta}f\). 

To simplify the notation, we define the linear operators 
    \begin{align}\label{def:operator1}
        L f \coloneqq \{f, \sin(2\theta)\},\qquad A f \coloneqq \{v(f), \sin(2\theta)\},\qquad Q f \coloneqq \sin(2\theta) H\eta^{-} + \cos(2\theta) \eta^{-},
    \end{align}
    where \(v(f)\) satisfies \(\partial_{\theta} v = H f\) with \(v(0) = 0\), and let  
\begin{align} \label{def:operator2}
L^{+} \coloneqq L + A, \qquad L^{-} \coloneqq L - A - 2\,q\, Q.
\end{align}

With the notations in \crefrange{def:operator1}{def:operator2}, the system \crefrange{eq:perturbations}{def:nonlinear2} takes the form
    \begin{align}\label{eq:perturbations2}
        \begin{cases}
            \partial_{t}\eta^{+} = L^{+} \eta^{+} +  N_{1}, & t >0, \ \theta \in \T, \\
            \partial_{t}\eta^{-} = L^{-} \eta^{-} +  N_{2}, & t >0, \, \theta \in \T.
        \end{cases}
    \end{align}  

    The linear part of the one-dimensional MHD system in \cref{eq:perturbations2}, with the more restrictive parameter choice in \cref{ass:q=0},
    is given by
\begin{align}\label{eq:linearized}
\begin{cases}
    \partial_{t} \eta^{+} = L^{+} \eta^{+} = \left\{\frac{1}{2} \eta^{+} + v^{+}, \sin(2\theta)\right\}, & t >0, \ \theta \in \T, \\
    \partial_{t} \eta^{-} = L^{-} \eta^{-} = \left\{\frac{1}{2} \eta^{-} - v^{-}, \sin(2\theta)\right\}, & t >0, \ \theta \in \T.
\end{cases}
\end{align}

\subsection{Function spaces and notation}
\label{ssec:spaces}

We analyze the one-dimensional MHD in the following function space inspired by \cite{LLR2020,guo2025}:
\begin{align*}
    \mathcal{H}_{2} \coloneqq  \left\{\eta \in H^{1} (\mathbb{T}) : \ \eta \text{ is odd } \quad \text{and} \quad   \int_{\mathbb{T}} \frac{|\partial_{\theta}\eta|^{2}}{|\sin(\theta)|^{2}} \, \mathrm d \theta < \infty\right\},
\end{align*}
which is a Hilbert space with inner-product defined as
\begin{equation}
\langle \xi, \eta \rangle_{\mathcal H_2} \coloneqq \int_{-\pi}^{\pi} \rho_2 \, \partial_\theta \xi \, \partial_\theta \eta \, \mathrm d\theta, \qquad \text{with } \ \rho_2 \coloneqq \frac{1}{4\pi \sin^2 \theta}
\end{equation}
(see \cite[Eq.~(1.13)--(1.14)]{guo2025}).
The family \(\{e_{2,k}\}_{k \geq 1}\), with 
\begin{align*}
    e_{2,k} \coloneqq \frac{\sin((k+2)\theta)}{k+2} - \frac{\sin(k\theta)}{k}, \qquad k \geq 1,
\end{align*}
is a complete orthonormal basis for $\mathcal H_2$ (see \cite[Lemma 2.1]{guo2025}).

    The norm in the Sobolev space \(\mathcal H_2\) can thus be written as
	\begin{align*}
		\| \eta \|_{\mathcal H_2} \coloneqq \left( \int_{-\pi}^{\pi} \rho_2 \, | \partial_{\theta} \eta|^{2} \, \mathrm d\theta \right)^{1/2}  = \| \rho_2^{1/2}\, \partial_{\theta} \eta\|_{L^2}.
	\end{align*}

We denote
	\begin{align*}
		 L^{2} (\T) &\coloneqq \left\{ f: [-\pi,\pi] \to \R\,: \  f\in L^{2} (- \pi, \pi),  \ f \ \text{is periodic on}\ [- \pi, \pi]\right\},\\
		 H^{m} (\T) &\coloneqq \left\{f: [-\pi,\pi] \to \R\,: \ f^{(k)} \in L^{2} (- \pi, \pi), \ f^{(k)}\ \text{is periodic on}\ [- \pi, \pi],\ \text{for all}\ 0 \leq k \leq m \right\},
	\end{align*}
	where \(f^{(k)}\) denotes the \(k\)-th order derivative of \(f\).
	Throughout this paper, the norms \(\|\cdot\|_{L^{p}}\), \(\|\cdot\|_{H^{m}}\) and \(\|\cdot\|_{L^{\infty}}\) refers to those in \(L^{p} (\T)\), \(H^{m} (\T)\), and \(L^{\infty}(\T) \), respectively. We also denote by \(\left\langle \cdot, \cdot \right\rangle\) the standard inner product in \( L^{2} (\T) \), i.\,e.,
	\begin{align*}
		\left\langle f, g\right\rangle \coloneqq \int_{- \pi}^{\pi} f g \, \mathrm d\theta.
	\end{align*}

Finally, throughout the paper, we use \(C\) for an absolute constant and \(C(A, B, \ldots, Z)\) for constants depending on the parameters \(A, B, \ldots, Z\). These constants may vary from line to line unless otherwise specified. The notation \(A \lesssim B\) indicates that \(A \leq C B\) for some positive constant \(C\), and \(A \sim B\) means that both \(A \lesssim B\) and \(B \lesssim A\) hold, with implicit constants that may also vary from line to line.

\section{Main results}
\label{sec:main}

Our first result establishes the global existence and uniqueness of solutions to \cref{eq:linearized} with odd initial data.

\begin{theorem}[Global existence of solutions to the linearized problem] \label{the:existence}
    Let \(\rho_2^{1/2} \partial_{\theta} \eta^{\pm}_{0} \in H^{m}(\mathbb T)\) for some integer \(m > 3\). Then, for any \(T > 0\), there exists a unique classical odd solution \(\eta^{\pm} \) to \cref{eq:linearized} with initial condition \(\eta^{\pm} (0, \cdot) = \eta^{\pm}_{0} (\cdot)\), satisfying 
    \begin{align*}
        \rho_2^{1/2}\, \partial_{\theta} \eta^{\pm} (t,\theta) \in C([0, T]; H^{m}(\T)) \cap C^{2}([0, T]; H^{m - 2}(\T)).
    \end{align*}
\end{theorem}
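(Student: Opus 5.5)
The two equations in \cref{eq:linearized} are decoupled. Moreover, $L^-$ is obtained from $L^+$ by replacing $v$ with $-v$, so I would treat both at once through the family $L_\sigma f \coloneqq \{\tfrac12 f + \sigma v(f), \sin(2\theta)\}$ with $\sigma = \pm 1$. The plan is a Galerkin scheme in the orthonormal basis $\{e_{2,k}\}_{k\ge1}$ of $\mathcal H_2$, followed by uniform a priori estimates on $g \coloneqq \rho_2^{1/2}\partial_\theta \eta$ in $H^m$, passage to the limit, and uniqueness by an energy estimate.

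\textbf{Step 1 (Galerkin system).} I would first compute the action of $L_\sigma$ on the basis. Using $H\sin(k\theta) = -\cos(k\theta)$, we get $v(\sin k\theta) = -\sin(k\theta)/k$. The bracket with $\sin 2\theta$ shifts frequencies by $\pm 2$, so $L_\sigma e_{2,k}$ is a finite combination of $\sin((k+j)\theta)$ with $|j|\le 4$, and it is again odd. Expressing this in the basis should give a banded, tridiagonal-type matrix, coupling $e_{2,k}$ only with $e_{2,k\pm2}$ and itself, with entries growing linearly in $k$; this matches the structure used in \cite{guo2025} for De Gregorio. Truncating to $k\le N$ gives a linear ODE system for the coefficients, which is solvable globally by linear ODE theory.

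\textbf{Step 2 (uniform estimates, the main obstacle).} I need bounds on $\|\partial_\theta^j g_N\|_{L^2}$ for $j\le m$, uniform in $N$ on $[0,T]$. Writing $\eta = \sum_k c_k e_{2,k}$, the $\mathcal H_2$ norm equals $\sum_k c_k^2$, and higher derivatives of $g$ should be comparable to weighted sums $\sum_k k^{2j}c_k^2$, up to lower-order terms. The key is that the principal part $\tfrac12\{f,\sin 2\theta\}$ is a transport operator $\tfrac12(\sin 2\theta\,\partial_\theta - 2\cos 2\theta)$. In the weighted energy $\sum k^{2j} c_k^2$, the leading order antisymmetric parts of the band matrix should cancel, so that
\[
\tfrac{\mathrm d}{\mathrm dt}\sum_k k^{2j}c_k^2 \le C(j)\sum_k k^{2j}c_k^2 .
\]
Concretely, for the off-diagonal entries $a_{k,k+2}$ and $a_{k+2,k}$ (both $O(k)$), I expect $k^{2j}a_{k,k+2} + (k+2)^{2j}a_{k+2,k} = O(k^{2j})$. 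I would verify this from the explicit coefficients. This is where the real difficulty lies, since any lack of cancellation at top order would destroy the estimate. The nonlocal term $\sigma v(f)$ is of lower order, because $v$ gains one derivative, so it contributes $O(1)$ entries and is harmless. Alternatively, one could work directly in physical space with $g$, commuting $\partial_\theta^j$ with the transport term and integrating by parts. Then Gronwall gives the uniform bound $e^{CT}\|g_0\|_{H^m}$. Because the Galerkin projection is orthogonal in $\mathcal H_2$ but not in the higher norms, I would prefer the coefficient formulation, where truncation commutes with the diagonal weights.

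\textbf{Step 3 (limit, regularity, uniqueness).} Uniform bounds in $C([0,T];H^m)$ for $g_N$ give, via the equation, bounds on $\partial_t g_N$ in $H^{m-1}$ and on $\partial_t^2 g_N$ in $H^{m-2}$. Aubin--Lions and weak-$*$ compactness then yield a limit solving \cref{eq:linearized}. Strong continuity in time follows from the energy equality, or by approximating the data and using linearity together with the energy bound. Since $m>3$, Sobolev embedding makes $\eta$ and its time derivatives classical. Oddness is preserved because each $e_{2,k}$ is odd. For uniqueness, I would apply the $\mathcal H_2$ ($j=0$) energy estimate to the difference of two solutions, which starts from zero, and conclude with Gronwall.
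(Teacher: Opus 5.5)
Your proposal is correct and follows the paper's route: a Galerkin scheme in $\{e_{2,k}\}_{k\ge 1}$, energy estimates from weighting the coefficient equations with $(k+1)^{2j}$, bounds on $\partial_t g_N$ and $\partial_t^2 g_N$ read off from the equation, a weak limit, and uniqueness by Gr\"onwall in $\mathcal H_2$. The only variation is that you close the top-order estimate directly in coefficient space, whereas the paper returns to physical space and integrates by parts in $\langle \partial_\theta^m L_1^{\pm}u_n,\partial_\theta^m u_n\rangle$; your cancellation does hold, since the off-diagonal entries satisfy $M_{k,k+2}=d^{\pm}_{2,k+2}=-M_{k+2,k}$, so with $w_k=(k+1)^{2j}$ the cross terms carry $(w_k-w_{k+2})\,d^{\pm}_{2,k+2}=O(k^{2j})$. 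Note also that $\rho_2^{1/2}\partial_\theta e_{2,k}$ is a constant multiple of $\sin((k+1)\theta)$, so your weighted sums equal $\|\partial_\theta^j g_N\|_{L^2}^2$ exactly and the Galerkin projection is just Fourier truncation of $g$, which removes your concern about orthogonality in higher norms.
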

To prove \cref{the:existence},  we employ Galerkin's method through the basis \( \{ e_{2,k}\}_{k \geq 1} \) (introduced in \cref{ssec:spaces}) and construct the approximate solutions 
\[ \eta_{n}^{\pm} (t, \theta) = \sum_{k=1}^{n} \eta_{2,k}^{\pm} (t) e_{2,k}. \]
During the proof, we also obtain uniform estimates for the Fourier coefficients of the solutions as a byproduct (see \crefrange{eq:decay-coeff}{eq:decay-coeff-ddt}), which guarantee the rigor of our subsequent arguments. 

For more general initial data, we can construct approximate solutions using the general basis \(\{\sin(k\theta)\}_{k \geq 1} \cup \{\cos(k\theta)\}_{k \geq 0}\). We can also establish the following result (whose proof is analogous to that of \cref{the:existence} and is therefore omitted).

\begin{theorem}[Global well-posedness of the  linearized problem] \label{the:existence2}
Let us assume that \( \eta_{0}^{\pm} \in H^{m}\) with \(m>3\)  an integer.  Then, for any \(T > 0\), there exits a unique classical solution \( \eta^{\pm} \) to \cref{eq:linearized} with initial condition \(\eta^{\pm}(0,\cdot) = \eta_{0}^{\pm} (\cdot) \), satisfying 
		\begin{align*}
			\eta^{\pm} (t,\theta) \in C([0,T];H^{m})\cap C^{2}([0,T];H^{m-2}). 
		\end{align*}
\end{theorem}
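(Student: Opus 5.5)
Since \cref{eq:linearized} decouples, it suffices to treat the two scalar equations
\[
\partial_t \eta^{\pm} = \left\{\tfrac12 \eta^{\pm} \pm v^{\pm}, \sin(2\theta)\right\}, \qquad \partial_\theta v^{\pm} = H\eta^{\pm}, \quad v^{\pm}(0)=0,
\]
separately. For each we run a Galerkin scheme in the trigonometric basis $\{\sin(k\theta)\}_{k\ge1}\cup\{\cos(k\theta)\}_{k\ge0}$, take a priori $H^m$ bounds, pass to the limit, and prove uniqueness.

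\emph{Structure of the operator.} Write $\{f,\sin 2\theta\} = 2\cos(2\theta) f - \sin(2\theta)\,\partial_\theta f$. The equation is then a linear transport equation with smooth velocity $\sin(2\theta)$, plus the zeroth-order terms
\[
\cos(2\theta)\,\eta \pm \bigl(2\cos(2\theta)\, v - \sin(2\theta)\, H\eta\bigr).
\]
Since $v$ is one derivative smoother than $\eta$ and $H$ is bounded on every $H^k$, the right-hand side is $-\sin(2\theta)\partial_\theta\eta + \mathcal B\eta$ with $\|\mathcal B\eta\|_{H^k}\lesssim_k \|\eta\|_{H^k}$. One point needs care: $v(\theta)=\int_0^\theta H\eta$ is periodic only when $\widehat{H\eta}(0)=0$, and this holds automatically because the Hilbert transform kills constants. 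Thus $\|v\|_{H^{k+1}}\lesssim\|\eta\|_{H^k}$, with the normalisation $v(0)=0$ controlled by $\|H\eta\|_{L^1}$. Multiplication by $\sin 2\theta$ and $\cos 2\theta$ shifts Fourier modes by $\pm2$, so I use the projections $P_n$ onto modes $|k|\le n$ and solve the finite-dimensional linear ODE
\[
\partial_t\eta_n = P_n L^{\pm}\eta_n, \qquad \eta_n(0)=P_n\eta_0^{\pm},
\]
which has a global solution.

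\emph{Uniform estimates.} These are the main step. For $0\le k\le m$, pair $\partial_\theta^k$ of the equation with $\partial_\theta^k\eta_n$ in $L^2$. Since $P_n$ is self-adjoint and commutes with $\partial_\theta$, and $\partial_\theta^k\eta_n$ lies in its range, the projection drops out. The top-order transport term is handled by integration by parts:
\[
-\int \sin(2\theta)\,\partial_\theta^{k+1}\eta_n\,\partial_\theta^k\eta_n = \int \cos(2\theta)\,|\partial_\theta^k\eta_n|^2 .
\]
The commutator $[\partial_\theta^k,\sin 2\theta]\partial_\theta$ contains at most $k$ derivatives of $\eta_n$. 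All remaining terms are bounded by $\|\eta_n\|_{H^k}^2$, using that $H$ is bounded on $L^2$ and the bound on $v$ above. Gronwall then gives $\|\eta_n(t)\|_{H^m}\le e^{C_m t}\|\eta_0\|_{H^m}$ uniformly in $n$.

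\emph{Time derivatives.} From the equation, $\|\partial_t\eta_n\|_{H^{m-1}}\lesssim\|\eta_n\|_{H^m}$, and differentiating once more in time gives $\|\partial_t^2\eta_n\|_{H^{m-2}}\lesssim\|\eta_n\|_{H^m}$.

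\emph{Passage to the limit and regularity.} By Aubin--Lions and the weak-$*$ compactness in $L^\infty(0,T;H^m)$, a subsequence converges in $C([0,T];H^{m-1})$ to a solution $\eta$. Because $m>3$, Sobolev embedding makes $\eta$, $\partial_t\eta$ and $\partial_\theta\eta$ continuous, so $\eta$ is a classical solution. Continuity of $\eta$ into $H^m$ itself, and not only weak continuity, follows by the standard argument: the energy identity is time-reversible, so $\|\eta(t)\|_{H^m}$ is continuous in $t$, and weak continuity plus continuity of the norm gives strong continuity. One can also mollify the data and use the linear stability estimate. The $C^2([0,T];H^{m-2})$ regularity is then read off from the equation and its time derivative.

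\emph{Uniqueness.} The difference of two solutions solves the same linear equation with zero data, so the $k=0$ energy estimate and Gronwall show that it vanishes. I expect the only delicate point to be verifying that the Galerkin truncation does not destroy the cancellation in the transport term. It does not, precisely because of the self-adjointness of $P_n$ noted above.
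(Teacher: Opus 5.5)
Your proposal is correct and matches the paper's argument. The paper omits the proof, saying only that it is analogous to \cref{the:existence} with the trigonometric basis $\{\sin(k\theta)\}_{k\ge1}\cup\{\cos(k\theta)\}_{k\ge0}$; that is precisely your Galerkin scheme with $H^m$ energy estimates, compactness, and Gr\"onwall uniqueness. Your derivation of strong continuity into $H^m$ (mollified data plus the linear $H^m$ stability estimate) is more careful than the paper's corresponding step, and there is one cosmetic slip: the transport velocity is $\tfrac12\sin(2\theta)$, not $\sin(2\theta)$, which changes nothing.
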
 

Having established the global existence result, we turn to the study of the  instability of the linearized model \cref{eq:linearized}. More precisely, we show that  \(\| \eta^{+} (t) \|_{\mathcal{H}_{2}} + \| \eta^{-} (t) \|_{\mathcal{H}_{2}}\) grows exponentially for a suitable class of initial data.

\begin{theorem}[Instability for the linearized problem]\label{the:instability_linearized}
    Let us assume that the conditions in \cref{the:existence} are satisfied and that the initial data satisfies 
    \[\|\eta^{+}_{0}\|_{\mathcal{H}_{2}}\neq 0 \qquad  \text{ and } \qquad  \left\langle L^{+} \eta_{0}^{+}, \eta_{0}^{+} \right\rangle_{\mathcal{H}_{2}} \geq 0.
    \]
    Then there exists an absolute constants \( \lambda_{1} > 0 \) such that the corresponding solution of \cref{eq:linearized} given in \cref{the:existence} satisfies the following lower bound:
    \begin{align}\label{eq:est32}
        \| \eta^{+} (t) \|_{\mathcal{H}_{2}} + \| \eta^{-} (t) \|_{\mathcal{H}_{2}} > E_{1}^{1/2} (t) > 0, \qquad \text{for all $t >0$},
    \end{align}
 where
    \begin{align}\label{eq:est32b}
    \begin{aligned}
        E_{1} (t) &\coloneqq \frac{ \left\langle \eta_{0}^{+}, \eta_{0}^{+} \right\rangle_{\mathcal{H}_{2}} + \tfrac{1}{\sqrt{ \lambda_{1}}} \left\langle L^{+} \eta_{0}^{+}, \eta_{0}^{+} \right\rangle_{\mathcal{H}_{2}} }{2} \,e^{2 \sqrt{ \lambda_{1} } t } \\ &\qquad + \frac{\left\langle \eta_{0}^{+}, \eta_{0}^{+} \right\rangle_{\mathcal{H}_{2}} - \tfrac{1}{\sqrt{ \lambda_{1}}} \left\langle L^{+} \eta_{0}^{+}, \eta_{0}^{+} \right\rangle_{\mathcal{H}_{2}}}{2} \,e^{-{2 \sqrt{\lambda_{1} }t}}.
    \end{aligned}
    \end{align}
\end{theorem}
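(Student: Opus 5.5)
Since the linearized system \cref{eq:linearized} is decoupled, the lower bound $\|\eta^{+}(t)\|_{\mathcal H_2}+\|\eta^{-}(t)\|_{\mathcal H_2}\ge \|\eta^{+}(t)\|_{\mathcal H_2}$ reduces the statement to the $\eta^+$-equation $\partial_t\eta^+=L^+\eta^+$. The operator $L^+ = L + A$ with $\{\tfrac12\eta^++v^+,\sin 2\theta\}$ is exactly the linearized De Gregorio operator around $-\sin(2\theta)$ studied in \cite{guo2025}. The plan is therefore to transplant the energy/ODE argument of that paper, using the coefficient decay bounds from the proof of \cref{the:existence} to justify all manipulations.

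The key steps are as follows. Set $E(t)\coloneqq\langle\eta^+,\eta^+\rangle_{\mathcal H_2}$. First, $E'(t)=2\langle L^+\eta^+,\eta^+\rangle_{\mathcal H_2}$, and I would split $L^+=S+K$ into its $\mathcal H_2$-symmetric and antisymmetric parts, expanding in the orthonormal basis $\{e_{2,k}\}$. Here $L^+e_{2,k}$ has a banded (finite-band) action on indices $k\pm2$, computed via \cref{le:Heo} and the Hilbert transform of sines. Differentiating again gives
\[
E''(t)=2\langle (L^+)^2\eta^+,\eta^+\rangle_{\mathcal H_2}+2\langle L^+\eta^+,L^+\eta^+\rangle_{\mathcal H_2}.
\]
The aim is to show, following \cite{guo2025}, that the antisymmetric part drops out in a suitable way, so that
\[
E''(t)=4\|S\eta^+\|^2_{\mathcal H_2}+(\text{terms}) \ \ge\ 4\lambda_1 E(t)
\]
for an absolute constant $\lambda_1>0$. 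In \cite{guo2025} this reduces to a second-order ODE inequality for the Fourier coefficients and to a positive-definite quadratic form whose smallest eigenvalue yields $\lambda_1$. Because the structure of $L^+$ is identical to the De Gregorio case, I would invoke that spectral bound directly, verifying only that the tridiagonal coefficients coincide.

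Next comes the comparison argument. From $E''\ge4\lambda_1E$ with $E(0)=\|\eta_0^+\|^2_{\mathcal H_2}>0$ and $E'(0)=2\langle L^+\eta_0^+,\eta_0^+\rangle_{\mathcal H_2}\ge0$, let $E_1$ be the solution of $E_1''=4\lambda_1E_1$ with the same initial data. This is exactly \eqref{eq:est32b}, since $E_1(0)=E(0)$ and $E_1'(0)=E'(0)$. Applying a Sturm-type argument to $w\coloneqq E-E_1$, with $w''\ge4\lambda_1 w$ and $w(0)=w'(0)=0$, gives $w\ge0$. To get the strict inequality, I would show that $E''>4\lambda_1E$ at some instant (or use strict positivity of the quadratic form on nonzero data), which yields $w>0$ for $t>0$. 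Finally, $E_1>0$ follows from the nonnegativity of both coefficients together with $E(0)>0$, and taking square roots gives \eqref{eq:est32}.

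The main obstacle I expect is establishing the convexity inequality $E''\ge4\lambda_1E$. One has to compute $L^+e_{2,k}$ exactly in the weighted basis and control the cross term from the nonsymmetric part of $L^+$, identifying the resulting infinite quadratic form and bounding it below uniformly. I would first try to reduce literally to the computation in \cite{guo2025}; if the normalization differs, I would recompute the band coefficients and redo the lower bound on the associated Jacobi-type matrix. Justifying termwise differentiation of the series relies on the decay estimates \crefrange{eq:decay-coeff}{eq:decay-coeff-ddt}.
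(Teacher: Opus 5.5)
Your proposal is correct and follows essentially the same route as the paper: reduce to $\eta^{+}$, expand in $\{e_{2,k}\}$, and note that the antisymmetric band part of $L^{+}$ cancels in $\frac{\mathrm d}{\mathrm dt}\|\eta^{+}\|_{\mathcal H_2}^2$ but leaves the cross terms $\sum_k \varepsilon_k^{+}\eta_{2,k}^{+}\eta_{2,k+2}^{+}$ in the second derivative; then bound the resulting form below using the uniformly positive-definite $2\times 2$ blocks of \cref{le:lambda}, and compare with $y''=4\lambda_1 y$ (your Duhamel/Sturm argument for $w=E-E_1$ is simply a more elementary replacement for the cited comparison lemma). One small fix: the coefficient of $e^{-2\sqrt{\lambda_1}t}$ in $E_1$ can be negative (e.g.\ for $\eta_0^{+}=e_{2,1}$, since $\lambda_1<a_2^{+}=9/64$ gives $\sqrt{\lambda_1}<3/8<11/18$), so you should prove $E_1>0$ instead from $E_1(t)=\|\eta_0^{+}\|_{\mathcal H_2}^2\cosh(2\sqrt{\lambda_1}t)+\lambda_1^{-1/2}\langle L^{+}\eta_0^{+},\eta_0^{+}\rangle_{\mathcal H_2}\sinh(2\sqrt{\lambda_1}t)$.
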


\begin{remark}[The constant $\lambda_1$ in \cref{the:instability_linearized}]\label{re:instability_linearized_remark}
    The constant \( \lambda_{1} \) in the statement of \cref{the:instability_linearized} is a positive and absolute constant satisfying \( \frac{1}{50}< \lambda_{1} < \frac{3}{5} \); see \cref{le:lambda}  for more details.
\end{remark}

\begin{remark}[Initial data satisfying the assumptions of \cref{the:instability_linearized}]
    There exist various initial data satisfying the assumptions in \cref{the:instability_linearized}. Let us suppose that \( \eta_{0}^{-}  \) fulfills \( \rho_2^{1/2}\, \partial_{\theta} \eta_{0}^{-} \in H^{m}(\T) \) for some integer \( m > 3\) and that either of the following conditions holds:
\begin{enumerate}[label=(\roman*)]
            \item \(\eta_{0}^{+} (\theta) = \eta_{2,1}^{+} (0) e_{2,1}\) with \(a_{1} \neq 0\);
            \item \(\eta_{0}^{+} (\theta) = \eta_{2,1}^{+} (0) e_{2,1} + \eta_{2,k}^{+} (0) e_{2,k} \) for some integer \( k \geq 2\), and
    \begin{align*}
        0 < \big(\eta_{2,k}^{+} (0)\big)^{2} \leq \frac{11}{18( d_{2,k+2}^{+} - d_{2,k}^{+} )} \big(\eta_{2,1}^{+} (0)\big)^{2}.
    \end{align*}
\end{enumerate}
Then the solution obtained in \cref{the:existence} satisfies \cref{eq:est32}.
\end{remark}

Let us briefly comment on the strategy of the proof of \cref{the:instability_linearized}. We start by expanding \(\eta^{\pm} (t,\theta)\) in orthonormal basis \(\{e_{2,k}\}_{k\ge 1}\) of the space \(\mathcal{H}_{2}\). Using this basis, \(\left\langle L^{\pm} \eta^{\pm}, \eta^{\pm}\right\rangle_{\mathcal{H}_{2}}\) formally reduces to a sum involving only squared coefficients
\begin{align*}
    \frac{1}{2} \frac{\mathrm{d}}{\mathrm{d} t} \|\eta^{\pm} (t)\|_{\mathcal{H}_{2}}^{2} = \left\langle L^{\pm} \eta^{\pm}, \eta^{\pm}\right\rangle_{\mathcal{H}_{2}} = \sum_{k\geq 1} c_{k}^{\pm} \left(\eta^{\pm}_{2,k} (t)\right)^{2}
\end{align*}
without any cross product terms between the coefficients (e.\,g., \(\eta^{\pm}_{2,k} (t) \eta^{\pm}_{2,k+2} (t)\)), where \(c_{k}^{\pm}\) are constants depending on \(k\). 

If there exists a constant \(C > 0\), such that \(c_{k}^{\pm} \leq - C\) for all \(k \geq 1\), then we obtain 
\begin{align*}
    \frac{1}{2} \frac{\mathrm{d}}{\mathrm{d} t} \|\eta^{\pm} (t)\|_{\mathcal{H}_{2}}^{2} \leq - C \|\eta^{\pm} (t)\|_{\mathcal{H}_{2}}^{2}.
\end{align*}
Regarding \(L^{-}\), there exists \(C > 0\) (see \cref{ssec:lineareta-}) such that \(c^{-}_{k} < - C\) for all \(k \geq 1\). In contrast, for \(L^{+} \), the coefficients \(c_{k}^{+}\) change signs (see \cref{ssec:lineareta+}): more specifically, a computation yields
\begin{align}\label{eq:ODE eta positive}
    \frac{1}{2} \frac{\mathrm{d}}{\mathrm{d} t} \sum_{k \geq 1} \left(\eta_{2,k}^{+} (t)\right)^{2} = \left\langle L^{+} \eta^{+}, \eta^{+} \right\rangle_{\mathcal{H}_{2}} = \sum_{k\geq 1} \left(d_{2,k}^{+} - d_{2,k+2}^{+} \right) \left( \eta^{+}_{2,k} (t)\right)^{2},
\end{align}
where
\begin{align*}
    d_{2,k}^{+} - d_{2,k+2}^{+} = - \frac{1}{2} - \frac{2 k^{2} - 4 k - 8}{(k + 2)^{2} k^{2}}, \qquad \text{for } k = 1,\, 2, \ldots.
\end{align*}
For \(k = 1\), we have \( d_{2,1}^{+} - d_{2,3}^{+} = \frac{11}{18} > 0\) and, for \(k \geq 2\), we have \( d_{2,k}^{+} - d_{2,k+2}^{+} < 0\). This suggests that the  linearized equation concerning \(\eta^{+}\) is responsible for the instability. To actually show that this is the case, we study a second-order ordinary differential equation (ODE) derived from \cref{eq:ODE eta positive}, which can be written as
\begin{align*}
        \frac12\frac{\mathrm{d}^2}{\mathrm{d} t^2}\sum_{k \geq 1} \left(\eta_{2,k}^{+} (t)\right)^{2} = \sum_{k\geq 1}  \frac{\mathrm{d}}{\mathrm{d} t} \left(d_{2,k}^{+} - d_{2,k+2}^{+} \right) \left( \eta^{+}_{2,k} (t)\right)^{2}.
\end{align*}
Similarly to the approach in \cite{guo2025}, we consider the following splitting:
\begin{align}\label{eq:part sum}
      \begin{aligned}
          S_{n}^{+} (t) & \coloneqq \sum_{k=1}^{n} \frac{\mathrm d}{\mathrm dt} \left( d_{2,k}^{+} - d_{2,k+2}^{+} \right) \left(\eta_{2,k}^{+} (t) \right)^{2}\\
      & = \left( d_{2,1}^{+} - d_{2,3}^{+} \right)^{2} \left( \eta_{2, 1}^{+} (t) \right)^{2}  + \left( d_{2,2}^{+} - d_{2,4}^{+} \right)^{2} \left(\eta_{2, 2}^{+} (t) \right)^{2} + \sum_{k=1}^{n-2} Q_{k} (t)  \\
      & \quad + \left( d_{2,n-1}^{+} - d_{2,n+1}^{+} \right)^{2} \left( \eta_{2, n-1}^{+} (t) \right)^{2}  + \left( d_{2,n} - d_{2,n+2}^{+} \right)^{2} \left( \eta_{2,n}^{+} (t) \right)^{2} \\
      & \quad + R_{n-1} (t) + R_{n} (t),
      \end{aligned}
  \end{align}
  where, the quadratic form \( Q_{k} (t) \) is defined as
  \begin{align*}
      Q_{k}(t) & \coloneqq \left( d_{2,k}^{+} - d_{2,k+2}^{+} \right)^{2} \left( \eta_{2, k}^{+} (t) \right)^{2} + \left( d_{2,k+2}^{+} - d_{2,k+4}^{+} \right)^{2} \left( \eta_{2, k+2}^{+} (t) \right)^{2}\\
      & \quad + 2 \left( d_{2,k}^{+} d_{2,k+2}^{+} + d_{2,k+2}^{+} d_{2,k+4}^{+} - 2 (d_{2,k+2}^{+})^{2} \right ) \eta_{2, k}^{+} (t) \eta_{2, k+2}^{+} (t), \quad \text{for \( k = 1,\, 2, \ldots,\, n - 2\),}
  \end{align*}
   and remainder term \( R_{k} (t) \) is given by
  \begin{align*}
      R_{k}(t) \coloneqq 2 d_{2,k+2}^{+} \left( d_{2,k}^{+} - d_{2,k+2}^{+} \right) \eta_{2,k}^{+} (t) \eta_{2, k+2}^{+} (t), \quad \text{for \( k = n - 1,\, n\).}
  \end{align*}
  Each quadratic form \( Q_{k} (t)\) is positive definite and admits uniform lower and upper bounds independent of \(k\ge 1\); moreover, by applying decay estimates to the remainder terms and letting \(n\to \infty\) in \cref{eq:part sum}, we obtain
	\begin{align}\label{eq:soo}
		4 \lambda_{1} \|\eta^{+} (t)\|_{\mathcal{H}_{2}}^{2} < \frac{\mathrm{d}^{2}}{\mathrm{d} t^{2}} \|\eta^{+} (t) \|_{\mathcal{H}_{2}}^{2} < 4 \lambda_{2} \|\eta^{+}(t)\|_{\mathcal{H}_{2}}^{2}, \qquad \text{for \( t >0\),}
	\end{align}
	 where \(\lambda_{i} > 0\) (with \(i = 1,\, 2\)) are absolute constants. Thus, the proof of \cref{the:instability_linearized} follows by applying a suitable comparison theorem  (see \cref{le:comparison the}) to the second-order ODE \cref{eq:soo}.
    
We now turn to the nonlinear problem. First, we establish the local well-posedness of the classical solution to \cref{eq:perturbations2} with odd initial data. 

\begin{theorem}[Local well-posedness of the nonlinear problem] \label{le:nonlinear existence}
    Let \(m > 3\) be an integer, and assume that $\eta^\pm_0 : \mathbb T \to \mathbb R$ satisfy \( I_{m}^{2} (0) \coloneqq \| \rho_2^{1/2} \partial_{\theta} \eta^{+}_{0} \|_{H^{m}}^{2} +  \| \rho_2^{1/2} \partial_{\theta} \eta^{-}_{0} \|_{H^{m}}^{2} = \delta^{2} \) for some $0 < \delta < +\infty$. Then there exist \(T_{0} > 0\) and unique classical odd solutions \(\eta^{\pm}\) to \cref{eq:perturbations2} with initial condition \(\eta^{\pm} (0, \cdot) = \eta_{0}^{\pm}\), such that 
    \begin{align*}
        \rho_2^{1/2} \partial_{\theta} \eta^{\pm} \in C ([0,T_{0}]; H^{m} (\T) ) \cap C^{2}([0,T_{0}]; H^{m-2} (\T)).
    \end{align*}
    Moreover, define \(u^{\pm} (t,\theta) \coloneqq \rho_2^{1/2} \partial_{\theta} \eta^{\pm} (t,\theta)\). Then the solutions satisfy 
    \begin{align*}
         \sup_{0 \leq t \leq T} \| u^{\pm} (t)\|_{H^{m}}, \ \sup_{0 \leq t \leq T} \| \partial_{t} u^{\pm} (t)\|_{H^{m}}, \ \sup_{0 \leq t \leq T} \| \partial_{t}^{2} u^{\pm} (t)\|_{H^{m}} \leq C (T_{0}) \delta,
     \end{align*}
     where the existence time \(T_{0}\) can be chosen as
     \[T_{0} = \frac{1}{C_{{1}}} \ln\left(1 + \frac{C_{{1}}}{2C_{{2}} I_{m} (0)}\right),\]
     with \(C_{1}, C_{2} > 0\) denoting absolute constants (cf.~\cref{eq:C12}).
\end{theorem}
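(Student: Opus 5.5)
We construct the local solution by a Galerkin scheme combined with an energy estimate, reusing the linear analysis behind \cref{the:existence}. Expand
\[
\eta_n^\pm(t)=\sum_{k=1}^n \eta_{2,k}^\pm(t)\,e_{2,k}
\]
in the orthonormal basis $\{e_{2,k}\}$ of $\mathcal H_2$. Project \cref{eq:perturbations2} onto $\mathrm{span}\{e_{2,1},\dots,e_{2,n}\}$ in the $\mathcal H_2$ inner product. This gives a finite-dimensional ODE system with quadratic nonlinearity, which has a local solution by Picard's theorem.

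Oddness is preserved. The steady state $\sin(2\theta)$ is odd. For odd $\eta$, the function $H\eta$ is even and $v(\eta)$ is odd. Hence the brackets $\{\text{odd},\text{odd}\}$ are odd, and so are $N_1$, $N_2$ and the linear terms. Consequently the scheme closes in $\mathcal H_2$.

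The core step is a uniform bound on
\[
I_m^2(t)=\|\rho_2^{1/2}\partial_\theta\eta_n^+\|_{H^m}^2+\|\rho_2^{1/2}\partial_\theta\eta_n^-\|_{H^m}^2 .
\]
For the linear parts $L^\pm$, the estimates from the proof of \cref{the:existence} give
\[
\tfrac{d}{dt}\|\rho_2^{1/2}\partial_\theta \eta^\pm\|_{H^m}^2\le C_1\|\rho_2^{1/2}\partial_\theta\eta^\pm\|_{H^m}^2 .
\]
This works because $\{\cdot,\sin 2\theta\}$ is a transport-type operator. Its top-order contribution is $\tfrac12\sin(2\theta)\partial_\theta$ acting on $\eta$, and integration by parts absorbs it. The weight $\rho_2$ costs only lower-order commutators, since $\sin(2\theta)\partial_\theta\rho_2^{1/2}/\rho_2^{1/2}=-2\cos^2\theta$ is smooth.

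For the nonlinear terms, write $\{\eta^\pm,v^\pm\}=\eta^\pm H\eta^\pm - v^\pm\partial_\theta\eta^\pm$. The transport part $v\partial_\theta\eta$ loses a derivative at top order. It is handled by integrating by parts: the principal term becomes $\partial_\theta v=H\eta$ times $|\partial^m(\rho_2^{1/2}\partial_\theta\eta)|^2$, with commutators controlled via Kato--Ponce type estimates. The key ingredient is $\partial_\theta\eta=\rho_2^{-1/2}u=2\sqrt{\pi}\,|\sin\theta|\,u$, where $u=\rho_2^{1/2}\partial_\theta\eta$. Together with Hardy-type inequalities and the vanishing of odd $\eta$ at $0,\pm\pi$, this controls $\|\eta\|_{W^{m,\infty}}$, $\|v\|_{W^{m+1,\infty}}$ and $\|H\eta\|_{H^{m+1}}$ by $I_m$ for $m>3$, using Sobolev embedding and $L^2$-boundedness of $H$. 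The cross terms $\{\eta^-,v^+\}-\{\eta^+,v^-\}$ are treated the same way. Since \cref{ass:q=0} gives $q=0$, the extra product in $N_2$ vanishes. The aim is
\[
\tfrac{d}{dt}I_m\le C_1 I_m + C_2 I_m^2 ,
\]
uniformly in $n$.

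This Riccati inequality integrates explicitly:
\[
I_m(t)\le \frac{C_1 I_m(0)e^{C_1t}}{C_1+C_2I_m(0)(1-e^{C_1t})}.
\]
The bound is finite, and at most $2e^{C_1T_0}\delta$, as long as $e^{C_1t}-1\le C_1/(2C_2I_m(0))$. This is exactly the stated choice of $T_0$.

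Uniform bounds on $\partial_t u_n^\pm$ and $\partial_t^2u_n^\pm$ in $H^{m-1}$ and $H^{m-2}$ follow by differentiating the equation, and are then upgraded to the stated bound. Aubin--Lions compactness then lets us pass to the limit in the nonlinear terms. This yields the regularity $C([0,T_0];H^m)\cap C^2([0,T_0];H^{m-2})$, with continuity in $H^m$ obtained by the standard weak-continuity plus norm-continuity argument.

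Uniqueness comes from an energy estimate for the difference of two solutions at the lower level $\mathcal H_2$ or $H^1$-weighted, which closes by Gronwall since both solutions are bounded in the higher norm.

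The main obstacle I expect is the commutator and weight bookkeeping in the nonlinear energy estimate. Each term of $N_1,N_2$ has to be estimated at top order in the weighted norm without losing derivatives. The delicate points are $\theta=0,\pm\pi$, where $\rho_2$ blows up. There I would exploit that $\partial_\theta N_i$ vanishes appropriately there, since $\partial_\theta\eta$ already carries a factor $|\sin\theta|$, and apply a Hardy inequality from \cref{app:tech}.
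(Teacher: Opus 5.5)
Your architecture matches the paper's: Galerkin in $\{e_{2,k}\}$, a weighted $H^m$ energy for $u^\pm=\rho_2^{1/2}\partial_\theta\eta^\pm$, the Riccati inequality integrated to exactly the stated $T_0$, time-derivative bounds, and compactness. However, the step that actually closes the top-order nonlinear estimate is misidentified. Not all transport terms are self-transport. The term $-\{\eta^-,v^-\}$ in $N_1$ puts $+v^-\partial_\theta\eta^-$ into the $\eta^+$-equation, and $-\{\eta^+,v^-\}$ in $N_2$ puts $+v^-\partial_\theta\eta^+$ into the $\eta^-$-equation. By contrast, $\{\eta^-,v^+\}$, which you list as a cross term, is just transport of $\eta^-$ by $v^+$. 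For the two genuine cross terms, integrating by parts within one equation does not produce $H\eta\,|\partial_\theta^m u|^2$. The contribution $\langle v^-\partial_\theta^{m+1}u^-,\partial_\theta^m u^+\rangle$ loses a derivative and cannot be bounded by $I_m^3$ on its own, so ``treated the same way'' does not close. What saves the estimate is that both terms carry the same velocity $v^-$ with the same sign. Only in the summed energy do they combine into
\[
\langle v^-\partial_\theta^{m+1}u^-,\partial_\theta^m u^+\rangle+\langle v^-\partial_\theta^{m+1}u^+,\partial_\theta^m u^-\rangle=-\int_{\T}\partial_\theta v^-\,\partial_\theta^m u^+\,\partial_\theta^m u^-\,\mathrm d\theta ,
\]
which is exactly the paper's term $J_3$ (and the analogous step in its $L^2$ estimate). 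Equivalently, the transport matrix acting on $(\eta^+,\eta^-)$ is symmetric, or one can work with $\eta^+\pm\eta^-$, each of which is transported by a single velocity. The same pairing is needed in your difference estimate for uniqueness, a step the paper itself only asserts.

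There are further points. First, your treatment of the weight has to be done factor by factor, not on $\partial_\theta N_i$ as a whole. Since $N_i$ is only in $H^m$, a Hardy argument applied to $\partial_\theta N_i$ gives only $\rho_2^{1/2}\partial_\theta N_i\in H^{m-2}$, one derivative worse than the transport structure can compensate. The paper instead uses $\partial_\theta\eta=\rho_2^{-1/2}u$ to write $-\sqrt\pi\rho_2^{1/2}\partial_\theta N_i$ as products of $u,\partial_\theta u,\eta$ with $h=\sqrt\pi\rho_2^{1/2}v$ and $g=\sqrt\pi\rho_2^{1/2}\partial_\theta^2 v$. Then the only top-order terms are transport terms of the form $\sin\theta\,h\,\partial_\theta u$, and the paper proves $\|g\|_{H^m}\lesssim\|u\|_{H^m}$, $\|\partial_\theta^m h\|_{L^2}\lesssim\|\partial_\theta^{m-1}u\|_{L^2}$ and $\|h\|_{L^\infty}+\|\partial_\theta(\sin\theta\,h)\|_{L^\infty}\lesssim\|u\|_{L^2}$ directly from the Fourier expansions. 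An $H^m$ Hardy inequality $\|f/\sin\theta\|_{H^m}\lesssim\|f\|_{H^{m+1}}$ applied to $f=\eta$ and $f=v$ would be an acceptable substitute; note that the appendix lemma is only an $L^\infty$ bound.

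Second, use the signed root $\rho_2^{1/2}=(2\sqrt\pi\sin\theta)^{-1}$, as the paper implicitly does. With $|\sin\theta|$, the $u$ of a Galerkin approximant is $\mathrm{sign}(\sin\theta)$ times a trigonometric polynomial and has corners at $0,\pm\pi$.

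Finally, the bounds on $\partial_t u^\pm$ and $\partial_t^2u^\pm$ cannot be ``upgraded'' to $H^m$. Since $\partial_t u=-\tfrac12\sin(2\theta)\partial_\theta u+\dots$, only $H^{m-1}$ and $H^{m-2}$ bounds are available. This is all the paper's Steps 2c--2d prove, and all the stated regularity class requires.
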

The proof of \cref{le:nonlinear existence} is based on the linear analysis of \cref{the:existence}, the main challenge being to obtain suitable estimates for the nonlinear terms.

We then establish a nonlinear instability result. By \cref{the:instability_linearized}, the linearized evolution of \(\eta^{+}\)  exhibits instability in \(\mathcal{H}_{2}\), in the sense \(\|\eta^{+} (t)\|_{\mathcal{H}_{2}} > E_{1}^{1/2} (t)\) for \( t >0\). To analyze the corresponding nonlinear system and prove the following instability result, we derive some energy estimates for the system \cref{eq:perturbations2}, which enable us to rigorously pass the limit in a suitable scaled formulation and recover the linearized structure for a suitable class of initial data. 

\begin{theorem}[Instability for the nonlinear problem]\label{the:instability_nonlinear}
    Let us consider an integer \(m > 3\) and real constants \(\delta > 0\) and \(K > 0\). Let \(F : [0, + \infty) \to \mathbb{R}\) be a function satisfying 
    \begin{align}\label{eq:def_F}
        F(y) \leq K y, \quad  \text{for all} \quad  y\in[0,\infty).
    \end{align}
    There exist initial values \(\eta_{0}^{\pm}\) satisfying
    \begin{align*}
         I_{m} (0) \coloneqq \sqrt{\| \rho_2^{1/2} \partial_{\theta} \eta^{+}_{0} \|_{H^{m}}^{2} +  \| \rho_2^{1/2} \partial_{\theta} ,\eta^{-}_{0} \|_{H^{m}}^{2}} < \delta\qquad \text{and} \qquad 0 \leq \left\langle L_{1}^{+} u_{0}^{+}, u_{0}^{+} \right\rangle \leq \sqrt{\lambda_{1}} \|u_{0}^{+}\|_{L^{2}},
    \end{align*}
    where \( u_{0}^{+} \coloneqq \rho_2^{1/2} \partial_{\theta} \eta_{0}^{+}\), \( L_{1}^{+} u_{0}^{+} \coloneqq \rho_2^{1/2} \partial_{\theta} L^{+} \eta_{0}^{+} \), and \(\lambda_{1}\) is the same as in \cref{re:instability_linearized_remark}. 
    
    Then the corresponding solution \(\eta^{+}\) constructed in \cref{le:nonlinear existence} satisfies 
    \begin{align*}
        \| u^{+} (t_{K})\|_{L^{2}} > F(I_{m} (0)), \qquad \text{for some \(t_{K} \in (0,T_{0})\),}
    \end{align*}
     where \(u^{+} (t,\theta) \coloneqq \rho_2^{1/2} \partial_{\theta} \eta^{+} (t,\theta)\) and \( T_{0}\) is same as in \cref{le:nonlinear existence}. 
\end{theorem}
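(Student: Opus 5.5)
We argue by contradiction, using the standard ``Lipschitz-type'' instability scheme: if \(F(y)\le Ky\), then showing that the nonlinear flow cannot stay within a linear bound \(\|u^{+}(t)\|_{L^2}\le K I_m(0)\) on \((0,T_0)\) suffices. Fix the linearly unstable profile \(\eta_0^{+}=e_{2,1}\) (normalized in \(\mathcal H_2\)) and \(\eta_0^{-}=0\). Then \(\langle L^+e_{2,1},e_{2,1}\rangle_{\mathcal H_2}=\tfrac{11}{18}\|e_{2,1}\|_{\mathcal H_2}^2\ge0\). Since \(\lambda_1\) can be taken small (\(\lambda_1>1/50\)), we can rescale or mix in a small \(e_{2,k}\) component, as in the remark following \cref{the:instability_linearized}, to enforce \(0\le\langle L_1^+u_0^+,u_0^+\rangle\le\sqrt{\lambda_1}\|u_0^+\|_{L^2}\). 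Set \(\eta^{\pm}_{0,\varepsilon}=\varepsilon\,\eta^{\pm}_0\) with \(\varepsilon\) small, so that \(I_m(0)=\varepsilon I_m^{(1)}<\delta\).

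Suppose, for contradiction, that \(\|u^+_\varepsilon(t)\|_{L^2}\le F(I_m(0))\le K\varepsilon I^{(1)}_m\) for all \(t\in(0,T_0)\). We introduce the rescaled solutions \(\tilde\eta^\pm_\varepsilon\coloneqq\eta^\pm_\varepsilon/\varepsilon\), which satisfy
\begin{align*}
\partial_t\tilde\eta^{+}_\varepsilon=L^+\tilde\eta^+_\varepsilon+\varepsilon N_1(\tilde\eta_\varepsilon),\qquad \partial_t\tilde\eta^{-}_\varepsilon=L^-\tilde\eta^-_\varepsilon+\varepsilon N_2(\tilde\eta_\varepsilon),
\end{align*}
with initial data \(\eta_0^\pm\). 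By \cref{le:nonlinear existence}, the existence time \(T_0=T_0(\varepsilon)=\frac1{C_1}\ln\bigl(1+\frac{C_1}{2C_2\varepsilon I^{(1)}_m}\bigr)\) tends to \(+\infty\) as \(\varepsilon\to0\). The same energy estimates give \(\sup_{[0,T]}\|\rho_2^{1/2}\partial_\theta\tilde\eta^\pm_\varepsilon\|_{H^m}\le C(T)\) for each fixed \(T\) and all small \(\varepsilon\), and similarly for the first and second time derivatives. Hence the terms \(\varepsilon N_i\) are \(O(\varepsilon)\) in \(H^{m-1}\)-type weighted norms on \([0,T]\).

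Next we pass to the limit. By Aubin--Lions (or directly, since the difference \(\tilde\eta_\varepsilon-\eta_{\rm lin}\) solves the linear system with forcing \(O(\varepsilon)\) and we may reuse the linear energy estimate from \cref{the:existence}), \(\tilde\eta^\pm_\varepsilon\to\eta^\pm_{\rm lin}\) in \(C([0,T];\mathcal H_2)\). Here \(\eta_{\rm lin}\) is the solution of \cref{eq:linearized} given by \cref{the:existence}. The contradiction hypothesis yields \(\|\tilde\eta^+_\varepsilon(t)\|_{\mathcal H_2}=\|\tilde u^+_\varepsilon(t)\|_{L^2}\le KI^{(1)}_m\) on \([0,T]\) for every \(T<T_0(\varepsilon)\), so in the limit \(\|\eta^+_{\rm lin}(t)\|_{\mathcal H_2}\le KI^{(1)}_m\) for all \(t\ge0\). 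On the other hand, \(\eta^-_{\rm lin}\equiv0\) because \(\eta_0^-=0\), so \cref{the:instability_linearized} gives \(\|\eta^+_{\rm lin}(t)\|_{\mathcal H_2}>E_1^{1/2}(t)\gtrsim e^{\sqrt{\lambda_1}t}\). The condition \(\langle L^+\eta_0^+,\eta_0^+\rangle\le\sqrt{\lambda_1}\|\eta_0^+\|^2\) keeps both coefficients of \(E_1\) nonnegative, and the growing coefficient is at least \(\tfrac12\|\eta_0^+\|^2_{\mathcal H_2}\). Choosing \(T\) with \(E_1^{1/2}(T)>KI^{(1)}_m+1\), then \(\varepsilon\) so small that \(T<T_0(\varepsilon)\) and the convergence error on \([0,T]\) is below \(1\), we obtain a contradiction. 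Some \(t_K\in(0,T_0)\) therefore violates the bound, which is the claim.

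The main obstacle is the uniform-in-\(\varepsilon\) control of the nonlinear terms on a fixed interval \([0,T]\). The brackets \(\{\eta,v\}\) lose a derivative, and the weight \(\rho_2\sim\sin^{-2}\theta\) must be handled at \(\theta=0,\pm\pi\). I would reuse the weighted commutator/energy estimates behind \cref{le:nonlinear existence}, which bound \(\|\rho_2^{1/2}\partial_\theta N_i\|_{H^{m-1}}\lesssim I_m^2\). Combined with the Gronwall-type growth \(e^{C_1t}\) of the linear part, these estimates show that the error \(\tilde\eta_\varepsilon-\eta_{\rm lin}\) in \(\mathcal H_2\) is bounded by \(C\varepsilon e^{CT}\).
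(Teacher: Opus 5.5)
Your proposal is correct and follows essentially the same route as the paper. It scales a linearly unstable profile by $\varepsilon$, assumes the Lipschitz bound $\|u^+(t)\|_{L^2}\le K I_m(0)$ on $(0,T_0(\varepsilon))$, rescales by $1/\varepsilon$, uses the $\varepsilon$-uniform bounds from \cref{le:nonlinear existence} to recover the linearized solution in the limit, and contradicts the exponential lower bound of \cref{the:instability_linearized}. The differences are minor: you pass to the limit through a quantitative difference estimate in $C([0,T];\mathcal H_2)$ rather than weak compactness plus uniqueness, which carries the pointwise-in-time bound to the limit more cleanly. Also, the $e_{2,k}$ component needed to enforce $\langle L^+\eta_0^+,\eta_0^+\rangle_{\mathcal H_2}\le\sqrt{\lambda_1}\|\eta_0^+\|_{\mathcal H_2}^2$ is in general of the same order as the $e_{2,1}$ coefficient, not small.
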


As a consequence of \cref{the:instability_nonlinear}, we say that the system \cref{eq:perturbations2} is \textit{unstable under the Lipschitz structure}. This terminology, motivated by the Lipschitz condition \cref{eq:def_F}, originates from  \cite{MR2963789} (where this type of instability was first discussed for compressible inviscid fluids) and \cite{JJN2013} (for inhomogeneous incompressible viscous fluids).

\begin{remark}[Lack of stability]
    From \cref{the:instability_nonlinear}, we deduce that \cref{eq:perturbations2} \emph{does not} possess the following property: there exists a constant $C>0$ such that
    \begin{align}\label{eq:Lip instability}
        \sup_{0\leq t\leq T} \|u^{+} (t) \|_{L^{2}}\leq C\|u_{0}^{+}\|_{H^{m}}, \qquad \text{for any \(T > 0\)}.
    \end{align}
\end{remark}

\begin{remark}[Initial data satisfying the assumptions of \cref{the:instability_nonlinear}]
    \cref{the:instability_nonlinear} holds for a broad class of initial data. As a simple example, consider initial data of the following form. Assume that \(\rho_2^{1/2} \partial_{\theta} \eta_{0}^{-} \in H^{m}\) for any integer \(m > 3\), and \(\eta_{0}^{+}\) is of the form
    \begin{align*}
        \eta_{0}^{+} (\theta) = \eta_{2,1}^{+} (0) e_{2,1} + \eta_{2,k}^{+} (0) e_{2,k},
    \end{align*}
    where \(\eta_{2,1}^{+} (0),\eta_{2,k}^{+} (0) \neq 0\) for some \(k\geq 2\). It is straightforward to verify that \(\eta_{0}^{+} \in \mathcal{H}_{2}\) and \(\rho_2^{1/2} \partial_{\theta} \eta_{0}^{+}\in H^{m}\) for \(m > 3\). If the coefficients of \(\eta_{0}^{+}\) satisfy
    \begin{align*}
        \frac{\frac{11}{18}-\sqrt{\lambda_{1}}}{\sqrt{\lambda_{1}} + (d_{2, k+2}^{+} - d_{2,k}^{+})} (\eta_{2,1}^{+} (0) )^{2} \leq (\eta_{2,k}^{+} (0) )^{2} \leq \frac{11}{18 (d_{2,k+2}^{+} - d_{2,k}^{+})} (\eta_{2,1}^{+} (0) )^{2},
		\end{align*}
        then it follows that
		\begin{align*}
			0 \leq \left\langle L^{+} \eta_{0}^{+}, \eta_{0}^{+} \right\rangle_{\mathcal{H}_{2}} \leq \sqrt{\lambda_{1}} \left\langle\eta_{0}^{+}, \eta_{0}^{+} \right\rangle_{\mathcal{H}_{2}}.
		\end{align*}
        There, \(\lambda_1\) is an absolute constant satisfying \( \frac{1}{50} < \lambda_1 < \frac{3}{5}\), and \(d_{2,k}^{+} - d_{2,k+2}^{+} < 0\) for all \(k \geq 2\); see \cref{le:lambda}  for more details.
\end{remark}

Finally, we establish the nonlinear stability result \cref{the:stability_nonlinear} for initial data \(\eta^{+}_{0}\) of the form \(\eta^{+}_{0} (\theta) = \sum_{k\geq 1} \eta_{2, 2k}^{+} (0) e_{2, 2k}\) (i.\,e., initial data such that only the even coefficients in the Fourier expansion with respect to the basis $\{e_{2,k}\}_{k \geq 1}$ are present). With this choice of initial data, the linearized system is stable in the space \(\mathcal{H}_{2}\); combined with a continuity argument and delicate estimates, this yields the nonlinear stability of \cref{eq:perturbations} in \(\mathcal{H}_{2}\).

\begin{theorem}[Global well-posedness and stability for the nonlinear problem]\label{the:stability_nonlinear}
    Let us suppose the initial value \(\eta_{0}^{\pm}\) satisfies \(\eta_{0}^{+} (\theta) = \sum_{k\geq 1} \eta_{2, 2k} (0) e_{2, 2k}\) and \(\eta_{0}^{-} \in \mathcal{H}_{2}\).  Let us define
    \begin{align*}
        I_{0}^{2} (t) \coloneqq \| \eta^{+} (t)\|_{\mathcal{H}_{2}}^{2} + \| \eta^{-} (t)\|_{\mathcal{H}_{2}}^{2}, \qquad \text{for $t \ge 0$}.
    \end{align*}
    Then there exists a constant \( \varepsilon > 0\) such that if \( I_{0} (0) \leq \varepsilon\), then the  system \cref{eq:linearized} with initial value \(\eta_{0}^{\pm}\) is globally well-posed. Moreover, the following estimate holds:
    \begin{align*}
        I_{0} (t) \lesssim e^{- \frac{1}{2} t} I_{0} (0), \qquad \text{for all \( t \geq 0\).}
    \end{align*}
\end{theorem}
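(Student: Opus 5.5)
Our plan is to combine a linear decay estimate in $\mathcal H_2$ for both components with an energy estimate for the nonlinear terms, and to close it with a continuity (bootstrap) argument.

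\emph{Step 1: linear dissipation.} Expand $\eta^\pm(t)=\sum_k \eta^\pm_{2,k}(t)e_{2,k}$ in the orthonormal basis of $\mathcal H_2$. By the diagonal identity \cref{eq:ODE eta positive} and its analogue for $L^-$, we have
\[
\langle L^\pm\eta,\eta\rangle_{\mathcal H_2}=\sum_k c_k^\pm(\eta_{2,k})^2 .
\]
Since $c_k^-\le -C$ for all $k$, we take this bound with explicit constant at least $\tfrac12$, via \cref{ssec:lineareta-}, checked directly from the formula for $c_k^-$. For $L^+$ we use $c_k^+=d^+_{2,k}-d^+_{2,k+2}=-\tfrac12-\frac{2k^2-4k-8}{(k+2)^2k^2}$. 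For even $k\ge2$ we check that $2k^2-4k-8\ge0$, so $c_k^+\le-\tfrac12$. (For $k=2$ the numerator is $-8$, giving $c_2^+=-\tfrac12+\tfrac18$. The uniform rate would then only be $\tfrac38$, so the constant $\tfrac12$ in the statement presumably comes from the paper's precise constants; we aim for some $c_0>0$ and match it to $\tfrac12$ where possible.)

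Next we verify that $L^+$ preserves the subspace $\mathcal E\coloneqq\overline{\mathrm{span}}\{e_{2,2k}\}$. The operator $\{\cdot,\sin2\theta\}$ and $f\mapsto v(f)$ map $\sin(j\theta)$ to combinations of $\sin((j\pm2)\theta)$, so the parity of the frequency is preserved. Hence linear solutions starting in $\mathcal E$ stay in $\mathcal E$ and decay exponentially.

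\emph{Step 2: the nonlinear terms and invariance.} This is the main obstacle, because the nonlinearity mixes parities. The term $\{\eta^-,v^-\}$ in $N_1$ can create odd-indexed modes in $\eta^+$, which are unstable. We try two things.

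First, we check the symmetry structure. If $\eta^+$ contains only frequencies $\equiv 0\pmod 2$ in the basis index, i.e.\ $\eta^+$ is $\pi$-periodic in a suitable sense, we check whether the quadratic terms preserve this. Under $q=0$, $N_1=\{\eta^+,v^+\}-\{\eta^-,v^-\}$. The bracket $\{\eta^-,v^-\}$ of two general odd functions has mixed frequencies, so invariance fails in general.

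Second, we therefore control the odd part of $\eta^+$ via the bootstrap rather than by invariance. Write $\eta^+=\eta^+_e+\eta^+_o$. For $\eta^+_o$ we use Duhamel with the linear growth rate at most $2\sqrt{\lambda_2}$ from \cref{eq:soo}. Its forcing is quadratic in $(\eta^-,\eta^+)$, which decays at rate $e^{-t/2}$, so the forcing decays at rate $e^{-t}$. If the growth rate were smaller than $1$ this would close, but it need not be. More likely the intended argument treats the system with the projection in which $\eta^+$ remains in $\mathcal E$, checking that $\{\eta^-,v^-\}$ projects trivially onto odd modes under the oddness of $\eta^-$. We will compute the Fourier parity of $\{\sin j\theta,\ \cos l\theta\ \text{-type } v\}$ to decide this, and adopt whichever is true.

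\emph{Step 3: energy estimate and continuity.} Assuming $\eta^+(t)\in\mathcal E$, we obtain
\[
\tfrac12\tfrac{d}{dt}I_0^2\le -\tfrac12 I_0^2+|\langle N_1,\eta^+\rangle_{\mathcal H_2}|+|\langle N_2,\eta^-\rangle_{\mathcal H_2}|.
\]
We bound the nonlinear pairings by $C I_0^3$. For this we integrate by parts in $\langle\{\eta,v\},\eta\rangle_{\mathcal H_2}$, using the commutator structure of $\rho_2\partial_\theta$ with transport, and we use the weighted Hardy-type bounds $\|\eta/\sin\theta\|_{L^\infty}+\|H\eta\|_{L^\infty}\lesssim\|\eta\|_{\mathcal H_2}$ from the appendix lemmas. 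Where the top-order derivative appears, we use local well-posedness (\cref{le:nonlinear existence}) and smooth approximation.

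This gives $\tfrac{d}{dt}I_0\le(-\tfrac12+CI_0)I_0$, up to the constant adjustment noted in Step 1. Choosing $\varepsilon$ so that $CI_0\le$ a small margin on $[0,T]$, a standard continuity argument propagates the smallness, extends the solution globally, and yields $I_0(t)\lesssim e^{-t/2}I_0(0)$.
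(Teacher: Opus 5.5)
Your Steps 1 and 3 have the same structure as the paper's proof, but your argument is conditional. Step 3 starts from ``assuming $\eta^+(t)\in\mathcal E$'', and Step 2 never establishes this. Your first check in Step 2 is the right one, and it answers the question negatively.

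\emph{Why $\mathcal E$ is not invariant.} Under $\theta\mapsto\theta+\pi$, the operators $L^\pm$ and $f\mapsto v(f)$ preserve parity, and $\{f,g\}$ has the product parity. So whenever $\eta^+\in\mathcal E$, the odd-indexed part of $\partial_t\eta^+$ is exactly the odd-frequency part of $-\{\eta^-,v^-\}$, namely $-(\{\eta^-_o,v^-_e\}+\{\eta^-_e,v^-_o\})$. Oddness of $\eta^-$ in $\theta$ is a different symmetry and does not kill this term. For example, for $\eta^-=e_{2,1}+e_{2,2}$ the $\sin\theta$-coefficient of $\{\eta^-,v^-\}$ is $-\tfrac{95}{288}$. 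So the ``more likely'' alternative you float in Step 2 is false.

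\emph{Why the Duhamel fallback fails.} On odd-indexed modes $L^+$ has positive growth: $\langle L^+e_{2,1},e_{2,1}\rangle_{\mathcal H_2}=\tfrac{11}{18}$, and \cref{the:instability_linearized} gives growth at least like $e^{\sqrt{\lambda_1}t}$. A source of size $\varepsilon^2e^{-t}$ then produces $\int_0^t e^{\mu(t-s)}\varepsilon^2e^{-s}\,\mathrm ds\sim\varepsilon^2e^{\mu t}$ for every $\mu>0$. Whether $\mu<1$ is irrelevant; you would need $\mu<0$. Hence the inequality $\tfrac12\tfrac{\mathrm d}{\mathrm dt}I_0^2\le-c_0I_0^2+CI_0^3$ that Step 3 needs is not available for general $\eta_0^-\in\mathcal H_2$.

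The paper does not supply the missing step either. Its proof writes the $\mathcal H_2$ energy identity for the full system and inserts $\langle L^+\eta^+,\eta^+\rangle_{\mathcal H_2}\le-\tfrac38\|\eta^+\|_{\mathcal H_2}^2$ from \cref{rk:eta+stab} along the nonlinear flow, without checking that $\eta^+(t)$ stays in $\mathcal E$. It then bounds $\langle N_1,\eta^+\rangle_{\mathcal H_2}+\langle N_2,\eta^-\rangle_{\mathcal H_2}\lesssim I_0^3$:
\begin{itemize}
\item the transport pieces by integrating by parts onto $\partial_\theta(v^\pm/\sin^2\theta)$, using $\|v/\sin\theta\|_{L^\infty}\lesssim\|\partial_\theta v\|_{L^\infty}\lesssim\|\partial_\theta\eta\|_{L^2}$;
\item the remaining pieces by \cref{le:hardy type} and $\|\partial_\theta^2 v\|_{L^2}=\|H\partial_\theta\eta\|_{L^2}$.
\end{itemize}
It closes with a bootstrap. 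It also switches from $-\tfrac38$ to $-\tfrac12$ without justification, so your remark that $c_2^+=-\tfrac38$ caps the rate is correct.

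What actually makes your Step 3 close is an extra parity hypothesis on $\eta_0^-$. By the parity rule above, two classes are invariant under the full flow with $q=0$:
\begin{itemize}
\item $\eta^+,\eta^-\in\mathcal E$;
\item $\eta^+\in\mathcal E$ and $\eta^-\in\overline{\operatorname{span}}\{e_{2,2k-1}\}_{k\ge1}$. Here $\{\eta^-,v^-\}$ is odd$\times$odd, hence even-frequency, while $N_2$ is odd-frequency.
\end{itemize}
In either class your energy estimate gives $I_0(t)\lesssim e^{-3t/8}I_0(0)$ for small data. If instead the statement is read literally as concerning \cref{eq:linearized}, the two components decouple, $L^+$ preserves $\mathcal E$, and your Step 1 alone suffices, again with rate $\tfrac38$ for $\eta^+$.
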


The proof of \cref{the:stability_nonlinear} relies on the linear stability analysis, delicate estimates on the nonlinear terms, and a continuity argument.

\section{Global existence and uniqueness for the linearized problem}
\label{sec:existence-linearized}

In this section, we prove \cref{the:existence}, establishing the global existence and uniqueness of solutions to the system \cref{eq:linearized} using Galerkin's method. To obtain the decay estimates for the coefficients \(\eta_{2,k}^{\pm}\), \(\frac{\mathrm d}{\mathrm d t} \eta_{2,k}^{\pm}\), and \(\frac{\mathrm d^{2}}{\mathrm d t^{2}} \eta_{2,k}^{\pm}\), as required in \cref{sec:instability-linearized}, we construct approximate solutions using the basis \(\{e_{2,k}\}_{k \geq 1}\).

\begin{proof}[Proof of \cref{the:existence}]
    \uline{Step 1.} \emph{Construction of approximate solutions.} Let us fix a positive integer \(n\) and define
    \begin{align}
        & \eta_{n}^{\pm} (t, \theta) = \sum_{k = 1}^{n}\eta_{2, k}^{\pm} (t) e_{2, k},\label{eq:etan expression}\\
        & \partial_{\theta} v_{n}^{\pm} (t, \theta) = H \eta_{n}^{\pm} (t, \theta).
    \end{align}
    We aim to determine the coefficients \(\eta_{2, k}^{\pm} (t)\) such that
    \begin{align}
       \left\langle \partial_{t} \partial_{\theta} \eta_{n}^{+}, \partial_{\theta} e_{2, k} \rho_2 \right\rangle - \left\langle \partial_{\theta} L^{+} \eta_{n}^{+}, \partial_{\theta} e_{2, k} \rho_2 \right\rangle &= 0,\label{eq:etak positive}\\
        \left\langle \partial_{t} \partial_{\theta} \eta_{n}^{-}, \partial_{\theta} e_{2, k} \rho_2 \right\rangle - \left\langle \partial_{\theta} L^{-} \eta_{n}^{-}, \partial_{\theta} e_{2, k} \rho_2 \right\rangle &= 0,\label{eq:etak negative} \\ 
         \left\langle \partial_{\theta} \eta_{0}^{+}, \partial_{\theta} e_{2, k} \right\rangle &= \eta_{2, k}^{+} (0),\label{eq:etan positive initial data}\\
         \left\langle \partial_{\theta} \eta_{0}^{-}, \partial_{\theta} e_{2, k} \right\rangle &= \eta_{2, k}^{-} (0)\label{eq:etan negative initial data}
    \end{align}
    for \(0 \leq t \leq T\), and \(k = 1, 2, \ldots, n\). 
    
    Equations \crefrange{eq:etak positive}{eq:etak negative} can be rewritten as
\begin{align*}
    & \left\langle \partial_{t} \sum_{k = 1}^{n} \eta_{2, l}^{+} (t) \sin((l+1) \theta), \sin((k+1)\theta) \right\rangle - \left\langle -\frac{1}{2 \sin(\theta)} \partial_{\theta} L^{+} \eta_{n}^{+}, \sin((k+1)\theta) \right\rangle = 0,\\
    & \left\langle \partial_{t} \sum_{k = 1}^{n} \eta_{2, l}^{-} (t) \sin((l+1) \theta), \sin((k+1)\theta) \right\rangle - \left\langle -\frac{1}{2 \sin(\theta)} \partial_{\theta} L^{-} \eta_{n}^{-}, \sin((k+1)\theta) \right\rangle = 0,
\end{align*}
    where 
    \begin{align*}
       & -\frac{1}{2 \sin(\theta)} \partial_{\theta} L^{+} \eta_{n}^{+} = \frac{1}{2} \cos(\theta) \partial_{\theta}^{2} \eta_{n}^{+} + 2\cos(\theta) \eta_{n}^{+} + \cos(\theta) H (\partial_{\theta} \eta_{n}^{+}) + 4\cos(\theta) v_{n}^{+},\\
       & -\frac{1}{2 \sin(\theta)} \partial_{\theta} L^{-} \eta_{n}^{-} = \frac{1}{2} \cos(\theta) \partial_{\theta}^{2} \eta_{n}^{-} + 2 \cos(\theta) \eta_{n}^{-} - \cos(\theta) H (\partial_{\theta} \eta_{n}^{-}) - 4 \cos(\theta) v_{n}^{-}.
    \end{align*}
    To simplify the notation, we set \(u_{n}^{\pm} \coloneqq - \sqrt{\pi} \rho_2^{1/2} \partial_{\theta} \eta_{n}^{\pm}\) and define
    \begin{align*}
        L_{1}^{+} (u_{n}^{+}) &\coloneqq - \sqrt{\pi} \rho_2^{1/2} \partial_{\theta} L^{+} \eta_{n}^{+}\\
        & = - \frac{1}{2} \sin(2\theta) \partial_{\theta} u_{n}^{+} - \cos^{2}(\theta) u_{n}^{+} - 2 \cos(\theta) H (\sin(\theta) u_{n}^{+}) + 2 \cos(\theta) \eta_{n}^{+} + 4 \cos(\theta) v_{n}^{+}, \\
        L_{1}^{-} (u_{n}^{-}) & \coloneqq - \sqrt{\pi} \rho_2^{1/2} \partial_{\theta} L^{-} \eta_{n}^{-}\\
        & = - \frac{1}{2} \sin(2\theta) \partial_{\theta} u_{n}^{-} - \cos^{2}(\theta) u_{n}^{-} + 2 \cos(\theta) H (\sin(\theta) u_{n}^{-}) + 2 \cos(\theta) \eta_{n}^{-} - 4 \cos(\theta) v_{n}^{-}.
    \end{align*}
    Then ordinary differential equations \crefrange{eq:etak positive}{eq:etak negative} are equivalent to 
    \begin{align}
       & \big\langle \partial_{t} u_{n}^{+}, \sin((k+1)\theta) \big\rangle - \big\langle L_{1}^{+} (u_{n}^{+}), \sin((k+1)\theta) \big\rangle = 0,\label{eq:un positive}\\
       & \big\langle \partial_{t} u_{n}^{-}, \sin((k+1)\theta) \big\rangle - \big\langle L_{1}^{-} (u_{n}^{-}), \sin((k+1)\theta) \big\rangle = 0.\label{eq:un negative}
    \end{align}
    We may verify that \crefrange{eq:un positive}{eq:un negative} hold for \(k = 0\), using the convention \(\eta_{2,0}^{\pm} (t) = 0\). The standard existence theory for ordinary differential equations then guarantees the existence of a unique sequence of absolutely continuous functions \(\eta_{2,k}^{\pm} (t)\), with \(k = 1,\, 2,\, \ldots,\, n\)) satisfying \crefrange{eq:etak positive}{eq:etak negative} and the initial conditions \crefrange{eq:etan positive initial data}{eq:etan negative initial data} in \(0 \leq t \leq T\).

    \uline{Step 2.} \emph{Energy estimates.} We need several estimates on $u_n^\pm$ and their derivatives, which are proven below.
    
    \uline{Step 2\,a.} \emph{\(L^{2}\)-estimates of \(u_{n}^{\pm}\).} Multiplying \cref{eq:un positive} by \(\eta_{2,k}^{+} (t)\) and \cref{eq:un negative} by \(\eta_{2,k}^{-} (t)\), and then summing up over \(k = 1, 2, \ldots, n\), we obtain
    \begin{align*}
        \frac{1}{2} \frac{\mathrm d}{\mathrm dt} \left( \| u_{n}^{+} \|_{L^{2}}^{2} + \| u_{n}^{-} \|_{L^{2}}^{2} \right)  = \left\langle L_{1}^{+} (u_{n}^{+}), u_{n}^{+} \right\rangle + \left\langle L_{1}^{-} (u_{n}^{-}), u_{n}^{-} \right\rangle.
    \end{align*}
    Applying H\"{o}lder's inequality and \cref{le:hilbert estimate}, we obtain
    \begin{align*}
        \left\langle 2\cos(\theta) H ( \sin(\theta) u_{n}^{\pm}), u_{n}^{\pm} \right\rangle & \lesssim \ \| H( \sin(\theta) u_{n}^{\pm} )\|_{L^{2}} \| u_{n}^{\pm} \|_{L^{2}}\\ 
        &\lesssim\ \| u_{n}^{\pm} \|_{L^{2}}^{2}.
    \end{align*}
    Since \( \eta_{n}^{\pm} (0) = 0 \), we obtain
	\begin{align*}
	    \left\langle 2 \cos(\theta) \eta_{n}^{\pm}, u_{n}^{\pm} \right\rangle & \lesssim \| \eta_{n}^{\pm} \|_{L^{\infty}} \| u_{n}^{\pm} \|_{L^{2}}\\
		& \lesssim \| \partial_{\theta} \eta_{n}^{\pm} \|_{L^{2}} \| u_{n}^{\pm} \|_{L^{2}}\\
		&\lesssim\ \| u_{n}^{\pm} \|_{L^{2}}^{2}.
	\end{align*}
    Since \(\eta_{n}^{\pm}\) and \(v_{n}^{\pm}\) are odd functions, we apply Poincar\'{e}'s inequality together with H\"{o}lder's inequality to deduce that
    \begin{align*}
        \left\langle 4 \cos(\theta) v_{n}^{\pm},  u_{n}^{\pm} \right\rangle &\lesssim \| v_{n}^{\pm} \|_{L^{\infty}} \| u_{n}^{\pm} \|_{L^{2}}\\
		& \lesssim \| \eta_{n}^{\pm} \|_{L^{2}} \| u_{n}^{\pm} \|_{L^{2}}\\
		& \lesssim \| \partial_{\theta}\eta_{n}^{\pm} \|_{L^{2}} \| u_{n}^{\pm} \|_{L^{2}}\\
		&\lesssim \| u_{n}^{\pm} \|_{L^{2}}^{2}.
    \end{align*}
    Thus, we obtain
    \begin{align*}
	\frac{\mathrm d}{\mathrm dt} \left( \| u_{n}^{+} (t) \|_{L^{2}} + \| u_{n}^{-} (t)\|_{L^{2}} \right) \leq C \left( \| u_{n}^{+} (t) \|_{L^{2}} + \| u_{n}^{-} (t)\|_{L^{2}} \right) ,
    \end{align*}
    which means that
    \begin{align}\label{estimate:unL2}
        \sup_{0 \leq t \leq T} \left( \| u_{n}^{+} (t) \|_{L^{2}} + \| u_{n}^{-} (t)\|_{L^{2}} \right) \leq C
    \end{align}
    for any \(T > 0\)  where \(C\) is a positive constant independent of \(n\).
    
    \uline{Step 2\,b.} \emph{{\(H^{m}\)-estimates of \(u_{n}^{\pm}\) (for \(m \geq 1\)).}}
    Multiplying equation \cref{eq:un positive} by \( (k + 1)^{2m} \eta_{2,k}^{+} (t)\) and equation \cref{eq:un negative} by \( (k + 1)^{2m} \eta_{2,k}^{-} (t)\), summing up over \( k = 1, 2,\ldots, n \), and applying integration by parts, we obtain
    \begin{align*}
        \frac{1}{2} \frac{\mathrm d}{\mathrm dt} \left( \| \partial_{\theta} u_{n}^{+} (t) \|_{L^{2}} + \| \partial_{\theta}^{m} u_{n}^{-} (t)\|_{L^{2}} \right) = \left\langle \partial_{\theta}^{m} L_{1}^{+} (u_{n}^{+}), \partial_{\theta}^{m} u_{n}^{+} \right\rangle + \left\langle \partial_{\theta}^{m} L_{1}^{-} (u_{n}^{-}), \partial_{\theta}^{m} u_{n}^{-} \right\rangle.
    \end{align*}
 Thus,
    \begin{align*}
	  \partial_{\theta}^{m} L_{1}^{+} (u_{n}^{+}) &= -\frac{1}{2} \sin(2\theta) \partial_{\theta}^{m+1} u_{n}^{+} - (m \cos(2\theta) + \cos^{2}(\theta) ) \partial_{\theta}^{m} u_{n}^{+} - 2 \cos(\theta) H (\sin(\theta) \partial_{\theta}^{m} u_{n}^{+}) + \text{l.\,o.\,t.},\\
      \partial_{\theta}^{m} L_{1}^{-} (u_{n}^{-}) &= \frac{1}{2} \sin(2\theta) \partial_{\theta}^{m+1} u_{n}^{-} + (m \cos(2\theta) + \cos^{2}(\theta) ) \partial_{\theta}^{m} u_{n}^{-} + 2 \cos(\theta) H (\sin(\theta) \partial_{\theta}^{m} u_{n}^{-}) + \text{l.\,o.\,t.},
    \end{align*}
      where ``l.\,o.\,t.'' denotes  \textit{lower-order terms} whose \(L^{2}\)-norms can be bounded by \(\| u_{n}^{\pm}\|_{H^{m-1}}\).  Taking \(\eta^{+}_{n}\) as an example. For the term \(\cos(\theta) H (\sin(\theta) u_{n}^{+})\), using \cref{le:hilbert estimate}, we yield
      \begin{align*}
          \|\partial_{\theta}^{m-k} (\cos(\theta))  H (\partial_{\theta}^{k} ( \sin(\theta) u_{n}^{+}))\|_{L^{2}} & \lesssim \| H (\partial_{\theta}^{k} ( \sin(\theta) u_{n}^{+}))\|_{L^{2}}\\
          & \lesssim \| \partial_{\theta}^{k} ( \sin(\theta) u_{n}^{+})\|_{L^{2}} \lesssim \| u_{n}^{+}\|_{H^{k}}  
      \end{align*}
for \(0 \leq k \leq m-1\). For the term \(\cos(\theta) \eta^{+}_{n}\), since
      \begin{align*}
          \partial_{\theta}^{m} \big(\cos(\theta) \eta^{+}_{n}\big) = \sum_{k \geq 0}^{m} \partial_{\theta}^{m-k} \cos(\theta) \partial_{\theta}^{k} \eta^{+}_{n}.
      \end{align*}
      If \(k = 0\), we apply Poincar\'{e}'s inequality to obtain
      \begin{align*}
          \|\partial_{\theta}^{m} \cos(\theta)  \eta^{+}_{n}\|_{L^{2}} & \lesssim \|  \eta^{+}_{n}\|_{L^{\infty}} \leq \|  \partial_{\theta} \eta^{+}_{n}\|_{L^{2}}\\
          & \leq \| \rho_{2}^{-1/2} \rho_{2}^{1/2} \partial_{\theta} \eta^{+}_{n}\|_{L^{2}} \lesssim \| u^{+}_{n}\|_{L^{2}}.
      \end{align*}
      If \(1 \leq k \leq m\), since \(u_{n}^{+} = -\sqrt{\pi} \rho_{2}^{1/2} \partial_{\theta} \eta_{n}^{+}\), we have
      \begin{align*}
          \|\partial_{\theta}^{m-k} \cos(\theta)  \partial_{\theta}^{k} \eta^{+}_{n}\|_{L^{2}} & = \|\partial_{\theta}^{m-k} \cos(\theta)  \partial_{\theta}^{k-1} (\partial_{\theta}\eta^{+}_{n})\|_{L^{2}}\\
          & = \|\partial_{\theta}^{m-k} \cos(\theta)  \partial_{\theta}^{k-1} (-2\sin(\theta) u^{+}_{n})\|_{L^{2}}\\
          & \lesssim \| u^{+}_{n}\|_{H^{k-1}}.
   \end{align*}
   Similarly, we obtain the other \textit{lower-order terms} whose \(L^{2}\)-norms can be bounded by \(\| u_{n}^{\+}\|_{H^{m-1}}\). 

    Applying H\"{o}lder's inequality and \cref{le:hilbert estimate}, we estimate
    \begin{align*}
        \left\langle 2 \cos(\theta) H(\sin(\theta) \partial_{\theta}^{m} u_{n}^{\pm}), \partial_{\theta}^{m} u_{n}^{\pm} \right\rangle &\leq  2\| H (\sin(\theta) \partial_{\theta}^{m} u_{n}^{\pm} ) \|_{L^{2}} \| \partial_{\theta}^{m} u_{n}^{\pm} \|_{L^{2}}\\ 
        &\leq C \| \partial_{\theta}^{m} u_{n}^{\pm} \|_{L^{2}}^2.
    \end{align*}
    Therefore, we deduce
    \begin{align*}
	\frac{\mathrm d}{\mathrm dt} \left( \| \partial_{\theta}^{m} u_{n}^{+}(t)\|_{L^{2}} + \|\partial_{\theta}^{m} u_{n}^{-}(t)\|_{L^{2}} \right) \leq C \left( \| u_{n}^{+}(t)\|_{H^{m}} + \| u_{n}^{-}(t)\|_{H^{m}} \right),
    \end{align*}
    which leads to the uniform estimate
    \begin{align}\label{estimate:unHm}
        \sup_{0 \leq t \leq T} \|u_{n}^{\pm}(t)\|_{H^{m}} \leq C,
    \end{align}
    where \(C\) is a constant depending only on \(m\), \(\T\), initial data \( u_{0}^{\pm} \), and \(T > 0 \).
    
    \uline{Step 2\,c.} \emph{{\(L^{2}\)-estimates of \(\partial_{t} u_{n}^{\pm}\).}} 
    We multiply \cref{eq:un positive} by \( \frac{\mathrm d}{\mathrm dt} \eta_{2,k}^{+} (t) \) and equation \cref{eq:un negative} by \( \frac{\mathrm d}{\mathrm dt} \eta_{2,k}^{-} (t) \), and then sum up over \(k=1,2,\ldots,n\), we obtain 
    \begin{align*}
	\|\partial_{t} u_{n}^{\pm} \|_{L^{2}} = \left\langle L_{1}^{+} (u_{n}^{\pm}), \partial_{t}u_{n}^{\pm} \right\rangle.
    \end{align*}
    Due to \(\eta_{n}^{\pm} (t) \) are odd functions, it follows from H\"{o}lder's inequality and Poincar\'{e}'s inequality that
    \begin{align}\label{estimate:pt unL2}
        \sup_{0 \leq t \leq T} \| \partial_{t} u_{n}^{\pm} (t) \|_{L^{2}} \leq C \| u_{n}^{\pm} (t) \|_{H^{1}} \leq  C(\T, u_{0}^{\pm}, T),
    \end{align}
    which yields the desired uniform estimate for \( \| \partial_{t} u_{n}^{\pm} (t) \|_{L^{2}}\).

    \uline{Step 2\,d.} \emph{{\(L^{2}\)-estimates of \(\partial_{t} \partial_{\theta} u_{n}^{\pm}\).}}
Before deriving the \(L^{2}\)-estimates for \(\partial_{t}^{2} u_{n}^{\pm}\), we first establish the corresponding \(L^{2}\)-estimates for \(\partial_{t} \partial_{\theta} u_{n}^{\pm}\). We multiply equation \cref{eq:un positive} by \((k+1)^{2} \frac{\mathrm d}{\mathrm dt}\eta_{2,k}^{+} (t)\), and equation \cref{eq:un negative} by \((k+1)^{2} \frac{\mathrm d}{\mathrm dt}\eta_{2,k}^{-} (t)\), sum over \(k = 1, 2, \ldots, n\), and then apply integration by parts to find
    \begin{align*}
        \| \partial_{t} \partial_{\theta} u_{n}^{+} (t) \|_{L^{2}}^{2} + \| \partial_{t} \partial_{\theta} u_{n}^{-} (t) \|_{L^{2}}^{2} & = \left\langle \partial_{\theta} L_{1}^{+} (u_{n}^{+}), \partial_{t} \partial_{\theta} u_{n}^{+} \right\rangle + \left\langle \partial_{\theta} L_{1}^{-} (u_{n}^{-}), \partial_{t} \partial_{\theta} u_{n}^{-} \right\rangle\\
        & \leq \| \partial_{\theta} L_{1}^{+} (u_{n}^{+}) \|_{L^{2}} \| \partial_{t} \partial_{\theta} u_{n}^{+} \|_{L^{2}} +  \| \partial_{\theta} L_{1}^{-} (u_{n}^{-}) \|_{L^{2}} \| \partial_{t} \partial_{\theta} u_{n}^{-} \|_{L^{2}}\\
        & \lesssim \| u_{n}^{+} \|_{H^{2}} \| \partial_{t} \partial_{\theta}u_{n}^{+} \|_{L^{2}} + \| u_{n}^{-} \|_{H^{2}} \| \partial_{t} \partial_{\theta}u_{n}^{-} \|_{L^{2}}.
    \end{align*}
   Consequently, Young’s inequality yields
   \begin{align}\label{estimate:pt ptheta unL2}
       \sup_{0\leq t \leq T} \| \partial_{t} \partial_{\theta} u_{n}^{\pm}(t)\|_{L^{2}} \leq C(\T, \| u_{0}^{\pm} \|_{H^{2}}, T).
   \end{align}
    ~\\
    
    \uline{Step 2\,e.} \emph{{\(L^{2}\)-estimates of \(\partial_{t}^{2} u_{n}^{\pm}\).}} 
    We apply \(\partial_{t}\) to the equation \crefrange{eq:un positive}{eq:un negative}, multiply the resulting expressions by \( \frac{\mathrm d^{2}}{\mathrm dt^{2}} \eta_{2,k}^{+} (t) \) and \( \frac{\mathrm d^{2}}{\mathrm dt^{2}} \eta_{2,k}^{-} (t) \), respectively, and sum over \( k = 1, 2, \ldots, n \), to obtain
    \begin{align*}
        \| \partial_{t}^{2} u_{n}^{+} (t) \|_{L^{2}} + \| \partial_{t}^{2} u_{n}^{-} (t) \|_{L^{2}} = \left\langle \partial_{t} L_{1}^{+} (u_{n}^{+}), \partial_{t}^{2} u_{n}^{+} \right\rangle + \left\langle \partial_{t} L_{1}^{-} (u_{n}^{-}), \partial_{t}^{2} u_{n}^{-} \right\rangle.
    \end{align*}
    We have 
    \begin{align*}
       \partial_{t} L_{1}^{+} (u_{n}^{+}) & = - \frac{1}{2} \sin(2\theta) \partial_{t} \partial_{\theta} u_{n}^{+} - \cos^{2}(\theta) \partial_{t} u_{n}^{+}\\ & \qquad - 2 \cos(\theta) H (\sin(\theta) \partial_{t} u_{n}^{+}) + 2 \cos(\theta) \partial_{t} \eta_{n}^{+} + 4 \cos(\theta) \partial_{t} v_{n}^{+}, \\
       \partial_{t} L_{1}^{-} (u_{n}^{-}) & = - \frac{1}{2} \sin(2\theta) \partial_{t} \partial_{\theta} u_{n}^{-} - \cos^{2}(\theta) \partial_{t} u_{n}^{-} \\ & \qquad + 2 \cos(\theta) H (\sin(\theta) \partial_{t} u_{n}^{-}) + 2 \cos(\theta) \partial_{t} \eta_{n}^{-} - 4 \cos(\theta) \partial_{t} v_{n}^{-}.
    \end{align*}
    To estimate the term \(\left\langle \cos(\theta) \partial_{t} \eta_{n}^{\pm}, \partial_{t}^{2} u_{n}^{\pm} \right\rangle\), since \(\partial_{t} \eta_{n}^{\pm} (0) = 0\), we apply H\"{o}lder's inequality together with Poincar\'{e}'s inequality to obtain
    \begin{align*}
        \left\langle \cos(\theta) \partial_{t} \eta_{n}^{\pm}, \partial_{t}^{2} u_{n}^{\pm} \right\rangle & \leq \|\partial_{t} \eta_{n}^{\pm} \|_{L^{\infty}} \| \cos(\theta) \|_{L^{2}} \| \partial_{t}^{2} u_{n}^{\pm} \|_{L^{2}}\\	
        & \lesssim \| \partial_{t} \partial_{\theta} \eta_{n}^{\pm} \|_{L^{2}} \| \partial_{t}^{2} u_{n}^{\pm} \|_{L^2}\\\
        & \lesssim \| u_{n}^{\pm} \|_{H^{2}} \| \partial_{t}^{2} u_{n}^{\pm} \|_{L^{2}}.
    \end{align*}
    For the term \( \left\langle \cos(\theta) \partial_{t}v_{n}^{\pm}, \partial_{t}^{2} u_{n}^{\pm} \right\rangle \), we invoke H\"{o}lder's inequality, Sobolev's embedding, and \cref{le:hilbert estimate} to deduce that
    \begin{align*}
        \left\langle \cos(\theta) \partial_{t} v_{n}^{\pm}, \partial_{t}^{2} u_{n}^{\pm} \right\rangle & \leq \| \partial_{t} v_{n}^{\pm} \|_{L^{\infty}} \| \cos(\theta) \|_{L^{2}} \| \partial_{t}^{2} u_{n}^{\pm} \|_{L^{2}}\\
        & \lesssim \| H (\partial_{t} \eta_{n}^{\pm} ) \|_{L^{2}} \| \partial_{t}^{2} u_{n}^{\pm} \|_{L^{2}}\\
        & \lesssim \| \partial_{t} \eta_{n}^{\pm} \|_{L^{2}} \| \partial_{t}^{2} u_{n}^{\pm} \|_{L^{2}}\\
        & \lesssim \|\partial_{t} \partial_{\theta} \eta_{n}^{\pm} \|_{L^{2}} \|\partial_{t}^{2} u_{n}^{\pm} \|_{L^{2}}\\
        & \lesssim \| \partial_{t} u_{n}^{\pm} \|_{L^{2}} \| \partial_{t}^{2} u_{n}^{\pm} \|_{L^{2}}.
    \end{align*}
    As a result, combining this with \cref{estimate:unHm} and \cref{estimate:pt ptheta unL2}, we obtain 
    \begin{align*}
        \sup_{0 \leq t \leq T} \| \partial_{t}^{2} u_{n}^{\pm} (t) \|_{L^{2}} \leq C(\T, u_{0}^{\pm}, T),
    \end{align*}
    which yields a uniform bound for \( \partial_{t}^{2} u_{n}^{\pm} (t) \).

    \uline{Step 2\,f.} \emph{{Higher order estimates of \(\partial_{t} u_{n}^{\pm}\) and \(\partial_{t}^{2} u_{n}^{\pm}\).}}
    To derive the \(H^{m-1}\)-estimate for \(\partial_{t} u_{n}^{\pm} (t) \) \( (m \geq 2)\), we multiply equation \cref{eq:un positive} by \((k+1)^{2m-2} \frac{\mathrm d}{\mathrm dt} \eta_{2,k}^{+} (t)\), equation \cref{eq:un negative} by \((k+1)^{2m-2} \frac{\mathrm d}{\mathrm dt} \eta_{2,k}^{-} (t)\), and then sum up
    over \( k = 1, 2, \ldots, n\). It then follows from  integration by parts that
    \begin{align*}
        \| \partial_{t} \partial_{\theta}^{m-1} u_{n}^{+} \|_{L^{2}} + \| \partial_{t} \partial_{\theta}^{m-1} u_{n}^{-} \|_{L^{2}} = \left\langle \partial_{\theta}^{m-1} L_{1}^{+} (u_{n}^{+}), \partial_{t} \partial_{\theta}^{m-1} u_{n}^{+} \right\rangle + \left\langle \partial_{\theta}^{m-1} L_{1}^{-} (u_{n}^{+}), \partial_{t} \partial_{\theta}^{m-1} u_{n}^{-} \right\rangle.
    \end{align*}
    We have
    \begin{align*}
        \partial_{\theta}^{m-1} L_{1}^{\pm} ( u_{n}^{\pm} ) = - \frac{1}{2} \sin(2\theta) \partial_{\theta}^{m} u_{n}^{\pm} + \text{l.\,o.\,t.}
    \end{align*}
    Combining with \cref{estimate:unHm}, we get
    \begin{align*}
        \| \partial_{t} \partial_{\theta}^{m-1} u_{n}^{\pm} \|_{L^{2}} \leq \| u_{n}^{\pm} \|_{H^{m}},
    \end{align*}
    where we have employed Sobolev's embedding and Young's inequality and \cref{le:hilbert estimate}.
    Thus, we conclude that
    \begin{align}\label{estimate:pt unHm}
        \sup_{0 \leq t \leq T} \| \partial_{t} u_{n}^{\pm} (t) \|_{H^{m-1}} \leq C ( \T, u_{0}^{\pm}, m, T).
    \end{align}
     
     Similarly, to derive the \(H^{m-2}\)-estimate for \( \partial_{t}^{2} u_{n}^{\pm} \) \((m \geq 2)\). We apply \(\partial_{t}\) to the equations \cref{eq:un positive} and \cref{eq:un negative}, multiply the resulting expressions by \( (k+1)^{2m-4} \frac{\mathrm d^{2}}{\mathrm dt^{2}} \eta_{2,k}^{+} (t) \) and \( (k+1)^{2m-4} \frac{\mathrm d^{2}}{\mathrm dt^{2}} \eta_{2,k}^{-} (t) \), respectively, and add up to \( k = 1, 2, \ldots, n \) to obtain 
     \begin{align*}
         &\| \partial_{t}^{2} \partial_{\theta}^{m-2} u_{n}^{+} (t) \|_{L^{2}}^{2} + \| \partial_{t}^{2} \partial_{\theta}^{m-2} u_{n}^{-} (t) \|_{L^{2}}^{2} \\ &= \left\langle \partial_{t}^{2} \partial_{\theta}^{m-2} L_{1}^{+} (u_{n}^{+}), \partial_{t} \partial_{\theta}^{m-2} u_{n}^{+} \right\rangle + \left\langle \partial_{t}^{2} \partial_{\theta}^{m-2} L_{1}^{-} (u_{n}^{-}), \partial_{t} \partial_{\theta}^{m-2} u_{n}^{-} \right\rangle.
     \end{align*}
     Combining with \cref{estimate:unL2}, \cref{estimate:unHm}, \cref{estimate:pt unL2}, and \cref{estimate:pt unHm}, we deduce
     \begin{align}\label{estimate:ptt unHm}
         \sup_{0 \leq t \leq T} \| \partial_{t} u_{n}^{\pm} (t) \|_{H^{m-2}} \leq C(\T, m, u_{0}^{\pm}, T),
     \end{align}
     which yields a uniform estimate for \( \| \partial_{t}^{2} u_{n}^{\pm} (t) \|_{H^{m-2}}\).

    Since \(u_{n}^{\pm} = -\sqrt{\pi} \partial_{\theta} \eta_{n}^{\pm}\), estimates \cref{estimate:unHm}, \cref{estimate:pt unHm}, and \cref{estimate:ptt unHm} can be equivalently rewritten as
    \begin{align}
    \sup_{0 \leq t \leq T} \| \rho_2^{1/2} \partial_{\theta} \eta_{n}^{\pm} (t) \|_{H^{m}} &\leq C(\T, m, \rho_2^{1/2} \partial_{\theta} \eta_{0}^{\pm}, T),\label{eq:etan Hm}\\
     \sup_{0 \leq t \leq T} \| \rho_2^{1/2} \partial_{t} \partial_{\theta} \eta_{n}^{\pm} (t) \|_{H^{m-1}} &\leq C(\T, m, \rho_2^{1/2} \partial_{\theta} \eta_{0}^{\pm}, T),\label{eq:ptetan Hm}\\
     \sup_{0 \leq t \leq T} \| \rho_2^{1/2} \partial_{t}^{2} \partial_{\theta} \eta_{n}^{\pm} (t) \|_{H^{m-2}} &\leq C(\T, m, \rho_2^{1/2} \partial_{\theta} \eta_{0}^{\pm}, T).\label{eq:pttetan Hm}
    \end{align}
for any \(T > 0\). 

Owing to \crefrange{eq:etan Hm}{eq:pttetan Hm}, we obtain the following decay estimates for the Fourier coefficients of \(\eta_{2,k}^{\pm} (t)\) in \cref{eq:etan expression}:
    \begin{align}
       \label{eq:decay-coeff}  \sup_{0\leq t \leq T} |\eta_{2,k}^{\pm} (t)| &\leq \frac{C(\T, m ,\eta_{0}^{\pm}, T)}{k^{m}},\\
       \label{eq:decay-coeff-dt}  \sup_{0\leq t \leq T} \left|\frac{\mathrm d}{\mathrm dt} \eta_{2,k}^{\pm} (t)\right| &\leq \frac{C(\T, m ,\eta_{0}^{\pm}, T)}{k^{m-1}},\\
       \label{eq:decay-coeff-ddt} \sup_{0\leq t \leq T} \left|\frac{\mathrm d^{2}}{\mathrm dt^{2}} \eta_{2,k}^{\pm} (t)\right| &\leq \frac{C(\T, m ,\eta_{0}^{\pm}, T)}{k^{m-2}}.
    \end{align}

\uline{Step 3.} \emph{Convergence of the approximate solutions.}
     Combining \cref{estimate:unHm}, \cref{estimate:pt unHm}, and \cref{estimate:ptt unHm}, we deduce that there exists a subsequence \( \{u_{n_{l}}^{\pm} \}_{n_{l}=1}^{\infty} \subset \{u_{n}^{\pm} \}_{n=1}^{\infty} \) such that
     \begin{align*}
         u_{n_{l}}^{\pm} &\rightharpoonup u^{\pm}\ &&\text{weakly in } L^{2}( [0, T] ; H^{m}(\T));\\
         \partial_{t} u_{n_{l}}^{\pm} &\rightharpoonup \partial_{t} u^{\pm}\ &&\text{weakly in } L^{2}( [0, T] ; H^{m-1}(\T));\\
         \partial_{t}^{2} u_{n_{l}}^{\pm} &\rightharpoonup \partial_{t}^{2} u^{\pm}\ &&\text{weakly in } L^{2}( [0, T] ; H^{m-2}(\T)).  
     \end{align*}
     In fact, since \( \partial_{t} \eta_{n}^{\pm} = - 2 \sin(\theta) u_{n}^{\pm} \) and \( \eta^{\pm} \) are odd functions, we deduce that
     \begin{align*}
         \eta_{n_{l}}^{\pm} &\rightharpoonup \eta^{\pm} &&\text{weakly in } L^{2}( [0, T] ; H^{m+1}(\T));\\
         \partial_{t} \eta_{n_{l}}^{\pm} &\rightharpoonup \partial_{t} \eta^{\pm} &&\text{weakly in } L^{2}( [0, T] ; H^{m}(\T));\\
         \partial_{t}^{2} \eta_{n_{l}}^{\pm} &\rightharpoonup \partial_{t}^{2} \eta^{\pm} &&\text{weakly in } L^{2}( [0, T] ; H^{m-1}(\T)).  
     \end{align*}
      For simplicity, we set \( n_{l} = n \). Let \(N\) be a fixed positive integer, and choose functions \( \phi^{\pm} \in C^{1} ( [0,T]; C^{2}(\T)) \) such that
	\begin{align}\label{eq:phi}
		\phi^{\pm} (t,\theta) = \sum_{k=1}^{N}\phi_{2,k}^{\pm} (t) \sin(k\theta),
	\end{align}
	where \( \phi_{2,k}^{\pm} (t)\), for  \(k = 1, 2, \ldots, N\), are smooth functions. We choose \( n \geq N\), multiply equation \cref{eq:un positive} by \( \phi_{2,k}^{+} (t) \) and equation \cref{eq:un negative} by \( \phi_{2,k}^{-} (t) \), sum over \( k = 0, 1, 2, \ldots, n\), and integrate with respect to \(t\). We then deduce that 
    \begin{align*}
        \int_{0}^{T} \Big(\left\langle \partial_{t} u_{n}^{\pm} (t), \phi^{\pm} (t) \right\rangle - \left\langle L_{1}^{\pm} (u_{n}^{\pm})(t), \phi^{\pm} (t) \right\rangle \Big)\mathrm d t = 0.
    \end{align*}
    Since the sequence \( \{ u_{n}^{\pm} \}_{n=1}^{\infty} \) converges to \( u^{\pm} \) weakly in \( L^{2} ( [0,T]; H^{m} (\T)) \) and \( \{\partial_{t} u_{n}^{\pm} \}_{n=1}^{\infty} \) converges to \( \partial_{t} u^{\pm} \) weakly in \( L^{2} ( [0,T]; H^{m-1} (\T)) \), it follows that
    \begin{align}\label{eq:int phi}
        \int_{0}^{T} \Big(\left\langle \partial_{t} u^{\pm} (t), \phi^{\pm} (t) \right\rangle - \left\langle L_{1}^{\pm} (u^{\pm}) (t), \phi^{\pm} (t) \right\rangle\Big) \mathrm d t = 0.
    \end{align}
    Since functions of the form \cref{eq:phi} are dense in \( C^{1} ( [0,T]; C^{2} (\T)) \), equation \cref{eq:int phi} holds for all \( \phi^{\pm} \in C^{1} ( [0,T]; C^{2} (\T)) \). Combining the coefficient estimates \crefrange{eq:decay-coeff}{eq:decay-coeff-ddt} for \( \eta_{n}^{\pm} \), we deduce that \( \{ u_{n}^{\pm} \}_{n=1}^{\infty} \) strongly converges to \( u^{\pm} = - \sqrt{\pi} \rho_2^{1/2} \partial_{\theta} \eta^{\pm} \) in \( C ([0,T]; H^{m} (\T)) \). 
    
\uline{Step 4.} \emph{Uniqueness.}
     If both \( \eta_{1}^{\pm} \) and \( \eta_{2}^{\pm} \) are solutions to \cref{eq:linearized} with the same initial data \( \eta_{0}^{\pm} \), then applying Gr\"{o}nwall's inequality yields
     \begin{align*}
         \|\rho_2^{1/2} \partial_{\theta} \eta_{1}^{\pm}-\rho_2^{1/2} \partial_{\theta} \eta_{2}^{\pm} \|_{L^{2}} \leq e^{CT} \| \rho_2^{1/2} \partial_{\theta} \eta_{0}^{\pm} -\rho_2^{1/2} \partial_{\theta} \eta_{0}^{\pm} \|_{L^{2}} = 0,
     \end{align*}
     which implies the uniqueness of the solution to  \cref{eq:linearized}, thereby completing the proof of \cref{the:existence}.
\end{proof}

\section{Instability for the linearized problem}
\label{sec:instability-linearized}

In this section, we study the instability of the linearized problem \cref{eq:linearized} and prove \cref{the:instability_linearized}. 
The solution \(\eta^{\pm}\) to \cref{eq:linearized} obtained in \cref{the:existence} can be expressed as \(\eta^{\pm} (t,\theta) = \sum_{k\geq 1} \eta_{2,k}^{\pm} (t) e_{2,k}\),  using the orthonormal basis \(\{e_{2,k}\}_{k\ge1}\) of the weighted space \(\mathcal{H}_{2}\)  (which is recalled in \cref{ssec:spaces}). The key advantage of this representation is that the inner product \(\left\langle L^{\pm} \eta^{\pm}, \eta^{\pm}\right\rangle_{\mathcal{H}_{2}}\)
 can be expressed as a series involving only the sums of squares of the expansion coefficients \(\eta^{\pm}_{2,k} (t)\), without cross terms such as \(\eta^{\pm}_{2,k} (t) \eta^{\pm}_{2,k+2} (t)\). 

\subsection{Linear estimate on \texorpdfstring{$\eta^{-}$}{eta-}}
\label{ssec:lineareta-}

Direct computation yields
\begin{align*}
    L^{-} \sin(k\theta) = a_{2,k}^{-} \sin((k+2)\theta) + b_{2,k}^{+} \sin((k-2)\theta), \qquad k > 2,
\end{align*}
where
\begin{align*}
    a_{2,k}^{-} \coloneqq -\frac{(k+2)(k-2)}{4k}, \qquad b_{2,k}^{-} \coloneqq \frac{(k+2)^{2}}{4k}.
\end{align*}
For \(k = 1\), we have
\begin{align*}
    L^{-} \sin(\theta) = \frac{3}{4} \sin(3\theta) - \frac{9}{4} \sin(\theta);
\end{align*}
for \(k = 2\), we have
\begin{align*}
    L^{-} \sin(2\theta) = 0.
\end{align*}
It follows that
\begin{align*}
    L^{-} e_{2,k} & =  \frac{1}{k+2} L^{-} \sin((k+2)\theta) - \frac{1}{k} L^{-} \sin(k\theta)\\
     & = \frac{1}{k+2} \left( a_{2,k+2}^{-} \sin((k+4)\theta) + b_{2,k+2}^{-} \sin(k\theta) \right) - \frac{1}{k} \left( a_{2,k}^{-} \sin((k+2)\theta) + b_{2,k}^{-} \sin((k-2)\theta) \right)\\
     & = - \frac{(k+4)^{2}k}{4(k+2)^{2}} \frac{\sin((k+4)\theta)}{k+4} + \frac{(k+4)^{2}k}{4(k+2)^{2}} \frac{\sin(k\theta)}{k} + \frac{(k+2)^{2}(k-2)}{4 k^{2}} \frac{\sin((k+2)\theta)}{k+2}\\
     & \quad - \frac{(k+2)^{2}(k-2)}{4 k^{2}} \frac{\sin((k-2)\theta)}{k-2}\\
     & = - \frac{(k+4)^{2}k}{4(k+2)^{2}} \bigg( \frac{\sin((k+4)\theta)}{k+4} - \frac{\sin((k+2)\theta)}{k+2} \bigg) + \frac{(k+2)^{2}(k-2)}{4 k^{2}} \bigg( \frac{\sin(k\theta)}{k} - \frac{\sin((k-2)\theta)}{k-2}\bigg)\\
     & \quad + \bigg( - \frac{(k+4)^{2}k}{4(k+2)^{2}} + \frac{(k+2)^{2}(k-2)}{4 k^{2}} \bigg) \bigg( \frac{\sin((k+2)\theta)}{k+2} - \frac{\sin(k\theta)}{k} \bigg)\\
     & = - \frac{(k+4)^{2}k}{4(k+2)^{2}} e_{2, k + 2} + \bigg( - \frac{(k+4)^{2}k}{4(k+2)^{2}} + \frac{(k+2)^{2}(k-2)}{4 k^{2}} \bigg) e_{2, k} + \frac{(k+2)^{2}(k-2)}{4 k^{2}} e_{2, k -2},
\end{align*}
i.\,e.,
\begin{align*}
    L^{-} e_{2,k} = - d_{2,k + 2}^{-} e_{2, k + 2} + ( - d_{2,k + 2}^{-} + d_{2,k}^{-} ) e_{2, k } + d_{2,k}^{-} e_{2, k -2},
\end{align*}
where
\begin{align*}
    d_{2,k}^{-} \coloneqq \frac{(k+2)^{2}(k-2)}{4 k^{2}}.
\end{align*}
Since \(d_{2,2}^{-} = 0\), the above equality holds for all \(k \geq 2\). For \(k = 1\), we have
\begin{align*}
    L^{-} e_{2, 1} = - d_{2,3}^{-} e_{2,3} + ( - d_{2,3}^{-} + d_{2,1}^{-} ) e_{2, 1}.
\end{align*}
Expanding
\begin{align*}
    \eta^{-}(t,\theta) = \sum_{k\geq 1} \eta_{2,k}^{-}(t) e_{2,k},
\end{align*}
 then \(\partial_{t}\eta^{-} = L^{-}\eta^{-}\) reduces to the following infinite-dimensional system of ordinary differential equations
\begin{align*}
    \frac{\mathrm{d}}{\mathrm{d}t} \eta_{2, k}^{-} (t) = - d_{2,k}^{-} \eta_{2, k - 2}^{-} (t) + ( d_{2,k}^{-} - d_{2,k+2}^{-} ) \eta_{2, k}^{-} (t) + d_{2,k + 2}^{-} \eta_{2, k + 2}^{-} (t), \quad \text{for \(k\geq 1\).}
\end{align*}
where \( \eta_{2, - 1}^{-} (t)\) and \( \eta_{2, 0}^{-} (t)\) are understood to be zero. Consequently, we deduce (formally)
\begin{align}\label{eq:L- decay}
    \frac{1}{2} \frac{\mathrm{d}}{\mathrm{d}t} \sum_{k\geq 1} (\eta_{2, k}^{-} )^{2} & \leq \sum_{k \geq 1} - d_{2,k}^{-} \eta_{2, k-2}^{-} \eta_{2, k}^{-} + ( d_{2,k}^{-} - d_{2,k + 2}^{-} )(\eta_{2, k}^{-} )^{2} + d_{2,k + 2} \eta_{2, k}^{-} \eta_{2, k + 2}^{-}\nonumber\\
    & = \sum_{k \geq 1} ( d_{2,k}^{-} - d_{2,k + 2}^{-} )(\eta_{2, k}^{-} )^{2}\\
    & \leq - \frac{1}{2} \sum_{k \geq 1} (\eta_{2, k}^{-} )^{2}.\nonumber
\end{align}
Here, we have used the fact that
\begin{align*}
    d_{2,k}^{-} - d_{2,k+2}^{-} & = \frac{(k+2)^{2}(k-2)}{4 k^{2}} - \frac{(k+4)^{2}k}{4(k+2)^{2}}\\
    & = - \frac{1}{2} - \frac{2 k^{2} + 12 k + 8}{(k + 2)^{2} k^{2}}\\
    & < - \frac{1}{2}.
\end{align*}

Several approaches are available to justify the following formal computations (the summations involved may not converge). One option is to invoke the decay estimates for the coefficient \(\eta_{2,k}^{-} (t)\) to ensure convergence. Alternatively, one may appeal to standard linear semigroup theory, as outlined below. 

Let us consider the real Hilbert space \(\mathcal{W}\), formally spanned by the orthonormal basis 
\(\{e_{2,k}\}_{k \geq 1}\), defined as
\begin{align*}
    \mathcal{W} \coloneqq \{ \eta = \eta_{2,k} (t) e_{2,k}: \{e_{2,k}\}_{k \geq 1} \in \ell^{2} \} .
\end{align*}
Then \(L^{-}\) defines an unbounded, closed operator on  \(\mathcal{W}\). By a direct application of  Hille--Yosida's theorem (see \cite[Theorem II.3.8]{MR1721989}), it follows that \(L^{-}\) generates a strongly continuous semigroup with the desired decay estimate:
\begin{align*}
    \| e^{t L^{-}} \eta (0) \|_{\mathcal{W}} \leq e^{- \frac{1}{2} t} \| \eta (0) \|_{\mathcal{W}}, \qquad \text{for \(t \ge 0\)}.
\end{align*}

\subsection{Linear estimate on \texorpdfstring{$\eta^{+}$}{eta+}}
\label{ssec:lineareta+}

Similarly to \cref{ssec:lineareta-}, we have 
\begin{align*}
    L^{+} \sin(k\theta) = a_{2,k}^{+} \sin((k + 2)\theta) + b_{2,k}^{+} \sin((k-2)\theta), \qquad k > 2,
\end{align*}
where 
\begin{align*}
    a_{2,k}^{+} \coloneqq - \frac{(k - 2)^{2}}{4 k}, \qquad b_{2,k}^{+} \coloneqq \frac{(k + 2)(k - 2)}{4 k}.
\end{align*}
For \(k = 1\), we have
\begin{align*}
    L^{+} \sin(\theta) = - \frac{1}{4} \sin(3\theta) + \frac{3}{4} \sin(\theta).
\end{align*}
For \(k = 2\), we have
\begin{align*}
    L^{+} \sin(2\theta) = 0.
\end{align*}
It follows that
\begin{align*}
    L^{+} e_{2, k} & = \frac{a_{2,k + 2}^{+}}{k + 2} \sin((k + 4)\theta) + \frac{b_{k + 2}^{+}}{k + 2} \sin(k\theta) - \frac{a_{k}^{+}}{k} \sin((k + 2)\theta) - \frac{b_{k}^{+}}{k} \sin((k - 2)\theta)\\
    & = - \frac{\kappa^{2} (k + 4)}{4 (k + 2)^{2}} e_{2, k + 2} + \bigg(\frac{(k - 2)^{2} (k + 2)}{4 \kappa^{2}} - \frac{- \kappa^{2} (k + 4)}{4 (k + 2)^{2}}\bigg) e_{2, k} + \frac{(k - 2)^{2} (k + 2)}{4 \kappa^{2}} e_{2, k -2},
\end{align*}
which is
\begin{align*}
    L^{+} e_{2, k} = - d_{2,k + 2}^{+} e_{2, k + 2} + ( - d_{2,k + 2}^{+} + d_{2,k}^{+} ) e_{2, k } + d_{k}^{+} e_{2, k -2}, \qquad \text{for \(k > 2\),}
\end{align*}
where
\begin{align*}
    d_{2,k}^{+} \coloneqq \frac{(k - 2)^{2} (k + 2)}{4 k^{2}}.
\end{align*}
Since \(d_{2,2}^{+} = 0\), the above identity holds for all \(k \geq 2\). In the case \(k = 1\), we obtain the following. 
\begin{align*}
    L^{+} e_{2, 1} = - d_{2,3} e_{2, 3} + (- d_{2,3} + d_{2,1}) e_{2, 1}.
\end{align*}
We note that
\begin{align*}
    d_{2,k}^{+} - d_{2,k + 2}^{+} & =  - \frac{- k^{2} (k + 4)}{4 (k + 2)^{2}} + \frac{(k - 2)^{2} (k + 2)}{4 k^{2}}\\
    & = - \frac{k^{4} + 4 k^{3} + 8 k^{2} - 8 k - 16}{2 (k + 2)^{2} k ^{2}}\\
    & = - \frac{1}{2} - \frac{2 \kappa^{2} - 4 k - 8}{(k + 2)^{2} k^{2}}.
\end{align*}
This implies that \(- d_{2,3}^{+} + d_{2,1}^{+} = \frac{11}{18} > 0\), while, for \(k \geq 2\), we have \(- d_{2,k + 2}^{+} + d_{2,k}^{+} \leq - \frac{3}{8}\).

  \begin{remark}[Stability for a suitable class of initial data]\label{rk:eta+stab}
       Following the discussion in \cite[Section 4]{LLR2020}, let us suppose the initial data \(\eta^{+}_{0} \) is of the form \(\eta^{+}_{0} = \sum_{k\geq 1} \eta^{+}_{2,2k} (0) e_{2,2k}\). In this particular case, we have the following estimate:
        \begin{align*}
            \frac{1}{2} \frac{\mathrm{d}}{\mathrm{d}t} \sum_{k\geq 1} (\eta^{+}_{2,2k})^{2} & \leq \sum_{k \geq 1} - d_{2,2k}^{+} \eta_{2, 2k-2}^{+} \eta_{2, 2k}^{+} + ( d_{2,2k}^{+} - d_{2,2k + 2}^{+} )(\eta_{2, 2k}^{+} )^{2} + d_{2,2k + 2}^{+} \eta_{2, 2k}^{+} \eta_{2, 2k + 2}^{+}\\
            & = \sum_{k \geq 1} ( d_{2,2k}^{+} - d_{2,2k + 2}^{+} )(\eta_{2, 2k}^{+} )^{2}\\
            & \leq - \frac{3}{8} \sum_{k \geq 1} (\eta_{2, 2k}^{-} )^{2},
        \end{align*}
        where we have used 
        \begin{align*}
            d_{2,2k}^{+} - d_{2,2k + 2}^{+} & = - \frac{(2k)^{2} (2k + 4)}{4 (2k + 2)^{2}} + \frac{(2k - 2)^{2} (2k + 2)}{4 (2k)^{2}}\\
            & = - \frac{k^{2} (k + 2)}{2 (k + 1)^{2}} + \frac{(k - 1)^{2} (k + 1)}{2 k^{2}} \\
            & = - \frac{1}{2} - \frac{ k^{2} -  k - 1}{2 (k + 1)^{2} k^{2}} \leq -\frac{3}{8}.
        \end{align*}
        This implies exponential decay of \(\eta^{+}\) in \(\mathcal{H}_{2}\) for such initial data.
        
    \end{remark}

\subsection{Proof of the instability result for the linearized problem}

Building on \crefrange{ssec:lineareta-}{ssec:lineareta+}, we now establish \cref{the:instability_linearized}. 

While $\eta^-$ satisfies an exponential-decay estimate (see \cref{ssec:lineareta-}), the situation of $\eta^+$ is more delicate since the coefficients \(d_{k}^{+} - d_{k+2}^{+}\) arising in \cref{ssec:lineareta+} change sign. If the odd Fourier coefficients of $\eta_0^+$ vanish, then exponential stability holds (as proven in \cref{rk:eta+stab}). In the general case, we first show that the expansion coefficients \(\eta^{+}_{2,k} (t)\) of the solution \(\eta^{+} (t,\theta) = \sum_{k\geq 1} \eta_{2,k}^{+}(t) e_{2,k}\), obtained in \cref{the:existence}, satisfy the infinite-dimensional ODE system \cref{eq:eta positive ODE}; we then derive a second-order differential inequality \cref{eq:eta diff ineq} for the norm \(\|\eta^{+} (t) \|_{\mathcal{H}_{2}}\) which yields exponential-in-time growth  for a broad class of initial data, implying the instability of \cref{eq:linearized}. 

Depending on these different choices of the initial data \(\eta_{0}^{+}\), we later analyze the stability and instability of the nonlinear system \cref{eq:perturbations2} as well (in \cref{sec:instability-nonlinear} and \cref{sec:stability-nonlinear}, respectively).

\begin{proof}[Proof of \cref{the:instability_linearized}]
 
 Consider the solution \( ( \eta^{+}, \eta^{-} ) \) obtained in \cref{the:existence}. Here, \( \eta^{+} \) admits the expansion 
 \begin{align*}
     \eta^{+} (t, \theta) = \sum_{k \geq 1} \eta_{2, k}^{+} (t) e_{2, k}.
 \end{align*}
  Note that the coefficients \(\eta_{2, k}^{+} (t)\) appearing here may differ from those in \cref{eq:etan expression}, the decay estimates stated in \cref{eq:decay-coeff}–\cref{eq:decay-coeff-ddt} remain valid for both sets of coefficients, due to the convergence of the approximate solutions and the uniqueness of the solutions.  
  
  \uline{Step 1.} \emph{The infinite dimensional ODE system for \(\eta^{+}\).}
  Owing to \crefrange{eq:decay-coeff}{eq:decay-coeff-ddt}, then \(\partial_{t} \eta^{+} = L^{+} \eta^{+}\) reduces to the following infinite-dimensional ordinary differential equation (ODE) system
  \begin{align}\label{eq:eta positive ODE}
      \frac{\mathrm d}{\mathrm dt} \eta_{2,k}^{+} (t) = - d_{2,k}^{+} \eta_{2, k-2}^{+} (t) + ( d_{2,k}^{+} - d_{2,k+2}^{+} ) \eta_{2,k}^{+} (t) + d_{2,k+2}^{+} \eta_{2, k+2}^{+} (t), \qquad \text{for \(k \geq 1 \).}
  \end{align}
 Here, \( \eta_{2, -1}^{+} (t) \) and \( \eta_{2, 0}^{+} (t) \) are understood to be 0.

  \uline{Step 2.} \emph{The quadratic form on \( \eta_{2, k}^{+}\) and \( \eta_{2, k + 2}^{+} \).}
  In this step, we show that the non-trivial solution \(\eta^{+} \) obtained in \cref{the:existence} satisfies a second-order ordinary differential inequality. By applying the decay estimate for the coefficients of \( \eta^{+} (t, \theta) \) given in \crefrange{eq:decay-coeff}{eq:decay-coeff-ddt}, we obtain
  \begin{align*}
       \left| \frac{\mathrm d}{\mathrm dt} \left( d_{2,k}^{+} - d_{2,k+2}^{+} \right) \left( \eta_{2, k}^{+} (t) \right)^{2} \right| \leq \frac{C}{k^{2m-1}}, \qquad \text{for all \(t>0\), \( k \geq 3 \), and any integer \( m >3 \). }
  \end{align*}
   Consequently, we have
  \begin{align*}
      \frac{\mathrm d}{\mathrm dt} \left\langle L^{+} \eta^{+}, \eta^{+} \right\rangle_{\mathcal{H}_{2}} & = \frac{\mathrm d}{\mathrm dt} \sum_{k\geq 1} \left( d_{2,k}^{+} - d_{2,k+2}^{+} \right) \left( \eta_{2,k}^{+} (t) \right)^{2}\\
      & = \sum_{k\geq 1} \frac{\mathrm d}{\mathrm dt} \left( d_{2,k}^{+} - d_{2,k+2}^{+} \right) \left( \eta_{2, k}^{+} (t)\right)^{2}.
  \end{align*}
  We multiply \cref{eq:eta positive ODE} by \( ( d_{2,k}^{+} - d_{2,k+2}^{+} ) \eta_{2,k}^{+} (t)\) to obtain
  \begin{align}\label{eq:eta positive ODEd}
     \begin{aligned}
      \frac{1}{2} \frac{\mathrm d}{\mathrm dt} ( d_{2,k}^{+} - d_{2,k+2}^{+} ) \left( \eta_{2,k}^{+} (t) \right)^{2} & = - d_{2,k}^{+} ( d_{2,k}^{+} - d_{2,k+2}^{+} ) \eta_{2, k-2}^{+} (t) \eta_{2, k}^{+} (t) \\
			& \quad + ( d_{2,k}^{+} - d_{2,k+2}^{+} )^{2} \left( \eta_{2, k}^{+} (t) \right)^{2}\\
			& \quad + d_{2,k+2}^{+} ( d_{2,k}^{+} - d_{2,k+2}^{+} ) \eta_{2, k}^{+} (t) \eta_{2, k+2}^{+} (t).
            \end{aligned}
  \end{align}
  Considering the sum
  \begin{align*}
      S_{n}^{+} (t) \coloneqq \sum_{k=1}^{n} \frac{\mathrm d}{\mathrm dt} \left( d_{2,k}^{+} - d_{2,k+2}^{+} \right) \left(\eta_{2,k}^{+} (t) \right)^{2}
  \end{align*}
  and using \cref{eq:eta positive ODEd}, we get
  \begin{align}\label{eq:Sn}
  \begin{aligned}
      S_{n}^{+} (t) & \coloneqq \left( d_{2,1}^{+} - d_{2,3}^{+} \right)^{2} \left( \eta_{2, 1}^{+} (t) \right)^{2}  + \left( d_{2,2}^{+} - d_{2,4}^{+} \right)^{2} \left(\eta_{2, 2}^{+} (t) \right)^{2} + \sum_{k=1}^{n-2} Q_{k} (t)  \\
      & \quad + \left( d_{2,n-1}^{+} - d_{2,n+1}^{+} \right)^{2} \left( \eta_{2, n-1}^{+} (t) \right)^{2}  + \left( d_{2,n} - d_{2,n+2}^{+} \right)^{2} \left( \eta_{2,n}^{+} (t) \right)^{2} \\
      & \quad + R_{n-1} (t) + R_{n} (t).
      \end{aligned}
  \end{align}
  Here, the quadratic form  \( Q_{k} (t) \) is defined as
  \begin{align*}
      Q_{k}(t) & \coloneqq \left( d_{2,k}^{+} - d_{2,k+2}^{+} \right)^{2} \left( \eta_{2, k}^{+} (t) \right)^{2} + \left( d_{2,k+2}^{+} - d_{2,k+4}^{+} \right)^{2} \left( \eta_{2, k+2}^{+} (t) \right)^{2}\\
      & \quad + 2 \left( d_{2,k}^{+} d_{2,k+2}^{+} + d_{2,k+2}^{+} d_{2,k+4}^{+} - 2 (d_{2,k+2}^{+})^{2} \right ) \eta_{2, k}^{+} (t) \eta_{2, k+2}^{+} (t), \quad \text{for \( k = 1, 2, \ldots, n - 2\).}
  \end{align*}
   The remainder term \( R_{k} (t) \) is given by
  \begin{align*}
      R_{k}(t) \coloneqq 2 d_{2,k+2}^{+} \left( d_{2,k}^{+} - d_{2,k+2}^{+} \right) \eta_{2,k}^{+} (t) \eta_{2, k+2}^{+} (t), \quad \text{for \( k = n - 1,\, n\)}
  \end{align*}
 Applying the estimates for coefficients \( \eta_{2,k}^{+} (t) \) in \crefrange{eq:decay-coeff}{eq:decay-coeff-ddt}, we obtain
  \begin{align}\label{eq:estimate-Rk}
      |R_{k} (t) |\leq \frac{C(m,\T,\eta^+_0)}{k^{2m-1}}, \qquad \text{for all $t>0$.}
  \end{align}
     Consequently, both \( R_{n-1} (t) \) and \( R_{n} (t)\) tend to zero as \( n \rightarrow + \infty \). 
     
     For the estimate of the quadratic form  \( Q_{k} (t) \), we have
  \begin{align}\label{eq:estimate-Qf}
  \lambda_{\inf} \left[ \left( \eta_{2,k}^{+} (t) \right)^{2} + \left( \eta_{2, k+2}^{+} (t) \right)^{2} \right]\leq Q_{k} (t) \leq \lambda_{\sup} \left[ \left( \eta_{2, k}^{+} (t) \right)^{2} + \left( \eta_{2, k+2}^{+} (t) \right)^{2} \right], \quad \text{for \( k \geq 1 \).}
  \end{align}
  Substituting \cref{eq:estimate-Qf} into \cref{eq:Sn} yields 
  \begin{align}\label{eq:estimate-Snleq}
  \begin{aligned}
      S_{n}^{+} (t) & \leq \left( d_{2,1}^{+} - d_{2,3}^{+} \right)^{2} \left( \eta_{2,1}^{+} (t) \right)^{2} + \left( d_{2,2}^{+} - d_{2,4}^{+} \right)^{2} \left( \eta_{2,2}^{+} (t) \right)^{2} \\ & \quad + \sum_{k=1}^{n-2} \lambda_{\sup} \left[ \left(\eta_{2,k}^{+} (t) \right)^{2} + \left( \eta_{2,k+2}^{+} (t) \right)^{2} \right] \\
      & \quad + \left( d_{2,n-1}^{+} - d_{2,n+1}^{+} \right)^{2} \left(\eta_{2,n-1}^{+} (t) \right)^{2} + \left( d_{2,n}^{+} - d_{2,n+2}^{+} \right)^{2} \left( \eta_{2,n}^{+} (t) \right)^{2} + R_{n-1} (t) + R_{n} (t)  \\
      & \leq 2 \lambda_{\sup} \sum_{k=1}^{n} \left(\eta_{2,k}^{+} (t) \right)^{2} + R_{n-1} (t) + R_{n} (t)
      \end{aligned}
  \end{align}
  and 
  \begin{align}\label{eq:estimate-Sngeq}
      S_{n}^{+} (t) \geq 2 \sum_{k=1}^{n} \lambda_{\inf} \left( \eta_{2,k}^{+} (t) \right)^{2}  + R_{n-1} (t) + R_{n} (t).
  \end{align}
  Combining with \cref{eq:estimate-Snleq} and \cref{eq:estimate-Sngeq}, and letting \( n \rightarrow + \infty \), we obtain a second-order ordinary differential inequality for \(t \mapsto \|\eta^{+} (t) \|_{\mathcal{H}_{2}}\):
  \begin{align}\label{eq:eta diff ineq}
      4 \lambda_{1} \| \eta^{+} (t) \|_{\mathcal{H}_{2}}^{2} < \frac{\mathrm d^{2}}{\mathrm dt^{2}} \| \eta^{+} (t) \|_{\mathcal{H}_{2}}^{2} < 4 \lambda_{2} \| \eta^{+} (t) \|_{\mathcal{H}_{2}}^{2},
  \end{align}
   where we have used the decay property of \( R_{k} (t) \) in \cref{eq:estimate-Rk} for \( k = 1, 2, \ldots \), and \( 0 < \lambda_{1} < \lambda_{2} \) are two absolute constants as in \cref{le:lambda}.

    \uline{Step 3.} \emph{The second-ordinary differential inequality for \(\eta^{+} \).}
    In this step, we apply the comparison result in \cref{le:comparison the} to the differential inequality \cref{eq:eta diff ineq}. To begin with, we first consider the following second-order ordinary differential equation
    \begin{align}\label{eq:u diff ode}
        \begin{cases}
             \frac{\mathrm d^{2}}{\mathrm dt^{2}} u (t) = 4 \lambda_{i} u (t), \qquad  & t \geq 0,\\
             u (0) = 0, \\ \frac{\mathrm d}{\mathrm dt} u (0) = 1, &
        \end{cases} \qquad \text{(for \( i = 1,\, 2\)).}
    \end{align}
 The solution to \cref{eq:u diff ode} is given explicitly by
    \begin{align*}
        u(t) = \frac{1}{4\sqrt{\lambda_{i}}}e^{2\sqrt{\lambda_{i}}t} - \frac{1}{4\sqrt{\lambda_{i}}}e^{-2\sqrt{\lambda_{i}}t} > 0, \quad \text{for all \( t > 0 \) and \( i = 1,\, 2\).}
    \end{align*}
    We now proceed to solve the following problem:
    \begin{align}\label{eq:y diff}
        \begin{cases}
           \frac{\mathrm d^{2}}{\mathrm dt^{2}} y (t) = 4 \lambda_{i} y(t),  \qquad  & t\geq 0,\\
           y (0) = \left\langle \eta_{0}^{+}, \eta_{0}^{+} \right\rangle_{\mathcal{H}_{2}}, \\ \frac{\mathrm d}{\mathrm dt} y(0) = 2 \left\langle L^{+} \eta_{0}^{+}, \eta_{0}^{+} \right\rangle_{\mathcal{H}_{2}} &
        \end{cases} \qquad \qquad    \text{(for \( i = 1,\, 2\)).}
    \end{align}
 The solution to \cref{eq:y diff} is given explicitly by
    \begin{align*}
        y (t) = E_{i} (t) &\coloneqq \frac{\left\langle \eta_{0}^{+}, \eta_{0}^{+} \right\rangle_{\mathcal{H}_{2}} + \frac{1}{\sqrt{ \lambda_{i}}} \left\langle L^{+} \eta_{0}^{+}, \eta_{0}^{+} \right\rangle_{\mathcal{H}_{2}}}{2} e^{ 2 \sqrt{ \lambda_{i} }t} \\ & \quad + \frac{\left\langle \eta_{0}^{+}, \eta_{0}^{+} \right\rangle_{\mathcal{H}_{2}} - \frac{1}{\sqrt{ \lambda_{i}}}\left\langle L^{+} \eta_{0}^{+}, \eta_{0}^{+} \right\rangle_{\mathcal{H}_{2}}}{2} e^{-{2 \sqrt{ \lambda_{i} }t}},\qquad \text{(for \( i = 1,\, 2\)).}
    \end{align*}
    For \( t > 0\) and \( i = 1,\, 2\), we have
    \begin{align*}
        E_{i} (t) &= \frac{1}{2} \left\langle \eta_{0}^{+}, \eta_{0}^{+} \right\rangle_{ \mathcal{H}_{2} } \cosh( 2 \sqrt{\lambda_{i}} t ) + \frac{1}{2 \sqrt{\lambda_{i}}} \left\langle L^{+} \eta_{0}^{+}, \eta_{0}^{+} \right\rangle_{ \mathcal{H}_{2} } \sinh ( 2 \sqrt{\lambda_{i}} t ) \\ & > 0,
    \end{align*}
    which remains strictly positive provided that  
    \[ 
    \langle \eta_0^+, \eta_0^+\rangle_{\mathcal H_2} > 0 \quad \text{(i.\,e.,  $\eta_{0}^{+} \neq 0$)} \qquad \text{and} \qquad \left\langle L^{+} \eta_{0}^{+}, \eta_{0}^{+} \right\rangle_{ \mathcal{H}_{2} } \geq - \sqrt{ \lambda_{i} } \left\langle \eta_{0}^{+}, \eta_{0}^{+} \right\rangle_{ \mathcal{H}_{2}} >0.\]

Since 
    \begin{align*}
        \cosh (x) = \frac{ e^{x} + e^{- x} }{2},\qquad  \sinh (x) = \frac{ e^{x} - e^{- x} }{2},
    \end{align*}
    and both \( x \mapsto \cosh (x) \) and \(x \mapsto \tfrac{\sinh (x)}{x}\) are strictly increasing functions for \( x > 0\), we deduce that,  for all \( t > 0 \),
    \begin{align*}
        E_{2} (t) - E_{1} (t) & = \frac{1}{2} \left\langle \eta_{0}^{+}, \eta_{0}^{+} \right\rangle_{ \mathcal{H}_{2} } \cosh (2 \sqrt{\lambda_{2}} t) + \frac{1}{2 \sqrt{\lambda_{2} }} \left\langle L^{+} \eta_{0}^{+}, \eta_{0}^{+} \right\rangle_{ \mathcal{H}_{2} } \sinh (2 \sqrt{\lambda_{2} } t)\\
        & \quad - \frac{1}{2} \left\langle \eta_{0}^{+}, \eta_{0}^{+} \right\rangle_{ \mathcal{H}_{2} } \cosh (2 \sqrt{\lambda_{1}} t ) - \frac{1}{2 \sqrt{ \lambda_{1} } } \left\langle L^{+} \eta_{0}^{+}, \eta_{0}^{+} \right\rangle_{ \mathcal{H}_{2} } \sinh (2 \sqrt{ \lambda_{1} } t)\\
        & = \frac{1}{2} \left\langle \eta_{0}^{+}, \eta_{0}^{+} \right\rangle_{ \mathcal{H}_{2} } \left( \cosh (2 \sqrt{ \lambda_{2} } t) - \cosh ( 2 \sqrt{ \lambda_{1} } t )\right)\\
        & \quad + \frac{1}{2} \left\langle L^{+} \eta_{0}^{+}, \eta_{0}^{+} \right\rangle_{\mathcal{H}_{2}} \left( \frac{1}{\sqrt{\lambda_{2}}} \sinh (2 \sqrt{\lambda_{2}} t ) - \frac{1}{\sqrt{\lambda_{1}}} \sinh (2 \sqrt{\lambda_{1}} t ) \right) \\ &> 0,
    \end{align*}
    provided that \(\langle \eta_{0}^{+}, \eta_0^+\rangle_{\mathcal H_2} > 0\) (i.\,e., $\eta_0^+ \neq 0$) and  \( \left\langle L^{+} \eta_{0}^{+}, \eta_{0}^{+} \right\rangle_{ \mathcal{H}_{2} } \geq 0 \). As a consequence, we obtain
    \begin{align*}
        E_{1}^{1/2} (t) < \|\eta^{+} (t)\|_{\mathcal{H}_{2}} < E_{2}^{1/2} (t), \qquad \text{ for \(0 < t < +\infty\).}
    \end{align*}

    \end{proof}

  \begin{remark}[Higher excited states on the torus]\label{rk:arbitrarykappa}
Dealing with case of arbitrary \(\kappa \ge 3\) is much more involved. Since the linear structure of \(\eta^{+}\) is identical to that of the De Gregorio model, we illustrate the difficulties using \cref{eq:DG1}. 

Let us define
\begin{align*}
    \omega (t,\theta) &\coloneqq -\sin(\kappa\,\theta) + \eta (t,\theta) , && t >0, \ \theta \in \T, \\
    u(t,\theta) &\coloneqq \frac{1}{\kappa}\sin(\kappa\,\theta) + v (t,\theta), && t >0, \ \theta \in \T, 
\end{align*}
Then \cref{eq:DG1} can be rewritten as a partial differential equation in \(\eta\) as follows:
\begin{align}\label{eq:eta k}
\begin{cases}
\partial_t \eta  = \{\frac{1}{\kappa} \eta + v, \sin(\kappa\theta)\} +  \{\eta, v\}, & t >0, \ \theta \in \T, \\
\partial_{\theta}v  = H\eta, & t >0, \ \theta \in \T,
\end{cases} 
\end{align}
For any fixed integer \(\kappa\geq 1\), define
\begin{align*}
    \mathcal{H}_{\kappa} \coloneqq  \left\{\eta \in H^{1} (\mathbb{T}) : \ \eta \text{ is odd} \quad \text{ and } \quad  \int_{\mathbb{T}} \frac{|\partial_{\theta}\eta|^{2}}{|\sin\left(\frac{\kappa}{2}\theta\right)|^{2}} \, \mathrm d \theta < \infty\right\},
\end{align*}
which is a Hilbert space equipped with the inner product
\begin{equation}
\langle \xi, \eta \rangle_{\mathcal H_{\kappa}} \coloneqq \int_{-\pi}^{\pi} \rho_{\kappa} \, \partial_\theta \xi \, \partial_\theta \eta \, \mathrm d\theta, \qquad \text{where } \ \rho_{\kappa} \coloneqq \frac{1}{4\pi \sin^2 (\frac{\kappa}{2}\theta)}.
\end{equation}
The family \(\{e_{\kappa,l}\}_{l\in\mathbb{Z}_{+}}\) forms a complete orthonormal basis for \(\mathcal{H}_{\kappa}\), where
\begin{align*}
    e_{\kappa,l} \coloneqq \frac{\sin((l+\kappa)\theta)}{l+\kappa} - \frac{\sin(l\theta)}{l}, \qquad l\geq 1.
\end{align*}
For a fixed integer \(\kappa\,\geq 1\), direct computation yields 
\begin{align*}
    L_{\kappa} \sin(l\theta) = \begin{cases} A_{\kappa,l} \sin((l+\kappa)\theta) + B_{\kappa,l} \sin((l-\kappa)\theta), & \text{if } l > \kappa,\\
    0, &\text{if } l = \kappa, \\ 
     A_{\kappa,l} \sin((l+\kappa)\theta) - B_{\kappa,l} \sin((\kappa-l)\theta), & \text{if } l < \kappa,
    \end{cases}
\end{align*}
where
\begin{align*}
    A_{\kappa,l} \coloneqq - \frac{(l-\kappa)^{2}}{2\kappa \, l}, \qquad B_{\kappa,l} \coloneqq \frac{(l+\kappa)(l-\kappa)}{2\kappa \, l}.
\end{align*}
It follows that 
\begin{align*}
    L_{\kappa} e_{\kappa,l} & = \frac{1}{l+\kappa} L_{\kappa} \sin((l+\kappa)\theta) - \frac{1}{l} L_{\kappa} \sin(l\theta)\\
    & = \frac{A_{\kappa,l+\kappa}}{l+\kappa} \sin((l+2\kappa)\theta) + \frac{B_{\kappa,l+\kappa}}{l+\kappa} L_{\kappa} \sin(l\theta) - \frac{A_{\kappa,l}}{l} \sin((l+\kappa)\theta) - \frac{B_{\kappa,l}}{l} L_{\kappa} \sin((l-\kappa)\theta)\\
    & = - \frac{l^{2} (l+2\kappa)}{2k(l+\kappa)^{2}} \frac{\sin((l+2\kappa)\theta)}{l+2\kappa} + \frac{l^{2} (l+2\kappa)}{2k(l+\kappa)^{2}} \frac{\sin(l\theta)}{l} + \frac{(l-\kappa)^{2} (l+\kappa)}{2\kappa \, l^{2}} \frac{\sin((l+\kappa)\theta)}{l+\kappa} \\
    & \quad - \frac{(l-\kappa)^{2} (l+\kappa)}{2\kappa \, l^{2}} \frac{\sin((l-\kappa)\theta)}{l-\kappa}\\
    & =  - \frac{l^{2} (l+2\kappa)}{2k(l+\kappa)^{2}} \bigg(\frac{\sin((l+2\kappa)\theta)}{l+2\kappa} - \frac{\sin((l+\kappa)\theta)}{l+\kappa}\bigg) - \frac{l^{2} (l+2\kappa)}{2k(l+\kappa)^{2}}\frac{\sin((l+\kappa)\theta)}{l+\kappa}\\
    & \quad + \frac{l^{2} (l+2\kappa)}{2k(l+\kappa)^{2}} \frac{\sin(l\theta)}{l} + \frac{(l-\kappa)^{2} (l+\kappa)}{2\kappa \, l^{2}} \frac{\sin((l+\kappa)\theta)}{l+\kappa}\\
    & \quad + \frac{(l-\kappa)^{2} (l+\kappa)}{2\kappa \, l^{2}} \bigg( \frac{\sin(l\theta)}{l} - \frac{\sin((l-\kappa)\theta)}{l-\kappa}\bigg) - \frac{(l-\kappa)^{2} (l+\kappa)}{2\kappa \, l^{2}} \frac{\sin(l\theta)}{l}\\
    & = - \frac{l^{2} (l+2\kappa)}{2k(l+\kappa)^{2}} e_{\kappa,l+\kappa} + \bigg[\frac{(l-\kappa)^{2} (l+\kappa)}{2\kappa \, l^{2}} - \frac{l^{2} (l+2\kappa)}{2k(l+\kappa)^{2}}\bigg] e_{\kappa,l} + \frac{(l-\kappa)^{2} (l+\kappa)}{2\kappa \, l^{2}} e_{\kappa,l-\kappa}
\end{align*}
i.\,e., 
\begin{align*}
    L_{\kappa} e_{\kappa,l} & \coloneqq d_{\kappa,l+\kappa} e_{\kappa,l+\kappa} + \big( d_{\kappa,l} - d_{\kappa,l+\kappa}\big) e_{\kappa,l} + d_{\kappa,l} e_{\kappa,l-\kappa}, \qquad \text{for \(l\geq 1\),}
\end{align*}
 where
\begin{align*}
    d_{\kappa,l} \coloneqq \frac{(l-\kappa)^{2} (l+\kappa)}{2\kappa \, l^{2}}.
\end{align*}
Formally, if the solution \(\eta  \) to \cref{eq:eta k} can be written as \(\eta (t,\theta) = \sum_{l\geq 1} \eta_{\kappa,l} (t) e_{\kappa,l}\), then
\begin{align*}
    \frac{1}{2} \frac{\mathrm{d}}{\mathrm{d} t} \sum_{l \geq 1} \left(\eta_{\kappa,l} (t)\right)^{2} = \left\langle L_{\kappa} \eta, \eta\right\rangle_{\mathcal{H}_{\kappa}} = \sum_{l\geq 1} \left(d_{\kappa,l} - d_{\kappa,l+\kappa} \right) \left( \eta_{\kappa,l} (t)\right)^{2},
\end{align*}
where
\begin{align*}
    d_{\kappa,l} - d_{\kappa,l+\kappa} & = \frac{(l-\kappa)^{2} (l+\kappa)}{2\kappa \, l^{2}} - \frac{l^{2} (l+2\kappa)}{2k(l+\kappa)^{2}}\\
    & = \frac{-l^{4} - 2\kappa \, l^{3} - 2\kappa^{2}l^{2} + \kappa^{3}l + \kappa^{4}}{2l^{2}(l+\kappa)^{2}}\\
    & = - \frac{1}{2} + \frac{\kappa^{2}(-l^{2} + \kappa \, l + \kappa^{2})}{2l^{2}(l+\kappa)^{2}}. 
\end{align*}
Since the coefficients \(d_{\kappa,l} - d_{\kappa,l+\kappa}\) change signs for fixed integer \(\kappa\geq 3\), a finer analysis is required. 

We will address this problem (for both the De Gregorio model \cref{eq:DG1} and the one-dimensional MHD model \cref{eq:MHD1}) in a forthcoming work.  
  \end{remark}

  \begin{remark}[The case $q \neq0$]\label{rk:qneq0} 
  
  When considering the range of parameters \cref{ass:q=0} (namely,  the case $q \neq 0$), the linear equation for \(\eta^{-}\) in \cref{eq:perturbations} and the nonlinearity in \cref{def:nonlinear2} have additional terms to deal with.
  
  The main difficulties arise from the study of the operator \(Q\) in \cref{def:operator2}.  We stress that a similar difficulty arises in the proof of \cite[Theorem 1.2]{sun2025} (cf.~\cite[Section 5]{sun2025}); however, unlike the approach in \cite{sun2025}, where weighted estimates are performed in the function space 
   \begin{align*}
       \mathcal{H}_1 \coloneqq \bigg\{f\in H^{1} (\T) : \ f(0) = 0 \quad \text{and} \quad\int_{\T} \frac{|\partial_\theta f|^{2}}{|\sin(\theta/2)|^{2}} \, \mathrm{d}\theta < \infty \bigg\},
   \end{align*}
   (where the singular weight is given by \((\sin(\theta/2))^{-2}\)),
   our study avoids the classical kernel space decomposition \(\operatorname{span}_{\mathbb{R}}\{\sin(\theta)\} \oplus \mathcal{H}_1\) and instead considers the boundedness of the operator \(Q\) in the weighted space \(\mathcal{H}_{2}\) (which is endowed with a stronger singular weight function, \((\sin(\theta))^{-2}\), featuring singularities at both endpoints \(\theta = 0\) and \(\theta = \pm \pi\)), requiring a more delicate analysis of the nonlocal operator structure.

  We begin by rewriting the operator $Q$ in a form that makes its nonlocal structure explicit
  \begin{align*}
      Qf & \coloneqq \sin(2\theta) Hf + \cos(2\theta) f\\
      & = \cos(2\theta) f + \sin(2\theta) \left[Hf(\theta) - Hf(-\theta)\right] + \sin(2\theta) Hf(-\theta).
  \end{align*}
  To handle the difference \(Hf(\theta) - Hf(-\theta)\), we observe that
  \begin{align*}
      Hf(\theta) - Hf(-\theta) & = \mathrm{p.\,v.\,}\frac{1}{2\pi}\int_{-\pi}^{\pi} \cot\left(\frac{\theta - \varphi}{2}\right) f(\varphi) \mathrm{d}\varphi - \mathrm{p.\,v.\,} \frac{1}{2\pi}\int_{-\pi}^{\pi} \cot\left(\frac{-\theta - \varphi}{2}\right) f(\varphi) \, \mathrm{d}\varphi \\
      & = \mathrm{p.\,v.\,} \frac{1}{2\pi}\int_{-\pi}^{\pi} \cot\left(\frac{\theta - \varphi}{2}\right) f(\varphi) \mathrm{d}\varphi + \mathrm{p.\,v.\,} \frac{1}{2\pi}\int_{-\pi}^{\pi} \cot\left(\frac{\theta + \varphi}{2}\right) f(\varphi) \,\mathrm{d}\varphi\\
      & = \mathrm{p.\,v.\,}\frac{1}{2\pi} \int_{-\pi}^{\pi} \left[\cot\left(\frac{\theta - \varphi}{2}\right) + \cot\left(\frac{\theta + \varphi}{2}\right)\right] f(\varphi) \,\mathrm{d}\varphi\\
      & = \frac{1}{2\pi} \sin(\theta) \, \mathrm{p.\,v.\,} \int_{-\pi}^{\pi} \frac{\sin(\varphi)}{\sin\left(\frac{\theta + \varphi}{2}\right) \sin\left(\frac{\theta - \varphi}{2}\right)} \frac{f(\varphi)}{\sin(\varphi)} \,\mathrm{d}\varphi.
  \end{align*}
   Applying this identity, together with H\"{o}lder's inequality, \cref{le:hardy type}, and \cref{le:hilbert estimate}, we obtain 
  \begin{align*}
      \int_{\T} \bigg|\frac{\partial_{\theta} (Qf)}{\sin(\theta)}\bigg|^{2} \mathrm{d}\theta & \leq \int_{\T} \bigg|\frac{\cos(2\theta) \partial_{\theta}f - 2\sin(2\theta) f}{\sin(\theta)}\bigg|^{2} \, \mathrm{d}\theta + \int_{\T} \bigg|\frac{2\cos(2\theta) H f(-\theta) + \sin(2\theta) H(\partial_{\theta} f) (-\theta)}{\sin(\theta)}\bigg|^{2} \, \mathrm{d}\theta\\
      & \quad + \int_{\T} \bigg|\frac{2\cos(2\theta)\left[Hf(\theta) - Hf(-\theta)\right] + \sin(2\theta) \left[H (\partial_{\theta} f) (\theta) - H (\partial_{\theta}f) (-\theta)\right]}{\sin(\theta)}\bigg|^{2} \, \mathrm{d}\theta\\
      & \lesssim \int_{\T} \bigg|\frac{\cos(2\theta) \partial_\theta f - 2\sin(2\theta) f}{\sin(\theta)}\bigg|^{2} \mathrm{d}\theta + \int_{\T} \bigg| \frac{\sin(2\theta) H \partial_\theta f}{\sin(\theta)}\bigg|^{2} \,\mathrm{d}\theta\\
      & \quad + \int_{\T} \bigg|\frac{\cos(2\theta) \left[Hf(\theta) - Hf(-\theta)\right]}{\sin(\theta)}\bigg|^{2} \,\mathrm{d}\theta \\
      & \lesssim \|f\|_{\mathcal{H}_{2}}^{2} + \|H \partial_\theta f\|_{L^{2}}^{2} + \bigg\|\frac{f}{\sin(\theta)}\bigg\|_{L^{\infty}}^{2} + \int_{\T} \bigg| \mathrm{p.\,v.\,} \int_{-\pi}^{\pi} \frac{\sin(\varphi)}{\sin\left(\frac{\theta + \varphi}{2}\right) \sin\left(\frac{\theta - \varphi}{2}\right)} \frac{f(\varphi)}{\sin(\varphi)} \, \mathrm{d}\varphi \bigg|^{2} \, \mathrm{d}\theta \\
       & \lesssim \|f\|_{\mathcal{H}_{2}}^{2} + \|H \partial_\theta f\|_{L^{2}}^{2} + \bigg\|\frac{f}{\sin(\theta)}\bigg\|_{L^{\infty}}^{2} \\
      & \lesssim \|f\|_{\mathcal{H}_{2}}^{2}, \qquad \text{for any \(f\in \mathcal{H}_{2}\)},
  \end{align*}
where we used that 
  \begin{align*}
      \int_{\T} \bigg| \mathrm{p.\,v.\,} \int_{-\pi}^{\pi} \frac{\sin(\varphi)}{\sin\left(\frac{\theta + \varphi}{2}\right) \sin\left(\frac{\theta - \varphi}{2}\right)} \frac{f(\varphi)}{\sin(\varphi)} \, \mathrm{d}\varphi \bigg|^{2} \, \mathrm{d}\theta \lesssim \bigg\|\frac{f}{\sin(\theta)}\bigg\|_{L^{\infty}}^{2}
  \end{align*}
  noticing that
  \begin{align*}
      [-\pi,\pi] \ni\theta \mapsto \mathrm{p.\,v.\,} \int_{-\pi}^{\pi} \frac{\sin(\varphi)}{\sin\left(\frac{\theta + \varphi}{2}\right) \sin\left(\frac{\theta - \varphi}{2}\right)} \, \mathrm{d}\varphi
  \end{align*}
is well-defined and finite.\footnote{~Indeed, the integrand exhibits singularities at \(\varphi = \pm \theta\), where the denominator vanishes, and from these points, it is smooth and bounded. A local expansion near these points shows that the integrand behaves like a Cauchy-type kernel of order \(\frac{1}{(\varphi \mp \theta)}\), but such singularities are integrable in the \textit{Cauchy principal value} sense (cf.~\cite[Chapter 3, Section 8]{MR4703940}).}
  
  Combining the above estimates, we conclude that
  \begin{align*}
      \|Q f\|_{\mathcal{H}_{2}} \lesssim \|f\|_{\mathcal{H}_{2}},
  \end{align*}
  which implies that \(Q: \mathcal{H}_{2} \to \mathcal{H}_{2}\) is bounded.
   
Using the boundedness of the operator \(Q\), we can obtain an exponential decay estimate for $\eta^-$:
\begin{align*}
    \frac{1}{2} \frac{\mathrm{d}}{\mathrm{d}t} \|\eta^{-}(t)\|_{\mathcal{H}_{2}}^{2} & = \left\langle L^{-}\eta^{-},\eta^{-}\right\rangle_{\mathcal{H}_{2}} +  2q \left\langle Q\eta^{-},\eta^{-}\right\rangle_{\mathcal{H}_{2}}\\
    & < - \frac{1}{2} \|\eta^{-}(t)\|_{\mathcal{H}_{2}}^{2} + 2q \left\langle Q\eta^{-},\eta^{-}\right\rangle_{\mathcal{H}_{2}}\\
    & < - \left(\frac{1}{2} - 2C_{Q}q\right) \|\eta^{-}(t)\|_{\mathcal{H}_{2}}^{2} = -\frac{\delta}{2} \|\eta^{-}(t)\|_{\mathcal{H}_{2}}^{2},
\end{align*}
where \(\delta \coloneqq 1 - 4C_{Q} q > 0\) and \(C_{Q} > 0\) is a constant depending on the operator \(Q\). 

The linear analysis for $\eta^+$ remains unchanged when $q \neq 0$. The nonlinearity in \cref{def:nonlinear2} will include some additional terms \(\eta^{-} H\eta^{+} - \eta^{+} H\eta^{-}\). To analyze the local well-posedness of \cref{eq:perturbations}, we again adopt the Galerkin's method. The additional nonlinear terms give rise to the following expression
\begin{align*}
    -\sqrt{\pi} \partial_{\theta} (\eta^{-}_{n} H\eta^{+}_{n} - \eta^{+}_{n} H\eta^{-}_{n} ) & = \frac{1}{2\sin(\theta)} \big(\partial_{\theta}\eta^{+}_{n} \partial_{\theta}^{2} v^{-}_{n} +\eta^{+}_{n} \partial_{\theta}v^{-}_{n} - \partial_{\theta}\eta^{-}_{n} \partial_{\theta}v^{+}_{n} - \eta^{-}_{n} \partial_{\theta}^{2} v^{+}_{n}\big)\\
    & = u_{n}^{+} \partial_{\theta}^{2} v_{n}^{-} + f_{n}^{+} \partial_{\theta} v_{n}^{-} - u_{n}^{-} \partial_{\theta}^{2} v_{n}^{+} + f_{n}^{-} \partial_{\theta} v_{n}^{+},
\end{align*}
where \(u_{n}^{\pm} \coloneqq - \sqrt{\pi} \rho_2^{1/2} \partial_{\theta} \eta_{n}^{\pm}\) and \(f_{n}^{\pm} \coloneqq \sqrt{\pi} \rho_2^{1/2} \eta_{n}^{\pm}\). These satisfy
\begin{align*}
    \|\partial_{\theta} f_{n}^{\pm} \|_{L^{2}} \leq \|\partial_{\theta}^{m} u_{n}^{\pm}\|_{L^{2}}, \qquad \text{for \(m\geq 1\).}
\end{align*}
this can be done similarly to the estimates performed later in \cref{ssec:nonlinear-wp}.

In summary, if 
\[
a=1, \qquad p = 1 - q, \qquad 0 < q < \frac{1}{4 C_{Q}},
\]
the results and methods developed in this paper remain applicable. 

However, the question of stability or instability for the more general parameter range in \cref{ass:parameters} remains open and is left for future investigation.
\end{remark}

\section{(Local) existence and uniqueness for the nonlinear problem}
\label{ssec:nonlinear-wp}

Having analyzed the linear problem, now we turn to the proof of the local well-posedness of the classical solution to \cref{eq:perturbations2} with odd initial data. The proof of \cref{le:nonlinear existence} follows from Galerkin’s method, similar to the approach used in \cref{the:existence}. For convenience, we continue to denote the approximate solutions by \(\eta_{n}^{\pm}, v_{n}^{\pm} \). The main task is to establish uniform estimates for nonlinear terms. Since the derivative of the nonlinear term is
\begin{align*}
    & - \sqrt{\pi} \rho_2^{1/2} \partial_{\theta} N_{1} (\eta_{n}^{\pm}) = \frac{1}{2 \sin(\theta)} \left( \partial_{\theta}^{2} \eta_{n}^{+} v_{n}^{+} - \eta_{n}^{+} \partial_{\theta}^{2} v_{n}^{+} + \eta_{n}^{-} \partial_{\theta}^{2} v_{n}^{-} - \partial_{\theta}^{2} \eta_{n}^{-} v_{n}^{-} \right),\\
    & - \sqrt{\pi} \rho_2^{1/2} \partial_{\theta} N_{2} (\eta_{n}^{\pm}) = \frac{1}{2 \sin(\theta)} \left( \partial_{\theta}^{2} \eta_{n}^{-} v_{n}^{+} - \eta_{n}^{-} \partial_{\theta}^{2} v_{n}^{+} + \eta_{n}^{+} \partial_{\theta}^{2} v_{n}^{-} - \partial_{\theta}^{2} \eta_{n}^{+} v_{n}^{-} \right),
\end{align*}	
which exhibit a stronger singularity, to derive uniform \(H^{m}\) estimates for \(\rho_2^{1/2} \partial_{\theta} \eta_{n}^{\pm} \), we rewrite \(\rho_2^{1/2} \partial_{\theta} N_{i} (\eta_{n}^{\pm})\)  (with \(i = 1,\, 2\)) using the explicit form of approximate solutions \(\eta_{n}^{\pm}\).

\begin{proof}[Proof of \cref{le:nonlinear existence}]
\uline{Step 1.} \emph{Construction of approximate solutions.}
    Following the notation introduced in \cref{sec:existence-linearized}, for a fixed positive integer $n$, we define
    \begin{align}
        & \eta_{n}^{\pm} (t, \theta) = \sum_{k = 1}^{n}\eta_{2, k}^{\pm} (t) e_{2, k},\label{eq:nonlinear etan}\\
        & \partial_{\theta} v_{n}^{\pm} (t, \theta) = H \eta_{n}^{\pm} (t, \theta).\label{eq:nonlinear vn}
    \end{align}
Our objective is to determine the coefficients \(\eta_{2,k}^{\pm} (t)\) such that
\begin{align}
       \left\langle \partial_{t} \partial_{\theta} \eta_{n}^{+}, \partial_{\theta} e_{2, k} \rho_2 \right\rangle - \left\langle \partial_{\theta} L^{+} \eta_{n}^{+}, \partial_{\theta} e_{2, k} \rho_2 \right\rangle &= \left\langle \partial_{\theta} N_{1}(\eta_{n}^{\pm}), \partial_{\theta} e_{2, k} \rho_2 \right\rangle ,\label{eq:nonlinear etak positive}\\
        \left\langle \partial_{t} \partial_{\theta} \eta_{n}^{-}, \partial_{\theta} e_{2, k} \rho_2 \right\rangle - \left\langle \partial_{\theta} L^{-} \eta_{n}^{-}, \partial_{\theta} e_{2, k} \rho_2 \right\rangle &= \left\langle \partial_{\theta} N_{2}(\eta_{n}^{\pm}), \partial_{\theta} e_{2, k} \rho_2 \right\rangle,\label{eq:nonlinear etak negative}
\\
         \left\langle \partial_{\theta} \eta_{0}^{+}, \partial_{\theta} e_{2, k} \right\rangle &= \eta_{2, k}^{+} (0),\label{eq:nonlinear etan positive initial data}\\
         \left\langle \partial_{\theta} \eta_{0}^{-}, \partial_{\theta} e_{2, k} \right\rangle &= \eta_{2, k}^{-} (0).\label{eq:nonlinear etan negative initial data}
    \end{align}
    We define
    \begin{align*}
       & u_{n}^{\pm} (t, \theta) \coloneqq - \sqrt{\pi} \rho_2^{1/2} \partial_{\theta} \eta_{n}^{\pm} = \sum_{k=1}^{n} \eta_{2,k}^{\pm} (t) \sin((k+1) \theta),\\
       & L_{1}^{\pm} (u_{n}^{\pm}) \coloneqq - \sqrt{\pi} \rho_2^{1/2} \partial_{\theta} L^{\pm} \eta_{n}^{\pm},
    \end{align*}
    and 
    \begin{align*}
            \mathcal{N}_{1} (u_{n}^{\pm}) & \coloneqq - \sqrt{\pi} \rho_2^{1/2} \partial_{\theta} N_{1}(\eta_{n}^{\pm}) \\
         & = \sqrt{\pi} \rho_2^{1/2} \left( \partial_{\theta}^{2} \eta_{n}^{+} v_{n}^{+} - \eta_{n}^{+} \partial_{\theta}^{2} v_{n}^{+} + \eta_{n}^{-} \partial_{\theta}^{2} v_{n}^{-} - \partial_{\theta}^{2} \eta_{n}^{-} v_{n}^{-} \right),     \\
            \mathcal{N}_{2} (u_{n}^{\pm}) & \coloneqq - \sqrt{\pi} \rho_2^{1/2} \partial_{\theta} N_{2}(\eta_{n}^{\pm})\\
        & = \sqrt{\pi} \rho_2^{1/2} \left( \partial_{\theta}^{2} \eta_{n}^{-} v_{n}^{+} - \eta_{n}^{-} \partial_{\theta}^{2} v_{n}^{+} + \eta_{n}^{+} \partial_{\theta}^{2} v_{n}^{-} - \partial_{\theta}^{2} \eta_{n}^{+} v_{n}^{-} \right).
    \end{align*}
Since \(\partial_{\theta} v_{n}^{\pm} = H \eta_{n}^{\pm} \), we have
\begin{align}\label{eq:def gn}
    \begin{aligned}
        g_{n}^{\pm} (t, \theta) & \coloneqq \sqrt{\pi} \rho_2^{1/2} \partial_{\theta}^{2} v_{n}^{\pm} = \frac{1}{2 \sin(\theta)} \sum_{k=1}^{n} \eta_{2,k}^{\pm} (t) \left[ \sin((k+2)\theta) - \sin(k\theta) \right]\\
        & = \sum_{k=1}^{n} \eta_{2,k}^{\pm} (t) \cos((k+1)\theta),
    \end{aligned}
\end{align}
which implies
\begin{align}\label{eq:estimate-gn}
    \| g_{n}^{\pm} (t) \|_{H^{m}} = \| u_{n}^{\pm} (t) \|_{H^{m}}, \qquad \text{for every \(m \geq 0\).}
\end{align}
Next, we represent \(\sqrt{\pi} \rho_2^{1/2} v_{n}^{\pm} \) as an explicit Fourier series.	Since \(\eta_{n}^{\pm}\) and \(v_{n}^{\pm}\) are odd functions, combining with \cref{eq:nonlinear etan} and \cref{eq:nonlinear vn}, we obtain
\begin{align}\label{eq:nonlinear vn expression}
    v_{n}^{\pm} (t, \theta) = \sum_{k=1}^{n+2} \frac{1}{k^{2}} \left( - \eta_{2,k-2}^{\pm} (t) + \eta_{2,k}^{\pm} (t) \right) \sin (k\theta),
\end{align}
where \( \eta_{2,-1}^{\pm} (t), \, \eta_{2,0}^{\pm} (t),\, \eta_{2,n+1}^{\pm} (t)\), and \(\eta_{2,n+2}^{\pm} (t) \) are understood to be zero. 

Next, we analyze the expression \(\frac{\sin(k\theta)}{\sin(\theta)}\).
If \(k\) is odd, that is, \( k = 2l - 1\), with \(l = 1, 2, \ldots\), we have
\begin{align*}
    \frac{\sin ((2l - 1)\theta)}{\sin(\theta)} & =\frac{\sin(\theta) - \sin(\theta) + \sin(3\theta) - \cdots - \sin((2l - 3)\theta) + \sin((2l - 1)\theta)}{\sin(\theta)}\\
    & = 1 + 2\sum_{j=1}^{l-1} \cos(2j\theta).
\end{align*}
On the other hand, if \( k = 2l\), with \(l = 1, 2, \ldots \), then
\begin{align*}
    \frac{\sin(k\theta)}{\sin(\theta)} & = \frac{\sin(2\theta) - \sin(2\theta) + \sin(4\theta) - \cdots - \sin((2l - 2)\theta) + \sin(2l\theta)}{\sin(\theta)}\\
    & = 2 \sum_{j=1}^{l} \cos((2j-1)\theta).
\end{align*}
Without loss of generality, we consider the case \(n = 2l -1\) in \cref{eq:nonlinear vn expression} and define
\begin{align*}
       h_{n}^{\pm} (t, \theta) & \coloneqq \sqrt{\pi} \rho_2^{1/2} v_{n}^{\pm}\\
    & = \frac{1}{2} \sum_{k=1}^{l+1} \frac{- \eta_{2, 2k - 3}^{\pm} (t) + \eta_{2, 2k - 1}^{\pm} (t) }{(2k - 1)^{2}} + \sum_{k=1}^{l} \cos(2k\theta) \left( \sum_{j\geq k}^{l} \frac{ - \eta_{2, 2j - 1}^{\pm} (t) + \eta_{2, 2j + 1}^{\pm} (t) }{(2j + 1)^{2}}\right)\\
    & \quad + \sum_{k=1}^{l} \cos((2k - 1)\theta) \left( \sum_{j\geq k}^{l} \frac{ - \eta_{2, 2j - 2}^{\pm} (t) + \eta_{2, 2j}^{\pm} (t) }{(2j)^{2}}\right). 
\end{align*}
Thus, \(\mathcal{N}_{1}\) and \(\mathcal{N}_{2}\) can be expressed as
\begin{align*}
    \mathcal{N}_{1} &= - 2 \sin(\theta) h_{n}^{+} \partial_{\theta} u_{n}^{+} - 2 \cos(\theta) h_{n}^{+} u_{n}^{+} - \eta_{n}^{+} g_{n}^{+} + \eta_{n}^{-} g_{n}^{-} + 2 \cos(\theta) u_{n}^{-} h_{n}^{-} + 2 \sin(\theta) h_{n}^{-} \partial_{\theta} u_{n}^{-},\\
    \mathcal{N}_{2} &= - 2 \sin(\theta) h_{n}^{+} \partial_{\theta} u_{n}^{-} - 2 \cos(\theta) h_{n}^{+} u_{n}^{-} - \eta_{n}^{-} g_{n}^{+} + \eta_{n}^{+} g_{n}^{-} + 2 \cos(\theta) u_{n}^{+} h_{n}^{-} + 2 \sin(\theta) h_{n}^{-} \partial_{\theta} u_{n}^{+}.
\end{align*}
We note that
\begin{align*}
    &\| \partial_{\theta}^{m} h_{n}^{\pm} \|_{L^{2}}^{2} \\ & = \sum_{k=1}^{l}(2k)^{2m} \left( \sum_{j\geq k}^{l} \frac{ -\eta_{2, 2j - 1 }^{\pm} (t)  + \eta_{2, 2j + 1}^{\pm} (t) }{(2j + 1)^{2}} \right)^{2} + \sum_{k=1}^{l} (2k - 1)^{2m} \left( \sum_{j\geq k}^{l} \frac{- \eta_{2, 2j - 2}^{\pm} (t) + \eta_{2, 2j}^{\pm} (t) }{(2j)^{2}}\right)^{2}\\
    & \leq \sum_{k=1}^{l} (2k)^{2m} \sum_{j \geq k}^{l} \left( (\eta_{2, 2j -1})^{2}\cdot\frac{1}{(2j+1)^{4}}\right) + \sum_{k=1}^{l}(2k-1)^{2m}\sum_{j\geq k}^{l}\left((\tilde{\eta}_{2j-2})^{2}\cdot\frac{1}{(2j)^{4}}\right)\\
    & \leq \sum_{j \geq 1}^{l} \sum_{k \leq j} \left( ( \eta_{2, 2j - 1}^{\pm} (t) )^{2} \cdot \frac{(2k)^{2m}}{(2j + 1)^{4}} \right) + \sum_{j \geq 1}^{l} \sum_{k \leq j} \left( ( \eta_{2, 2j - 2 }^{\pm} (t) )^{2} \cdot \frac{(2k - 1)^{2m}}{(2j)^{4}}\right)\\
    & \leq \sum_{j \geq 1}^{l} ( \eta_{2, 2j - 1}^{\pm} (t) )^{2} \cdot j \cdot \frac{(2j)^{2m}}{(2j + 1)^{4}} + \sum_{j \geq 1}^{l}( \eta_{2, 2j - 2}^{\pm} (t) )^{2} \cdot j \cdot \frac{(2j - 1)^{2m}}{(2j)^{4}}\\
    & \leq \sum_{k=1}^{n} (k+1)^{2m - 4 + 1} (\eta_{2,k}^{\pm} (t) )^{2} \\
    & \leq \sum_{k=1}^{n} (k+1)^{2m - 2}( \eta_{2, k}^{\pm} (t) )^{2}\\
    & = \| \partial_{\theta}^{m-1} u_{n}^{\pm} \|^2_{L^{2}},
\end{align*}
which yields 
\begin{align}\label{eq:estimate-hn}
    \| \partial_{\theta}^{m} h_{n}^{\pm} \|_{L^{2}} \leq \| \partial_{\theta}^{m-1} u_{n}^{\pm} \|_{L^{2}}, \qquad \text{for \( m \geq 1 \).}
\end{align}
The ordinary differential equations \crefrange{eq:nonlinear etak positive}{eq:nonlinear etak negative} are equivalent to
\begin{align}
    \left\langle \partial_{t} u_{n}^{+}, \sin((k+1)\theta) \right\rangle - \left\langle L_{1}^{+} (u_{n}^{+}), \sin((k+1)\theta) \right\rangle &= \left\langle \mathcal{N}_{1}(u_{n}^{+}), \sin((k+1)\theta) \right\rangle,\label{eq:nonlinear un positive ode}\\
    \left\langle \partial_{t} u_{n}^{-}, \sin((k+1)\theta) \right\rangle - \left\langle L_{1}^{-} (u_{n}^{\pm}), \sin((k+1)\theta) \right\rangle &= \left\langle \mathcal{N}_{2}(u_{n}^{\pm}), \sin((k+1)\theta) \right\rangle.\label{eq:nonlinear un negative ode}
\end{align}
By the standard existence theory for ordinary differential equations, there exists a \( T (k) > 0\), which may depend on \(k\), and a unique set of absolutely continuous functions \( \eta_{2,k}^{\pm} (t)\), with \( k = 1, 2, \ldots, n\), satisfying \crefrange{eq:nonlinear etak positive}{eq:nonlinear etak negative} and \crefrange{eq:nonlinear etan negative initial data}{eq:nonlinear etan positive initial data} for \( 0 \leq t \leq T (k)\).

   \uline{Step 2.} \emph{Energy estimates.} We need several estimates on $u_n^\pm$ and their derivatives, which are proven below.
   
\uline{Step 2\,a.} \emph{\(L^{2}\)-estimates of \(u_{n}^{\pm}\).} Multiplying \cref{eq:nonlinear un positive ode} by \(\eta_{2,k}^{+} (t)\) and \cref{eq:nonlinear un negative ode} by \(\eta_{2,k}^{-} (t)\), and then summing up over \(k = 1, 2, \ldots, n\), we obtain
    \begin{align*}
        \frac{1}{2} \frac{\mathrm d}{\mathrm dt} \left( \| u_{n}^{+} \|_{L^{2}}^{2} + \| u_{n}^{-} \|_{L^{2}}^{2} \right)  = \left\langle L_{1}^{+} (u_{n}^{+}), u_{n}^{+} \right\rangle + \left\langle L_{1}^{-} (u_{n}^{-}), u_{n}^{-} \right\rangle + \left\langle \mathcal{N}_{1}(u_{n}^{\pm}), u_{n}^{+} \right\rangle + \left\langle \mathcal{N}_{2}(u_{n}^{\pm}), u_{n}^{-} \right\rangle.
    \end{align*}
 We note that \(v_{n}^{\pm}\) are odd and periodic functions, and hence satisfy \(v_{n}(\pi)^{\pm} = v_{n}^{\pm} (0) \).  Moreover, since \(\sin(\theta) \geq \frac{2}{\pi} \min\{\theta, \pi - \theta\}\) for \(\theta \in [0, \pi]\), it follows that 
 \begin{align*}
     \|h_{n}^{\pm}\|_{L^{\infty}} = \bigg\| \frac{v_{n}^{\pm}}{\sin(\theta)} \bigg\|_{L^{\infty}} \lesssim \|\partial_{\theta} v_{n}^{\pm} \|_{L^{\infty}}
 \end{align*} 
 by Lagrange's mean value theorem. Then, applying Poincar\'{e}'s inequality and \cref{le:hilbert estimate}, we obtain
 \begin{align}\label{eq:estimate-hn infty}
 \|h_{n}^{\pm}\|_{L^{\infty}} \lesssim \|\partial_{\theta} v_{n}^{\pm}\|_{L^{\infty}} \| H (\partial_{\theta} \eta_{n}^{\pm})\|_{L^{2}} \lesssim \|\partial_{\theta} \eta_{n}^{\pm}\|_{L^{2}} \lesssim \|u_{n}^{\pm}\|_{L^{2}}
 \end{align}
 and
 \begin{align}\label{eq:estimate-sinhn}
     \|2 \partial_{\theta} (\sin(\theta) h_{n}^{\pm})\|_{L^{\infty}} = \bigg\| \partial_{\theta} v_{n}^{\pm} - \cos(\theta) \frac{v_{n}^{\pm}}{\sin(\theta)}\bigg\|_{L^{\infty}} \lesssim \|\partial_{\theta} v_{n}^{\pm}\|_{L^{\infty}} \lesssim \|u_{n}^{\pm}\|_{L^{2}}.
 \end{align}
Applying \cref{eq:estimate-gn}, \cref{eq:estimate-hn}, together with \crefrange{eq:estimate-hn infty}{eq:estimate-sinhn}, we obtain
\begin{align*}
   & \left\langle \mathcal{N}_{1}(u_{n}^{\pm}), u_{n}^{+} \right\rangle + \left\langle \mathcal{N}_{2}(u_{n}^{\pm}), u_{n}^{-} \right\rangle \\
   & \lesssim \left(\|\partial_{\theta} (\sin(\theta) h_{n}^{+}) \|_{L^{\infty}} + \| h_{n}^{+}\|_{L^{\infty}} + \| h_{n}^{-}\|_{L^{\infty}} + \|\eta_{n}^{+}\|_{L^{\infty}} + \|\eta_{n}^{-}\|_{L^{\infty}} \right) \left( \| u_{n}^{+} \|_{L^{2}}^{2} + \| u_{n}^{-} \|_{L^{2}}^{2} \right) \\
   & \quad + \left\langle 2 \sin(\theta) h_{n}^{-} \partial_{\theta} u_{n}^{-}, u_{n}^{+} \right\rangle + \left\langle 2 \sin(\theta) h_{n}^{-} \partial_{\theta} u_{n}^{+}, u_{n}^{-} \right\rangle \\
   & \lesssim \left(\| u_{n}^{+} \|_{L^{2}} + \| u_{n}^{-} \|_{L^{2}} + \| \partial_{\theta} \eta_{n}^{+} \|_{L^{2}} + \| \partial_{\theta} \eta_{n}^{-} \|_{L^{2}} \right) \left( \| u_{n}^{+} \|_{L^{2}}^{2} + \| u_{n}^{-} \|_{L^{2}}^{2} \right)\\
   & \quad + \left\langle 2 \sin(\theta) h_{n}^{-} \partial_{\theta} u_{n}^{-}, u_{n}^{+} \right\rangle + \left\langle 2 \sin(\theta) h_{n}^{-} \partial_{\theta} u_{n}^{+}, u_{n}^{-} \right\rangle \\
   & \lesssim  \left( \| u_{n}^{+} \|_{L^{2}}^{2} + \| u_{n}^{-} \|_{L^{2}}^{2} \right)^{3/2} + \left\langle 2 \sin(\theta) h_{n}^{-} \partial_{\theta} u_{n}^{-}, u_{n}^{+} \right\rangle + \left\langle 2 \sin(\theta) h_{n}^{-} \partial_{\theta} u_{n}^{+}, u_{n}^{-} \right\rangle. 
\end{align*}
For the last two terms in the above estimate, we compute
\begin{align*}
    & \quad \left\langle 2 \sin(\theta) h_{n}^{-} \partial_{\theta} u_{n}^{-}, u_{n}^{+} \right\rangle + \left\langle 2 \sin(\theta) h_{n}^{-} \partial_{\theta} u_{n}^{+}, u_{n}^{-} \right\rangle \\
    & = \int_{\T} \left( 2 \sin(\theta) h_{n}^{-} \partial_{\theta} u_{n}^{-} u_{n}^{+} + 2 \sin(\theta) h_{n}^{-} \partial_{\theta} u_{n}^{+}, u_{n}^{-} \right) \mathrm d \theta\\
    & = \int_{\T} 2 \sin(\theta) h_{n}^{-} \partial_{\theta} \left( u_{n}^{-} u_{n}^{+}\right) \mathrm d \theta = - \int_{\T} 2 \partial_{\theta} \left(\sin(\theta) h_{n}^{-}\right) u_{n}^{-} u_{n}^{+} \mathrm d \theta \\
    & \lesssim \|u_{n}^{+}\|_{L^{2}} \|u_{n}^{-}\|_{L^{2}}^{2}.  
\end{align*}
Thus, we obtain
\begin{align*}
    \frac{\mathrm d}{\mathrm dt} \left( \| u_{n}^{+} \|_{L^{2}}^{2} + \| u_{n}^{-} \|_{L^{2}}^{2} \right) \leq C  \left( \| u_{n}^{+} \|_{L^{2}}^{2} + \| u_{n}^{-} \|_{L^{2}}^{2} \right) + C  \left( \| u_{n}^{+} \|_{L^{2}}^{2} + \| u_{n}^{-} \|_{L^{2}}^{2} \right)^{3/2},
\end{align*}
where \(C > 0\) is a constant independent of \(n\).

\uline{Step 2\,b.} \emph{{\(H^{m}\)-estimates of \(u_{n}^{\pm}\) (for \(m \geq 1\)).}}
Multiplying equation \cref{eq:nonlinear un positive ode} by \( (k + 1)^{2m} \eta_{2,k}^{+} (t)\) and equation \cref{eq:nonlinear un negative ode} by \( (k + 1)^{2m} \eta_{2,k}^{-} (t)\), summing up over \( k = 1, 2,\ldots, n \), and applying integration by parts, we obtain
    \begin{align*}
        \frac{1}{2} \frac{\mathrm d}{\mathrm dt} \left( \| \partial_{\theta}^{m} u_{n}^{+} (t) \|_{L^{2}}^{2} + \| \partial_{\theta}^{m} u_{n}^{-} (t)\|_{L^{2}}^{2} \right) & = \left\langle \partial_{\theta}^{m} L_{1}^{+} (u_{n}^{+}), \partial_{\theta}^{m} u_{n}^{+} \right\rangle + \left\langle \partial_{\theta}^{m} L_{1}^{-} (u_{n}^{-}), \partial_{\theta}^{m} u_{n}^{-} \right\rangle\\
        & \quad + \left\langle \partial_{\theta}^{m} \mathcal{N}_{1} (u_{n}^{\pm}), \partial_{\theta}^{m} u_{n}^{+} \right\rangle + \left\langle \partial_{\theta}^{m} \mathcal{N}_{2} (u_{n}^{\pm}), \partial_{\theta}^{m} u_{n}^{-} \right\rangle.
    \end{align*}
 We decompose the nonlinear terms into three components
 \begin{align*}
     J_{1} &\coloneqq \left\langle \partial_{\theta}^{m} \left( - 2 \sin(\theta) h_{n}^{+} \partial_{\theta} u_{n}^{+} - 2 \cos(\theta) h_{n}^{+} u_{n}^{+} - \eta_{n}^{+} g_{n}^{+} + \eta_{n}^{-} g_{n}^{-} + 2 \cos(\theta) u_{n}^{-} h_{n}^{-} \right), \partial_{\theta}^{m} u_{n}^{+} \right\rangle,\\ 
     J_{2} &\coloneqq \left\langle \partial_{\theta}^{m} \left(  - 2 \sin(\theta) h_{n}^{+} \partial_{\theta} u_{n}^{-} - 2 \cos(\theta) h_{n}^{+} u_{n}^{-} - \eta_{n}^{-} g_{n}^{+} + \eta_{n}^{+} g_{n}^{-} + 2 \cos(\theta) u_{n}^{+} h_{n}^{-} \right), \partial_{\theta}^{m} u_{n}^{-} \right\rangle,\\
     J_{3} &\coloneqq 2 \left\langle \partial_{\theta}^{m} \left(\sin(\theta) h_{n}^{-} \partial_{\theta} u_{n}^{-} \right), \partial_{\theta}^{m} u_{n}^{+}\right\rangle + 2 \left\langle \partial_{\theta}^{m} \left(\sin(\theta) h_{n}^{-} \partial_{\theta} u_{n}^{+} \right), \partial_{\theta}^{m} u_{n}^{-}\right\rangle.\\
 \end{align*}
 We now turn to the estimates of \(J_{3}\).  Each of the terms in \(J_{3}\) can be written in the form \(\left\langle \partial_{\theta}^{m_{1}} u_{n}^{\pm} \partial_{\theta}^{m_{2}} h_{n}^{-} \partial_{\theta}^{m_{3}} \sin(\theta), \partial_{\theta}^{m} u_{n}^{\mp}\right\rangle\) with \(m_{1} + m_{2} + m_{3} = m\). In the case \(m_{1} = m\), we integrate by parts and apply H\"{o}lder's inequality together with \cref{eq:estimate-sinhn} to deduce
 \begin{align*}
     & \left\langle\sin(\theta) h_{n}^{-} \partial_{\theta}^{m+1} u_{n}^{-} , \partial_{\theta}^{m} u_{n}^{+} \right\rangle + \left\langle\sin(\theta) h_{n}^{-} \partial_{\theta}^{m+1} u_{n}^{+} , \partial_{\theta}^{m} u_{n}^{-} \right\rangle\\
     & = \int_{\T} \left(\sin(\theta) h_{n}^{-} \partial_{\theta}^{m+1} u_{n}^{-} \partial_{\theta}^{m} u_{n}^{+} + \sin(\theta) h_{n}^{-} \partial_{\theta}^{m+1} u_{n}^{+} \partial_{\theta}^{m} u_{n}^{-} \right) \mathrm d \theta\\
     & = \int_{\T} \sin(\theta) h_{n}^{-} \partial_{\theta} \left( \partial_{\theta}^{m} u_{n}^{+} \partial_{\theta}^{m} u_{n}^{-}\right) \\
     & = - \int_{\T} \partial_{\theta} \left( \sin(\theta) h_{n}^{-} \right) \partial_{\theta}^{m} u_{n}^{+} \partial_{\theta}^{m} u_{n}^{-}  \mathrm d \theta \\
     & \leq \| \partial_{\theta} \left( \sin(\theta) h_{n}^{-} \right) \|_{L^{\infty}} \| \partial_{\theta}^{m} u_{n}^{+} \|_{L^{2}} \| \partial_{\theta}^{m} u_{n}^{-} \|_{L^{2}} \\
     & \lesssim \|u_{n}^{-}\|_{L^{2}} \| \partial_{\theta}^{m} u_{n}^{+} \|_{L^{2}} \| \partial_{\theta}^{m} u_{n}^{-} \|_{L^{2}}.
 \end{align*}
 If the indices satisfy \(m_{1} + m_{2} + m_{3} = m \), with \(0 \leq m_{1} \leq m-1, 1 \leq m_{2}\), then, by H\"{o}lder's inequality, Poincar\'{e}'s inequality, and \cref{eq:estimate-hn}, we obtain
 \begin{align*}
     \left\langle \partial_{\theta}^{m_{1}} u_{n}^{\pm} \partial_{\theta}^{m_{2}} h_{n}^{-} \partial_{\theta}^{m_{3}} \sin(\theta), \partial_{\theta}^{m} u_{n}^{\mp}\right\rangle & \lesssim \| \partial_{\theta}^{m_{1}} u_{n}^{\pm}\|_{L^{\infty}} \| \partial_{\theta}^{m_{2}} h_{n}^{-} \|_{L^{2}} \| \partial_{\theta}^{m} u_{n}^{\mp}\|_{L^{2}}\\
     & \lesssim \| \partial_{\theta}^{m_{1} + 1} u_{n}^{\pm}\|_{L^{2}} \| \partial_{\theta}^{m_{2} - 1} u_{n}^{-} \|_{L^{2}} \| \partial_{\theta}^{m} u_{n}^{\mp}\|_{L^{2}}\\
     & \lesssim \| u_{n}^{\pm} \|_{H^{m}} \| u_{n}^{-} \|_{H^{m}} \| u_{n}^{\mp} \|_{H^{m}} .
 \end{align*} 
 If the indices satisfy \(m_{1} + m_{2} = m\) with \(m_{1} \leq m-1\), then by \cref{eq:estimate-hn infty}, we have
 \begin{align*}
     \left\langle h_{n}^{-} \partial_{\theta}^{m_{1}} u_{n}^{\pm} \partial_{\theta}^{m_{2}} \sin(\theta), \partial_{\theta}^{m} u_{n}^{\mp} \right\rangle & \lesssim \| h_{n}^{-}\|_{L^{\infty}} \|\partial_{\theta}^{m_{1}} u_{n}^{\pm}\|_{L^{2}} \|\partial_{\theta}^{m} u_{n}^{\mp}\|_{L^{2}} \\
     & \lesssim \|u_{n}^{-}\|_{L^{2}} \|\partial_{\theta}^{m_{1}} u_{n}^{\pm}\|_{L^{2}} \|\partial_{\theta}^{m} u_{n}^{\mp}\|_{L^{2}}.
 \end{align*}
 Combining the above estimates, we conclude that
 \begin{align*}
     J_{3} \leq C \left( \| u_{n}^{+} \|_{H^{m}}^{2} + \| u_{n}^{-} \|_{H^{m}}^{2} \right)^{3/2}.
 \end{align*}
  As for the estimates of \(J_{1}\) and \(J_{2}\), using H\"{o}lder's inequality, Poincar\'{e}'s inequality, together with \cref{eq:estimate-gn}, \cref{eq:estimate-hn}, \cref{eq:estimate-hn infty}, and \cref{eq:estimate-sinhn}, we obtain
  \begin{align*}
      J_{1} + J_{2} \leq C \left( \| u_{n}^{+} \|_{H^{m}}^{2} + \| u_{n}^{-} \|_{H^{m}}^{2} \right)^{3/2}.
  \end{align*}
 Define 
 \begin{align*}
     I_{m,n}^{2} (t) \coloneqq \| u_{n}^{+} (t) \|_{H^{m}}^{2} + \| u_{n}^{-} (t) \|_{H^{m}}^{2},
 \end{align*}
 Then it follows that
 \begin{align}\label{eq:C12}
    \frac{\mathrm d}{\mathrm dt} I_{m,n} (t) \leq C_{1} I_{m,n} (t) + C_{2} I_{m,n}^{2} (t),
\end{align}
where \(C_{1}, C_{2} > 0 \) are constants independent of \(n\). A direct computation yields
\begin{align*}
    I_{m,n} (t) \leq \frac{C_{{1}}}{C_{{2}}(1-e^{C_{{1}}t})I_{m} (0) + C_{{1}}} I_{m} (0) e^{C_{{1}}t}, \qquad \text{for \( 0 \leq t < T^{*}\),}
\end{align*}
where
\[T^{*} \coloneqq \frac{1}{C_{{1}}} \ln\left(1 + \frac{C_{{1}}}{C_{{2}} I_{m} (0) }\right),\]
and \(m\) is any fixed integer with \( m \geq 1\). We choose
\begin{align*}
    T_{0} \coloneqq \frac{1}{C_{{1}}} \ln\left(1 + \frac{C_{{1}}}{2C_{{2}} I_{m} (0)}\right),
\end{align*}
where \(I_{m}^{2} (0) = \| u_{0}^{+} \|_{H^{m}}^{2} + \| u_{0}^{-} \|_{H^{m}}^{2}\). It then follows that
\begin{align}\label{eq:estimate-nonlinear un}
    \sup_{0 \leq t \leq T_{0}} \| u_{n}^{\pm} (t) \|_{H^{m}} \leq\ 4 e^{C_{{1}}T_{{0}}} \left( \| u_{0}^{+} \|_{H^{m}} + \| u_{0}^{-} \|_{H^{m}}\right).
\end{align}

\uline{Step 2\,c.} \emph{{\(H^{m-1}\)-estimates of \(\partial_{t} u_{n}^{\pm}\) (for \(m \geq 1\)).}}
To derive the \(H^{m-1}\)-estimate of \(\partial_{t}u_{n}^{\pm}\), we multiply equation \cref{eq:nonlinear un positive ode} by \((k+1)^{2m-2} \frac{\mathrm d}{\mathrm dt} \eta_{2,k}^{+} (t)\), equation \cref{eq:nonlinear etak negative} by \((k+1)^{2m-2} \frac{\mathrm d}{\mathrm dt} \eta_{2,k}^{-} (t)\), then sum up \( k = 1, 2, \ldots,n\). After applying integration by parts, we obtain
\begin{align*}
    \| \partial_{t} \partial_{\theta}^{m-1} u_{n}^{+} \|_{L^{2}} + \| \partial_{t} \partial_{\theta}^{m-1} u_{n}^{+} \|_{L^{2}} & = \left\langle \partial_{\theta}^{m-1} L_{1}^{+}(u_{n}^{+}), \partial_{t} \partial_{\theta}^{m-1} u_{n}^{+} \right\rangle + \left\langle \partial_{\theta}^{m-1} L_{1}^{-}(u_{n}^{-}), \partial_{t} \partial_{\theta}^{m-1} u_{n}^{-} \right\rangle\\
    & \quad + \left\langle \partial_{\theta}^{m-1} \mathcal{N}_{1} (u_{n}^{\pm}), \partial_{t} \partial_{\theta}^{m-1} u_{n}^{+} \right\rangle + \left\langle \partial_{\theta}^{m-1} \mathcal{N}_{2} (u_{n}^{\pm}), \partial_{t} \partial_{\theta}^{m-1} u_{n}^{-} \right\rangle.
\end{align*}
For the nonlinear terms \(\partial_{\theta}^{m-1} \mathcal{N}_{i}(u_{n}^{\pm})\), with \(i = 1, \, 2\), applying \cref{eq:estimate-nonlinear un} yields
\begin{align*}
    \|\partial_{\theta}^{m-1} \mathcal{N}_{i}(u_{n}^{\pm})\|_{L^{2}} \lesssim I_{m}^{2} (t),
\end{align*}
which implies
\begin{align}\label{eq:estimate-nonlinear ptun}
     \sup_{0 \leq t \leq T_{0}} \| \partial_{t} \partial_{\theta}^{m-1} u_{n}^{\pm} (t) \|_{L^{2}} \leq C e^{2C_{{1}}T_{{0}}} \left( \| u_{0}^{+} \|_{H^{m}}^{2} + \| u_{0}^{-} \|_{H^{m}}^{2} \right),
\end{align}
where \(C\) is a positive constant independent of \(n\).

\uline{Step 2\,d.} \emph{{\(H^{m-2}\)-estimates of \(\partial_{t} u_{n}^{\pm}\) (for \(m \geq 2\)).}}
Proceeding similarly, we multiply equation \cref{eq:nonlinear un positive ode} by \((k+1)^{2m-4} \frac{\mathrm d^{2}}{\mathrm dt^{2}} \eta_{2,k}^{+} (t)\), equation \cref{eq:nonlinear etak negative} by \((k+1)^{2m-4} \frac{\mathrm d^{2}}{\mathrm dt^{2}} \eta_{2,k}^{-} (t)\), then sum up over \( k = 1, 2, \ldots,n\). After applying integration by parts, we obtain
\begin{align*}
    \| \partial_{t} \partial_{\theta}^{m-2} u_{n}^{+} \|_{L^{2}} + \| \partial_{t} \partial_{\theta}^{m-2} u_{n}^{+} \|_{L^{2}} & = \left\langle \partial_{\theta}^{m-2} L_{1}^{+}(u_{n}^{+}), \partial_{t} \partial_{\theta}^{m-2} u_{n}^{+} \right\rangle + \left\langle \partial_{\theta}^{m-2} L_{1}^{-}(u_{n}^{-}), \partial_{t} \partial_{\theta}^{m-2} u_{n}^{-} \right\rangle\\
    & \quad + \left\langle \partial_{\theta}^{m-1} \mathcal{N}_{1} (u_{n}^{\pm}), \partial_{t} \partial_{\theta}^{m-2} u_{n}^{+} \right\rangle + \left\langle \partial_{\theta}^{m-2} \mathcal{N}_{2} (u_{n}^{\pm}), \partial_{t} \partial_{\theta}^{m-2} u_{n}^{-} \right\rangle.
\end{align*}
By combining this with \cref{eq:estimate-nonlinear un}, we obtain
\begin{align}\label{eq:estimate-nonlinear pttun}
    \sup_{0 \leq t \leq T_{0}} \| \partial_{t}^{2} \partial_{\theta}^{m-2} u_{n}^{\pm} (t) \|_{L^{2}} \leq C e^{2C_{{1}}T_{{0}}} \left( \| u_{0}^{+} \|_{H^{m}}^{2} + \| u_{0}^{-} \|_{H^{m}}^{2} \right),
\end{align}
where \(C\) is a positive constant independent of \(n\).

\uline{Step 3.} \emph{{Existence and uniqueness.}}
By combing the uniform estimates established in  \cref{eq:estimate-nonlinear un}, \cref{eq:estimate-nonlinear ptun} and \cref{eq:estimate-nonlinear pttun}, the local existence interval \([0,T(k)]\) derived in Step 1 of \cref{ssec:nonlinear-wp} can be extended to \([0, T_{0}]\). In particular, there exists a subsequence \(\{u_{n_{l}}\}_{n_{l} = 1}^{\infty} \subset \{u_{n}\}_{n = 1}^{\infty}\), such that
\begin{align*}
         u_{n_{l}}^{\pm} &\rightharpoonup u^{\pm}\ &&\text{weakly in } L^{2}( [0, T_{0}] ; H^{m}(\T));\\
         \partial_{t} u_{n_{l}}^{\pm} &\rightharpoonup \partial_{t} u^{\pm}\ &&\text{weakly in } L^{2}( [0, T_{0}] ; H^{m-1}(\T));\\
         \partial_{t}^{2} u_{n_{l}}^{\pm} &\rightharpoonup \partial_{t}^{2} u^{\pm}\ &&\text{weakly in } L^{2}( [0, T_{0}] ; H^{m-2}(\T)).  
     \end{align*}
     Since \( \partial_{\theta} \eta_{n}^{\pm} = - 2 \sin(\theta) u_{n}^{\pm} \) and \( \eta^{\pm} \) are odd, it follows that
     \begin{align*}
         \eta_{n_{l}}^{\pm} &\rightharpoonup \eta^{\pm} &&\text{weakly in } L^{2}( [0, T_{0}] ; H^{m+1}(\T));\\
         \partial_{t} \eta_{n_{l}}^{\pm} &\rightharpoonup \partial_{t} \eta^{\pm} &&\text{weakly in } L^{2}( [0, T_{0}] ; H^{m}(\T));\\
         \partial_{t}^{2} \eta_{n_{l}}^{\pm} &\rightharpoonup \partial_{t}^{2} \eta^{\pm} &&\text{weakly in } L^{2}( [0, T_{0}] ; H^{m-1}(\T)).  
     \end{align*}
     The remainder of the argument follows along the same lines as in Step 3 of the proof of \cref{the:existence} and is therefore omitted. Furthermore, combining \cref{eq:estimate-nonlinear un}, \cref{eq:estimate-nonlinear ptun}, and \cref{eq:estimate-nonlinear pttun}, we conclude that
     \begin{align*}
         \sup_{0 \leq t \leq T_{0}} \| u^{\pm} (t)\|_{H^{m}}, \ \sup_{0 \leq t \leq T_{0}} \| \partial_{t} u^{\pm} (t)\|_{H^{m}}, \ \sup_{0 \leq t \leq T_{0}} \| \partial_{t}^{2} u^{\pm} (t)\|_{H^{m}} \leq C (T_{0}) \delta,
     \end{align*}
thus completing the proof of \cref{le:nonlinear existence}.
\end{proof}

\section{Instability for the nonlinear problem}
\label{sec:instability-nonlinear}

In this section, we prove \cref{the:instability_nonlinear}. To this end, we first construct a solution to the linearized problem, together with a family of solutions to the nonlinear system, each satisfying carefully chosen structural properties. Based on these constructions, we then employ a contradiction argument to demonstrate the existence of a nonlinear solution exhibiting the instability described in \cref{the:instability_nonlinear}. Our proof follows the strategy of  \cite{JJN2013}.

\begin{proof}[Proof of \cref{the:instability_nonlinear}]
     \uline{Step 1.} \emph{{Construction of a solution to the linearized problem.}}
    By \crefrange{the:existence}{the:instability_linearized}, we can construct a classical solution \(\eta^{\pm}\) to the linearized system \cref{eq:linearized}. We assume that the initial data satisfy the assumptions of \cref{the:instability_linearized}, and, in addition, satisfy
    \begin{align*}
        0 \leq \left\langle L^{+} \eta_{0}^{+}, \eta_{0}^{+} \right\rangle_{\mathcal{H}_{2}} \leq \sqrt{\lambda_{1}} \left\langle\eta_{0}^{+}, \eta_{0}^{+} \right\rangle_{\mathcal{H}_{2}}.
    \end{align*}
    It follows that
    \begin{align*}
        \| \eta^{+} (t) \|_{\mathcal{H}_{2}}  > \frac{1}{2} \| \eta_{0}^{+} \|_{\mathcal{H}_{2}} e^{\sqrt{\lambda_{1}} t}.
    \end{align*}
    To simplify the notation, we set \( u (t, \theta) \coloneqq \rho_2^{1/2} \partial_{\theta} \eta^{\pm} (t, \theta) \), so that \( \| u \|_{L^{2}} = \| \rho_2^{1/2} \partial_{\theta} \eta^{\pm} \|_{L^{2}} = \| \eta^{\pm} \|_{\mathcal{H}_{2}}\). We further introduce the rescaled functions
    \begin{align*}
	\check{\eta}^{\pm} (t, \theta) \coloneqq \frac{\delta \eta^{\pm} (t, \theta)}{I_{m} (0)},
    \end{align*}
    and
    \begin{align}\label{eq:def-tilde u}
        \check{u}^{\pm} (t, \theta) \coloneqq \rho_2^{1/2} \partial_{\theta} \check{\eta}^{\pm} (t, \theta) = \frac{\delta u^{\pm} (t, \theta)}{I_{m} (0)},
    \end{align}
    it implies that
    \begin{align*}
        \| \check{u}^{\pm} (0) \|_{H^{m}} = \delta.
    \end{align*}
    It follows directly from the linearity of the system \cref{eq:linearized} that
     \(\check{u}^{\pm} 
     \) remains a classical solution of the linearized problem, inheriting all the regularity and structural properties of the original solution \(\eta^{\pm} \), which means that
     \begin{align*}
         \| \check{\eta}^{+}(t)\|_{\mathcal{H}_{2}} > \frac{1}{2} \|\check{\eta}^{+} (0) \|_{\mathcal{H}_{2}} e^{\sqrt{\lambda}_{1} t}, \qquad \text{for $t>0$.}
     \end{align*}
 Let
     \begin{align*}
         t_{K} \coloneqq \frac{1}{\sqrt{\lambda_{1}}} \ln \left( \frac{4K\delta}{\|\tilde{u}^{+} (0) \|_{L^{2}}} \right).
     \end{align*}
     Then, we obtain
     \begin{align}\label{eq:assumption}
         \| \check{u}^{+} (t_{K})\|_{L^{2}} = \| \check{\eta}^{+} (t_{K})\|_{\mathcal{H}_{2}} > \frac{1}{2} \|\check{\eta}^{+} (0) \|_{\mathcal{H}_{2}} e^{\sqrt{\lambda}_{1} t_{K}} = 2K \delta.
     \end{align}

     \uline{Step 2.} \emph{{Construction of a solution to the nonlinear problem.}}
     Based on the initial data \( \check{u} (0)\) of the solution \(\check{u}  \) defined in \cref{eq:def-tilde u}, we proceed to construct a family of solutions to the perturbed nonlinear problem \cref{eq:perturbations2}. Specifically, we define
     \begin{align*}
         \bar{\eta}_{\varepsilon}^{+} (0, \theta) \coloneqq \varepsilon \tilde{\eta}^{+} (0, \theta)
     \end{align*}
     and set
    \begin{align*}
        \bar{u}_{\varepsilon}^{+} (0, \theta) \coloneqq \rho_2^{1/2} \partial_{\theta} \bar{\eta}_{\varepsilon}^{+} (0, \theta) = \varepsilon \tilde{u} (0, \theta), \qquad \text{for }  0 < \varepsilon < 1,
    \end{align*}
    so that
    \begin{align*}
        \| \bar{u}_{\varepsilon}^{+} (0, \theta) \|_{H^{m}} = \varepsilon \delta < \delta, \qquad \text{for } m > 3.
    \end{align*}
    By applying \cref{le:nonlinear existence}, we obtain that there exists a constant \(\varepsilon_{1} > 0\) such that, for all \( \varepsilon \in (0, \varepsilon_1)\), the nonlinear system \cref{eq:perturbations2} admits a classical solution \(\bar{\eta}_{\varepsilon}^{\pm}\) on \((0, T_{\varepsilon})\), where
    \begin{align*}
        T_{\varepsilon} \coloneqq \frac{1}{C_{{1}}} \ln \left( 1 + \frac{C_{{1}}}{2C_{{2}} \varepsilon \delta} \right) > t_{K}.
    \end{align*}
    Moreover, obtain the following uniform-in-time estimates:
    \begin{align}\label{eq:estimate-nonlinear solution}
        \sup_{0\leq t\leq t_{K}} \|\bar{u}^{\pm}_{\varepsilon} (t) \|_{H^{m}}, \sup_{0\leq t\leq t_{K}} \|\partial_{t} \bar{u}^{\pm}_{\varepsilon} (t) \|_{H^{m-1}}, \sup_{0\leq t\leq t_{K}} \|\partial_{t}^{2} \bar{u}^{\pm}_{\varepsilon} (t) \|_{H^{m-2}}  \leq C (t_{K}) \delta,
    \end{align}
    where we defined \(\bar{u}^{\pm}_{\varepsilon} (t,\theta) \coloneqq \rho_2^{1/2} \partial_{\theta} \bar{\eta}^{\pm}_{\varepsilon} (t,\theta)\). 

         \uline{Step 3.} \emph{Contradiction argument.} We prove \cref{the:instability_nonlinear} by contradiction. Let us suppose that for any \(\varepsilon \in (0,\varepsilon_{1})\), the classical solution \((\bar{\eta}_{\varepsilon}^{\pm}, \bar{v}^{\pm}_{\varepsilon}) \), emanating from the initial data \(\bar{\eta}_{\varepsilon}^{\pm} (0) \), satisfies
    \begin{align*}
    \|\bar{\eta}^{+}_{\varepsilon} (t) \|_{\mathcal{H}_{2}} = \|\bar{u}_{\varepsilon}^{+} (t) \|_{L^{2}} \leq  F(I_{m,\varepsilon} (0) ),
    \end{align*}
    where \(I_{m,\varepsilon}^{2} (0) \coloneqq \|\bar{u}_{\varepsilon}^{+}(0)\|_{H^{m}}^{2} + \|\bar{u}_{\varepsilon}^{-}(0)\|_{H^{m}}^{2} \).
    Then, by applying the definition of \(F\) in \cref{eq:def_F}, we obtain 
    \begin{align*}
        \sup_{0\leq t \leq t_{k}} \|\bar{u}_{\varepsilon}^{+} (t) \|_{L^{2}} \leq \varepsilon K \delta.
    \end{align*}
    Next, we define
    \begin{align*}
        (\tilde{\eta}^{\pm}, \tilde{v}^{\pm} ) \coloneqq \frac{1}{\varepsilon} (\bar{\eta}^{\pm}_{\varepsilon}, \bar{v}^{\pm}_{\varepsilon})
    \end{align*}
    Then, \(\tilde{\eta}^{\pm}, \tilde{v}^{\pm} \) satisfy the following system:
    \begin{align}\label{eq:tilde eta}
\begin{cases}
\partial_t \tilde{\eta}^{+}  = \{\frac{1}{2} \tilde{\eta}^{+} + \tilde{v}^{+}, \sin(2\theta)\} + \varepsilon N_{1} (\tilde{\eta}^{\pm} ), & t >0, \ \theta \in \T, \\
\partial_t \tilde{\eta}^{-}  = \{\frac{1}{2} \tilde{\eta}^{-} - \tilde{v}^{-}, \sin(2\theta)\} + \varepsilon N_{2} (\tilde{\eta}^{\pm} ) , & t >0, \ \theta \in \T,\\
\partial_{\theta} \tilde{v}^{\pm}  = H\tilde{\eta}^{\pm}, & t >0, \ \theta \in \T,
\end{cases} 
\end{align}
with the initial value 
\begin{align*}
\tilde{\eta}^{\pm}_{\varepsilon}(0) = \frac{1}{\varepsilon} \bar{\eta}^{\pm}_{\varepsilon}(0) = \check{\eta} (0),
\end{align*}
where
\begin{align*}
     N_{1} (\tilde{\eta}^{\pm} ) &\coloneqq \{\tilde{\eta}^{+}, \tilde{v}^{+}\} - \{\tilde{\eta}^{-}, \tilde{v}^{-}\},\\
     N_{2} (\tilde{\eta}^{\pm} ) &\coloneqq \{\tilde{\eta}^{-}, \tilde{v}^{+}\} - \{\tilde{\eta}^{+}, \tilde{v}^{-}\}.
\end{align*}
Moreover, by \cref{eq:estimate-nonlinear solution}, the following uniform estimates hold:
\begin{align*}
    \sup_{0 \leq t \leq t_{K}} \| \tilde{u}^{\pm} (t)\|_{H^{m}}, \ \sup_{0 \leq t \leq t_{K}} \| \partial_{t} \tilde{u}^{\pm} (t)\|_{H^{m}}, \ \sup_{0 \leq t \leq t_{K}} \| \partial_{t}^{2} \tilde{u}^{\pm} (t)\|_{H^{m}} \leq C (t_{K}) \delta,
\end{align*}
which are independent of \(\varepsilon\). As a consequence, by standard compactness arguments (see \cite[Section 1.4.5]{NS2004}  for further details), we may extract a subsequence (not relabeled) from \(\{\bar{\eta}^{\pm}_{\varepsilon}\}_{\varepsilon>0}\) such that
     \begin{align*}
         \tilde{u}_{\varepsilon}^{\pm} &\rightharpoonup \tilde{u}^{\pm}\ &&\text{weakly in } L^{2}( [0, t_{K}] ; H^{m}(\T));\\
         \partial_{t} \tilde{u}_{\varepsilon}^{\pm} &\rightharpoonup \partial_{t} \tilde{u}^{\pm}\ &&\text{weakly in } L^{2}( [0, t_{K}] ; H^{m-1}(\T));\\
         \partial_{t}^{2} \tilde{u}_{\varepsilon}^{\pm} &\rightharpoonup \partial_{t}^{2} \tilde{u}^{\pm}\ &&\text{weakly in } L^{2}( [0, t_{K}] ; H^{m-2}(\T))
     \end{align*}
     and
     \begin{align}\label{eq:estimate_tilde u}
         \sup_{0\leq t\leq t_{K}} \| \tilde{u}^{\pm} (t)\|_{L^{2}} \leq K \delta, \quad \tilde{u}^{\pm} (t,\theta) \in C([0,t_{K}], H^{m}(\T)) \cap C^{2}([0,t_{K}],H^{m-2}(\T)).
     \end{align}
     In fact, since \( \partial_{\theta} \tilde{\eta}_{\varepsilon}^{\pm} = - 2 \sin(\theta) \tilde{u}_{\varepsilon}^{\pm} \) and \( \tilde{\eta}_{\varepsilon}^{\pm} \) are odd functions, we deduce that
     \begin{align*}
         \tilde{\eta}_{\varepsilon}^{\pm} &\rightharpoonup \tilde{\eta}^{\pm} &&\text{weakly in } L^{2}( [0, t_{K}] ; H^{m+1}(\T));\\
         \partial_{t} \tilde{\eta}_{\varepsilon}^{\pm} &\rightharpoonup \partial_{t} \tilde{\eta}^{\pm} &&\text{weakly in } L^{2}( [0, t_{K}] ; H^{m}(\T));\\
         \partial_{t}^{2} \tilde{\eta}_{\varepsilon}^{\pm} &\rightharpoonup \partial_{t}^{2} \tilde{\eta}^{\pm} &&\text{weakly in } L^{2}( [0, t_{K}] ; H^{m-1}(\T)).  
     \end{align*}
     Passing to the limit \(\varepsilon \to 0\) in the \cref{eq:tilde eta}, we obtain the linearized system
     \begin{align*}
       \begin{cases}
         \partial_t \tilde{\eta}^{+} = \{\frac{1}{2} \tilde{\eta}^{+} + \tilde{v}^{+}, \sin(2\theta)\}, & t >0, \ \theta \in \T, \\
         \partial_t \tilde{\eta}^{-}  = \{\frac{1}{2} \tilde{\eta}^{-} - \tilde{v}^{-}, \sin(2\theta)\}, & t >0, \ \theta \in \T,\\
         \partial_{\theta} \tilde{v}^{\pm}  = H\tilde{\eta}^{\pm}, & t >0, \ \theta \in \T,
       \end{cases} 
     \end{align*}
     Therefore, \(\tilde{\eta}^{\pm}\) is a classical solution to the linearized problem \cref{eq:linearized} for \(t \in [0,t_{K}]\), with initial data coinciding with \(\check{\eta}^{\pm} (0) \).  By the uniqueness result in \cref{the:existence}, it follows that
     \begin{align}\label{eq:tildecheck}
         \tilde{\eta}^{\pm} (t,\theta) = \check{\eta}^{\pm} (t,\theta), \qquad (t,\theta) \in   [0,t_{K}] \times \T.
     \end{align}
     Thus, combining \cref{eq:tildecheck} with \cref{eq:assumption} and \cref{eq:estimate_tilde u}, we deduce
     \begin{align*}
         2K \delta \leq \|\check{\eta}^{+} (t_{K}) \|_{\mathcal{H}_{2}} = \|\tilde{\eta}^{+} (t_{K})\|_{\mathcal{H}_{2}} \leq K \delta,
     \end{align*}
     which yields a contradiction, thus completing the proof of \cref{the:instability_nonlinear}. 
\end{proof}

\section{Global well-posedness and stability for the nonlinear problem}
\label{sec:stability-nonlinear}

In this section, we prove the stability result in \cref{the:stability_nonlinear}. We build on the discussion in \cref{rk:eta+stab}.

\begin{proof}[Proof of \cref{the:stability_nonlinear}]
    Taking the weighted \(\rho_2\)-inner product with \(\eta^{+}\) on the both sides of the first equation of the system \cref{eq:linearized}, and the weighted \(\rho_2\)-inner product with \(\eta^{-}\) on the both sides of the second equation of the system \cref{eq:linearized}, we obtain     (as in  \cref{ssec:lineareta-} and \cref{rk:eta+stab}) 
    \begin{align*}
        &\frac{1}{2} \frac{\mathrm{d}}{\mathrm{d}t} \left(\|\eta^{+} (t) \|_{\mathcal{H}_{2}}^{2} + \|\eta^{-} (t) \|_{\mathcal{H}_{2}}^{2} \right)\\
        & = \left\langle L_{1}^{+} \eta^{+}, \eta^{+} \right\rangle_{\mathcal{H}_{2}} + \left\langle L_{1}^{-} \eta^{-}, \eta^{-} \right\rangle_{\mathcal{H}_{2}} + \left\langle N_{1}, \eta^{+} \right\rangle_{\mathcal{H}_{2}} + \left\langle N_{2}, \eta^{-} \right\rangle_{\mathcal{H}_{2}}\\
        & < - \frac{3}{8} \left(\|\eta^{+} (t) \|_{\mathcal{H}_{2}}^{2} + \|\eta^{-} (t) \|_{\mathcal{H}_{2}}^{2} \right)  + \left\langle N_{1}, \eta^{+} \right\rangle_{\mathcal{H}_{2}} + \left\langle N_{2}, \eta^{-} \right\rangle_{\mathcal{H}_{2}}.
    \end{align*}

    For the nonlinear terms, we have
    \begin{align*}
        \left\langle N_{1}, \eta^{+} \right\rangle_{\mathcal{H}_{2}} + \left\langle N_{2}, \eta^{-} \right\rangle_{\mathcal{H}_{2}} 
         & = \frac{1}{4\pi} \int_{\T} \frac{\left(\eta^{+} \partial_{\theta}^{2} v^{+} - \partial_{\theta}^{2}\eta^{+} v^{+} - \eta^{-} \partial_{\theta}^{2} v^{-} + \partial_{\theta}^{2} \eta^{-} v^{-}\right)\partial_{\theta}\eta^{+}}{\sin^{2} (\theta)} \mathrm{d} \theta\\
         & \quad + \frac{1}{4\pi} \int_{\T} \frac{\left(\eta^{-} \partial_{\theta}^{2} v^{+} - \partial_{\theta}^{2}\eta^{-} v^{+} - \eta^{+} \partial_{\theta}^{2} v^{-} + \partial_{\theta}^{2} \eta^{+} v^{-}\right)\partial_{\theta}\eta^{-}}{\sin^{2} (\theta)} \mathrm{d} \theta\\
         & \coloneqq K_{1} + K_{2} + K_{3},
    \end{align*}
    where
    \begin{align*}
        & K_{1} \coloneqq - \frac{1}{4\pi} \int_{\T} \frac{v^{+} \partial_{\theta}^{2} \eta^{+} \partial_{\theta} \eta^{+}}{\sin^{2}(\theta)} - \frac{1}{4\pi} \int_{\T} \frac{v^{+} \partial_{\theta}^{2} \eta^{-} \partial_{\theta} \eta^{-}}{\sin^{2}(\theta)}, \\
        & K_{2} \coloneqq  \frac{1}{4\pi} \int_{\T} \frac{\eta^{+} \partial_{\theta}^{2} v^{+} \partial_{\theta}\eta^{+}}{\sin^{2}(\theta)} - \frac{1}{4\pi} \int_{\T} \frac{\eta^{-} \partial_{\theta}^{2} v^{-} \partial_{\theta}\eta^{+}}{\sin^{2}(\theta)} + \frac{1}{4\pi} \int_{\T} \frac{\eta^{-} \partial_{\theta}^{2} v^{+} \partial_{\theta}\eta^{-}}{\sin^{2}(\theta)} - \frac{1}{4\pi} \int_{\T} \frac{\eta^{+} \partial_{\theta}^{2} v^{-} \partial_{\theta}\eta^{-}}{\sin^{2}(\theta)},\\
        & K_{3} \coloneqq \frac{1}{4\pi} \int_{\T} \frac{v^{-} \partial_{\theta}^{2} \eta^{-} \partial_{\theta} \eta^{+}}{\sin^{2}(\theta)} + \frac{1}{4\pi} \int_{\T} \frac{v^{-} \partial_{\theta}^{2} \eta^{+} \partial_{\theta} \eta^{-}}{\sin^{2}(\theta)}.
    \end{align*}
    By applying H\"{o}lder's inequality and \cref{le:hilbert estimate}, we obtain
    \begin{align*}
        K_{1} & = \frac{1}{4\pi} \int_{\T} \partial_{\theta}\left(\frac{v^{+}}{\sin^{2}(\theta)}\right) \left( (\partial_{\theta} \eta^{+})^{2} + (\partial_{\theta} \eta^{-})^{2} \right) \mathrm{d} \theta\\
        & = \frac{1}{4\pi} \int_{\T} \partial_{\theta} v^{+} \frac{1}{\sin^{2}(\theta)} \left( (\partial_{\theta} \eta^{+})^{2} + (\partial_{\theta} \eta^{-})^{2} \right) \mathrm{d} \theta 
        \\ & \quad - \frac{1}{2\pi} \int_{\T} \frac{v^{+}}{\sin(\theta)} \frac{1}{\sin^{2}(\theta)} \left( (\partial_{\theta} \eta^{+})^{2} + (\partial_{\theta} \eta^{-})^{2} \right) \mathrm{d} \theta\\
        & \lesssim \|\partial_{\theta} v^{+}\|_{L^{\infty}} \left(\|\eta^{+} (t) \|_{\mathcal{H}_{2}}^{2} + \|\eta^{-} (t) \|_{\mathcal{H}_{2}}^{2} \right)\\
        & \lesssim \| H (\partial_{\theta} \eta^{+})\|_{L^{2}} \left(\|\eta^{+} (t) \|_{\mathcal{H}_{2}}^{2} + \|\eta^{-} (t) \|_{\mathcal{H}_{2}}^{2} \right)\\
        & \lesssim \|\partial_{\theta}\eta^{+} (t)\|_{L^{2}}\left(\|\eta^{+} (t) \|_{\mathcal{H}_{2}}^{2} + \|\eta^{-} (t) \|_{\mathcal{H}_{2}}^{2} \right)\\
        & \lesssim \left(\|\eta^{+} (t) \|_{\mathcal{H}_{2}}^{2} + \|\eta^{-} (t) \|_{\mathcal{H}_{2}}^{2} \right)^{3/2}.
    \end{align*}
    Similarly, we have
    \begin{align*}
        K_{3} & = \frac{1}{4\pi} \int_{\T} \frac{v^{-} \partial_{\theta}(\partial_{\theta} \eta^{+} \partial_{\theta} \eta^{-})}{\sin^{2}(\theta)} \mathrm{d} \theta = - \frac{1}{4\pi} \int_{\T} \partial_{\theta}\left(\frac{v^{-}}{\sin^{2}(\theta)}\right) \partial_{\theta} \eta^{+} \partial_{\theta} \eta^{-} \mathrm{d} \theta\\
        & \lesssim \|\partial_{\theta} v^{-}\|_{L^{\infty}} \left(\|\eta^{+} (t) \|_{\mathcal{H}_{2}}^{2} + \|\eta^{-} (t) \|_{\mathcal{H}_{2}}^{2} \right)\\
        & \lesssim \left(\|\eta^{+} (t) \|_{\mathcal{H}_{2}}^{2} + \|\eta^{-} (t) \|_{\mathcal{H}_{2}}^{2} \right)^{3/2}.
    \end{align*}
    Finally, applying \cref{le:hilbert estimate}, \cref{le:hardy type}, and H\"{o}lder's inequality, we obtain
    \begin{align*}
        K_{2} & \lesssim \bigg\| \frac{\eta^{+}}{\sin(\theta)}\bigg\|_{L^{\infty}} \left( \|\partial_{\theta}^{2} v^{+}\|_{L^{2}} \bigg\|\frac{\partial_{\theta}\eta^{+}}{\sin(\theta)}\bigg\|_{L^{2}} + \|\partial_{\theta}^{2} v^{-}\|_{L^{2}} \bigg\|\frac{\partial_{\theta}\eta^{-}}{\sin(\theta)}\bigg\|_{L^{2}}\right)\\
        & \quad + \bigg\| \frac{\eta^{-}}{\sin(\theta)}\bigg\|_{L^{\infty}} \left( \|\partial_{\theta}^{2} v^{-}\|_{L^{2}} \bigg\|\frac{\partial_{\theta}\eta^{+}}{\sin(\theta)}\bigg\|_{L^{2}} + \|\partial_{\theta}^{2} v^{+}\|_{L^{2}} \bigg\|\frac{\partial_{\theta}\eta^{-}}{\sin(\theta)}\bigg\|_{L^{2}}\right)\\
        & \lesssim \left(\|\eta^{+} (t) \|_{\mathcal{H}_{2}}^{2} + \|\eta^{-} (t) \|_{\mathcal{H}_{2}}^{2} \right)^{3/2}.
    \end{align*}
    Then, it follows that
    \begin{align*}
        \frac{1}{2} \frac{\mathrm{d}}{\mathrm{d}t} \left(\|\eta^{+} (t) \|_{\mathcal{H}_{2}}^{2} + \|\eta^{-} (t) \|_{\mathcal{H}_{2}}^{2} \right) \leq - \frac{1}{2} \left(\|\eta^{+} (t) \|_{\mathcal{H}_{2}}^{2} + \|\eta^{-} (t) \|_{\mathcal{H}_{2}}^{2} \right) + \left(\|\eta^{+} (t) \|_{\mathcal{H}_{2}}^{2} + C \|\eta^{-} (t) \|_{\mathcal{H}_{2}}^{2} \right)^{3/2},
    \end{align*}
    for some absolute constant \(C>0\). We proceed by employing a standard bootstrap argument. Let
    \begin{align*}
        I_{0}^{2} (t) \coloneqq \|\eta^{+} (t) \|_{\mathcal{H}_{2}}^{2} + \|\eta^{-} (t) \|_{\mathcal{H}_{2}}^{2},
    \end{align*}
    and fix \(\varepsilon > 0\) to be chosen sufficiently small later. We assume that
    \begin{align*}
        I_{0}^{2} (t) \leq 2\varepsilon^{2}, \qquad  \text{for \(t\geq 0\).}
    \end{align*}
    It then follows that
    \begin{align*}
        \left(\|\eta^{+} (t) \|_{\mathcal{H}_{2}}^{2} +  \|\eta^{-} (t) \|_{\mathcal{H}_{2}}^{2} \right)^{3/2} \leq (2C\varepsilon^{2})^{3/2} = 2^{3/2} C^{3/2} \varepsilon^{3},
    \end{align*}
    which implies
    \begin{align}\label{eq:bootstrap assumption}
        \frac{\mathrm{d}}{\mathrm{d} t} I_{0}^{2} (t)\leq - I_{0}^{2} (t) +  2^{3/2} C^{3/2} \varepsilon^{3}.
    \end{align}
   We now choose \(\varepsilon > 0\) sufficiently small such that 
    \begin{align*}
        2^{3/2} C^{3/2} \varepsilon^{3} \leq \frac{1}{2} \varepsilon^{2}.
    \end{align*}
    It then follows that whenever \(I_{0}^{2} (t) \geq \varepsilon^{2}\), we obtain
    \begin{align*}
        \frac{\mathrm{d}}{\mathrm{d} t} I_{0}^{2} (t) \leq - \frac{1}{2} I_{0}^{2} (t),
    \end{align*}
    which implies exponential decay of \(I_{0}^{2} (t)\) until it falls below \(\varepsilon^{2}\). Once in the regime where \(I_{0}^{2} (t) < \varepsilon^{2}\), we return to the general differential inequality \cref{eq:bootstrap assumption}, which yields
    \begin{align*}
        - I_{0}^{2} (t) +  2^{3/2} C^{3/2} \varepsilon^{3} \leq - \varepsilon^{2} + \frac{1}{2}\varepsilon^{2} = -\frac{1}{2}\varepsilon^{2} < 0.
    \end{align*}
    Thus, the bootstrap assumption is preserved by continuity. As a consequence, if the initial data satisfy
    \begin{align*}
      I_{0}^{2} (0) = \|\eta^{+} (0) \|_{\mathcal{H}_{2}}^{2} + \|\eta^{-} (0) \|_{\mathcal{H}_{2}}^{2} \leq \varepsilon^{2},    
    \end{align*}
     then we obtain
    \begin{align*}
        I_{0}^{2} (t) = \|\eta^{+} (t) \|_{\mathcal{H}_{2}}^{2} + \|\eta^{-} (t) \|_{\mathcal{H}_{2}}^{2}  \lesssim \varepsilon^{2}, \qquad \text{for all \(t>0\).}
    \end{align*}
 This completes the proof of \cref{the:stability_nonlinear}.
\end{proof}

\appendix

\section{Technical lemmas}
\label{app:tech}

In this appendix, we collect several preliminary results that have been used throughout the paper. 

    We start by recalling several basic properties of the Hilbert transform (whose proof can be found, for instance, in \cite[Example 9.2.3]{MR3244169}, \cite[Proposition 9.1.8]{BN1971}, \cite[Proposition 9.3.1]{BN1971}, and \cite[Theorem 9.1.3]{BN1971}, respectively).
    
    \begin{lemma}[Hilbert transform of sines and cosines] \label{le:Heo}
		For any \(a > 0\), the following formulas hold:
		\begin{align*}
			& H \sin(a\theta) = - \cos(a\theta),\\
			& H \cos(a\theta) = \sin(a\theta).
		\end{align*}
	\end{lemma}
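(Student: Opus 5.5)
We interpret $H$ in the statement of \cref{le:Heo} as the Hilbert transform on the real line,
\[
Hf(\theta)=\frac1\pi\,\mathrm{p.\,v.}\int_{\R}\frac{f(\vartheta)}{\theta-\vartheta}\,\mathrm d\vartheta
=\lim_{\varepsilon\to0,\ R\to\infty}\frac1\pi\int_{\varepsilon<|\theta-\vartheta|<R}\frac{f(\vartheta)}{\theta-\vartheta}\,\mathrm d\vartheta .
\]
This is the reading under which the formulas make sense for every real $a>0$, since $\sin(a\theta)$ and $\cos(a\theta)$ are not $2\pi$-periodic when $a\notin\mathbb N$. Its sign convention matches the periodic kernel $\frac1{2\pi}\cot\bigl(\frac{\theta-\vartheta}{2}\bigr)$ used in the paper, because $\frac12\cot(z/2)\sim 1/z$ as $z\to0$.

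For integer $a$ the two definitions agree. This follows from the Mittag-Leffler expansion
\[
\tfrac12\cot(z/2)=\lim_{N\to\infty}\sum_{|k|\le N}\frac{1}{z+2\pi k}.
\]
Splitting $\R$ into the translates $[-\pi,\pi]+2\pi k$ and using periodicity of $f$, symmetric truncation in $R$ reproduces the cotangent kernel. So the line computation below also covers the torus case used for the steady states.

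The plan is to compute $H$ on the complex exponential $e^{ia\theta}$ and then take real and imaginary parts; this is legitimate since $H$ is real-linear with a real kernel. Substituting $\vartheta=\theta-s$ in the truncated integral gives
\[
\frac1\pi\int_{\varepsilon<|s|<R}\frac{e^{ia(\theta-s)}}{s}\,\mathrm ds
= e^{ia\theta}\,\frac1\pi\int_{\varepsilon<|s|<R}\frac{\cos(as)-i\sin(as)}{s}\,\mathrm ds .
\]
The cosine part vanishes identically, since its integrand is odd on a symmetric domain. The sine part equals $-\frac{2i}{\pi}\int_\varepsilon^R\frac{\sin(as)}{s}\,\mathrm ds$. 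After rescaling $s\mapsto s/a$ (which uses $a>0$), this tends to $-\frac{2i}{\pi}\cdot\frac\pi2=-i$ by the Dirichlet integral $\int_0^\infty \frac{\sin t}{t}\,\mathrm dt=\frac\pi2$. Hence
\[
H\cos(a\theta)+iH\sin(a\theta)=-i\,e^{ia\theta}=\sin(a\theta)-i\cos(a\theta).
\]
Comparing real and imaginary parts gives $H\cos(a\theta)=\sin(a\theta)$ and $H\sin(a\theta)=-\cos(a\theta)$.

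The only delicate point is convergence, since the integrals are only conditionally convergent at $\infty$ and principal-value at $0$. I handle this by working throughout with the doubly truncated region $\{\varepsilon<|s|<R\}$. There the change of variables and the splitting into real and imaginary parts are finite-integral manipulations. Both limits then exist and are independent of how $\varepsilon\to0$ and $R\to\infty$, because the odd part vanishes exactly for each truncation and the Dirichlet integral converges as an improper integral. The sign of $a$ matters only in the rescaling step; for $a<0$ one would get the opposite signs.
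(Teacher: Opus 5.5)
Your proof is correct. The substitution $\vartheta=\theta-s$, the vanishing of the odd part $\cos(as)/s$ on the symmetric truncation, and the Dirichlet integral give $He^{ia\theta}=-i\,e^{ia\theta}$. You also fixed the sign properly by matching the orientation $\theta-\vartheta$ of the paper's kernel $\frac{1}{2\pi}\cot\frac{\theta-\vartheta}{2}$; a kernel $1/(\vartheta-\theta)$ would flip both formulas.

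The paper gives no argument of its own, only a citation to a textbook example for the Hilbert transform on $\R$. That example is the same computation you carry out, so your main argument is essentially the cited one.

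What you add is the reconciliation with the torus, and it is worthwhile. The paper's $H$ is the periodic operator, and ``for any $a>0$'' only makes sense on $\R$, exactly as you observe. Your Mittag-Leffler transfer is fine, with one small adjustment. Splitting $\R$ into the translates $[-\pi,\pi]+2\pi k$ gives a truncation symmetric about $0$ rather than about $\theta$. This changes the integral only by $O(|\theta|\,\|f\|_{L^\infty}/R)$, or you can avoid the issue by using the translates $[\theta-\pi,\theta+\pi]+2\pi k$ instead.

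Since the paper only ever applies the lemma to integer frequencies on $\T$, a shorter route exists there. You can read it off from the multiplier identity $\widehat{Hf}(k)=-i\,\mathrm{sign}(k)\hat f(k)$ stated right after the lemma. Alternatively, compute directly with $\cot(s/2)\sin(ks)=\tfrac12\bigl(D_k(s)+D_{k-1}(s)\bigr)$, where $D_n$ is the Dirichlet kernel. This gives $\frac{1}{2\pi}\int_{-\pi}^{\pi}\cot(s/2)\sin(ks)\,\mathrm ds=1$ with a bounded integrand, while the $\cos(ks)$ part vanishes by oddness. That avoids both the conditionally convergent integral at infinity and the transfer step. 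Your line computation, on the other hand, is what actually delivers the statement for all real $a>0$.
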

    
	\begin{lemma}[Fourier transform and Hilbert transform]
		Let \(f \in L^{p}\), with \(1 < p < \infty\), and assume \(f\) is \(2 \pi\)-periodic. Then the Fourier coefficient of the Hilbert transform \(H f\) at frequency \(k\) is given by
		\begin{align*}
			\widehat{H f}(k) = \big(-i\, \sgn k\big)\hat{f} (k).
		\end{align*}
	\end{lemma}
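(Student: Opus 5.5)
The identity is linear in $f$, and $H$ is bounded on $L^p(\mathbb T)$ for $1<p<\infty$. I will therefore verify it first on trigonometric polynomials and then extend it to all of $L^p$ by density and continuity.

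\emph{Step 1: trigonometric polynomials.} The basic input is \cref{le:Heo}, applied with integer frequency $a=k\ge1$: $H\sin(k\theta)=-\cos(k\theta)$ and $H\cos(k\theta)=\sin(k\theta)$. Writing $e^{ik\theta}=\cos(k\theta)+i\sin(k\theta)$ and extending $H$ complex-linearly, this gives
\[
H e^{ik\theta}=\sin(k\theta)-i\cos(k\theta)=-i\,e^{ik\theta}, \qquad k\ge 1.
\]
For $k\le -1$, the same computation with $e^{ik\theta}=\cos(|k|\theta)-i\sin(|k|\theta)$ yields $H e^{ik\theta}=i\,e^{ik\theta}$. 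For $k=0$, the kernel $\cot\bigl(\tfrac{\theta-\vartheta}{2}\bigr)$ is odd about $\vartheta=\theta$, so its principal-value integral over a full period vanishes and $H1=0$. Together these three cases say $\widehat{Hf}(k)=-i\,\sgn{k}\,\hat f(k)$ for every trigonometric polynomial $f$.

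\emph{Step 2: extension to $L^p$.} I will invoke the classical M.~Riesz theorem: $H$ is bounded on $L^p(\mathbb T)$ for $1<p<\infty$. This is the same boundedness already used via \cref{le:hilbert estimate}, and it is available from the same references as \cref{le:Heo}. Given $f\in L^p$, choose trigonometric polynomials $f_n\to f$ in $L^p$, for instance Fej\'er means. Then $Hf_n\to Hf$ in $L^p$. Since $|\hat g(k)|\le \frac{1}{2\pi}\|g\|_{L^1}\lesssim\|g\|_{L^p}$, each Fourier coefficient is continuous on $L^p$, so we can pass to the limit on both sides of the identity from Step 1 to obtain $\widehat{Hf}(k)=-i\,\sgn{k}\,\hat f(k)$.

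\emph{Main obstacle.} The only nontrivial ingredient is the $L^p$-boundedness of the principal-value operator, which I am citing rather than reproving. Because of this, I expect no step of the argument itself to be hard; the one detail I will check carefully is the sign convention in \cref{le:Heo}, so that it agrees with the multiplier $-i\,\sgn{k}$.
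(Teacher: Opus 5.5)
Your proof is correct, but there is no proof in the paper to compare it with: the paper states this lemma without argument and cites it to Butzer--Nessel. Your argument is the standard density proof.

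\begin{itemize}
\item On exponentials, \cref{le:Heo} at integer frequencies gives $He^{ik\theta}=-i\,\sgn{k}\,e^{ik\theta}$ for $k\neq 0$.
\item For $k=0$, oddness of the kernel gives $H1=0$.
\item You then pass to general $f$ using density of trigonometric polynomials in $L^p(\T)$, the bound of \cref{le:hilbert estimate}, and continuity of $g\mapsto\hat g(k)$ on $L^p(\T)\subset L^1(\T)$.
\end{itemize}

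Your sign bookkeeping is right. The kernel $\frac{1}{2\pi}\cot\bigl(\frac{\theta-\vartheta}{2}\bigr)$ is the usual conjugate-function kernel, and the case $k=1$ is confirmed by $\frac{1}{\pi}\int_0^\pi \cot(t/2)\sin t\,\mathrm dt=\frac{1}{\pi}\int_0^\pi(1+\cos t)\,\mathrm dt=1$.

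What your route adds over the bare citation is that it derives this lemma from the other two appendix facts, so it is not an independent input. You are also right to lean on the $L^p$ bound. It is what makes $\widehat{Hf}(k)$ meaningful in the first place: for $f\in L^p$ one otherwise only knows that $Hf$ exists a.e. and is weak-$L^1$. Since that bound is for the a.e.-defined principal-value operator, which is linear, the step $Hf_n\to Hf$ in $L^p$ is fully justified.
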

	\begin{lemma}[Fourier series and Hilbert transform]
		Let \(f \in L^{1}\) and \(2 \pi\)-periodic function such that \(H f \in L^{1}\) and is also \(2 \pi\)-periodic. Then 
		\begin{align*}
			H f (x) \sim \sum_{k = - \infty}^{\infty}\big(-i\, \sgn k\big) \hat{f} (k) e^{ikx}.
		\end{align*}
	\end{lemma}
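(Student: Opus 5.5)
The plan is to identify the Fourier coefficients of $Hf$ by passing through Abel--Poisson means. For $0<r<1$ let $P_r$ denote the Poisson kernel and $Q_r$ the conjugate Poisson kernel on $\T$:
\begin{align*}
P_r(t)=\frac{1}{2\pi}\sum_{k\in\mathbb Z} r^{|k|}e^{ikt},\qquad Q_r(t)=\frac{1}{2\pi}\sum_{k\in\mathbb Z}\big(-i\,\mathrm{sign}(k)\big)r^{|k|}e^{ikt}=\frac{1}{2\pi}\,\frac{2r\sin t}{1-2r\cos t+r^2}.
\end{align*}
Both series converge absolutely and uniformly, so for $f\in L^1$ we have $\widehat{Q_r*f}(k)=(-i\,\mathrm{sign}(k))\,r^{|k|}\hat f(k)$ and $\widehat{P_r*g}(k)=r^{|k|}\hat g(k)$ for any $g\in L^1$. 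The statement therefore reduces to the identity
\begin{align*}
P_r*(Hf)=Q_r*f\qquad\text{for every }0<r<1 .
\end{align*}
Indeed, comparing the $k$-th coefficients of the two sides and dividing by $r^{|k|}\neq 0$ gives $\widehat{Hf}(k)=(-i\,\mathrm{sign}(k))\hat f(k)$ for every $k$. This is exactly the assertion that $Hf\sim\sum_k(-i\,\mathrm{sign}(k))\hat f(k)e^{ikx}$, understood as a formal Fourier series.

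To prove the identity I would first treat smooth data. For trigonometric polynomials the identity follows directly from \cref{le:Heo}. It then extends to $C^1$ functions $g$, since for those $Hg$ is given by an absolutely convergent integral and $H$ commutes with convolution by $P_r$ (Fubini). Next I pass to general $f\in L^1$. The analytic function
\begin{align*}
F(z)=(P_r*f)(\theta)+i(Q_r*f)(\theta),\qquad z=re^{i\theta},
\end{align*}
is the Cauchy integral of $f$ with $\mathrm{Im}\,F(0)=0$. By the classical Fatou/Privalov theorem, $Q_r*f\to Hf$ almost everywhere as $r\to1$; equivalently, the nontangential boundary values of $F$ are $f+iHf$ almost everywhere.

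The main obstacle is to upgrade this almost-everywhere boundary behaviour to the representation $\mathrm{Im}\,F=P_r*(\mathrm{Im}\,F^*)$, where $F^*$ denotes the boundary function, using only the extra hypothesis $Hf\in L^1$. Pointwise convergence alone does not allow us to pass limits inside the coefficients. My first attempt is the following. By Kolmogorov's weak-type $(1,1)$ estimate for the conjugate function, $F$ belongs to $H^p$ for every $p<1$, and in particular to the Smirnov class $N^+$. A function in $N^+$ whose boundary values lie in $L^1$ belongs to $H^1$ (Smirnov's theorem). Hence $F\in H^1$, which gives $F=P_r*F^*$. Taking imaginary parts yields $Q_r*f=P_r*(Hf)$. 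This is the step where the hypothesis $Hf\in L^1$ is indispensable, so it is where I would concentrate the effort.

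If the Hardy-space machinery is to be avoided, a fallback is to work with the truncated transforms $H_\varepsilon f$ directly. One checks that $Q_r*f-P_r*(H_\varepsilon f)\to0$ in measure as $\varepsilon\to0$, using the weak $(1,1)$ bound. One then combines this with the $L^1$ integrability of $Hf$ and a uniform-integrability (Vitali-type) argument to identify the limit as $P_r*(Hf)$. In either route, the final coefficient comparison described in the first paragraph then concludes the proof.
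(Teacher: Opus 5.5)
The paper gives no proof of this lemma; it only cites \cite[Proposition~9.3.1]{BN1971}. Your main route is correct, and it is the classical Hardy-space proof:
\begin{itemize}
\item the reduction to $P_r*(Hf)=Q_r*f$ is exact;
\item $F=P_r*f+i\,Q_r*f$ lies in $H^p$ for every $p<1$ by Kolmogorov's theorem, uniformly in $r$, because $Q_r*f=H(P_r*f)$ (this identification is the only real job of your smooth-data step), hence $F\in N^+$;
\item by Privalov, the boundary function of $F$ is $f+iHf\in L^1$;
\item Smirnov's theorem then gives $F\in H^1$, so $F=P_r*F^*$, and taking imaginary parts finishes. For complex-valued $f$, treat real and imaginary parts separately.
\end{itemize}
Compared with the bare citation, your argument shows exactly where $Hf\in L^1$ is used, and it even yields $Q_r*f\to Hf$ in $L^1$. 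The price is importing the theorems of Privalov, Kolmogorov and Smirnov. For the paper's own purposes the lemma is only needed for functions in $L^2$ (trigonometric polynomials and $H^m$ functions). There a much cheaper argument suffices: the trigonometric-polynomial case (\cref{le:Heo}), density, and the $L^2$-boundedness of $H$. One small slip: $F$ is the Schwarz (Herglotz) integral of $f$, i.e.\ twice its Cauchy integral minus $\hat f(0)$, not the Cauchy integral itself.

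Your fallback, by contrast, does not close as written. Since $P_r*(H_\varepsilon f)=H_\varepsilon(P_r*f)$ and $P_r*f$ is smooth, $P_r*(H_\varepsilon f)\to Q_r*f$ uniformly, so that step needs no weak-type bound. All the content lies in showing $P_r*(H_\varepsilon f)\to P_r*(Hf)$. That requires uniform integrability of $\{H_\varepsilon f\}_{\varepsilon}$, and integrability of the a.e.\ limit $Hf$ does not provide it. In fact, by antisymmetry of the kernel, $\int H_\varepsilon f\,\psi=-\int f\,H_\varepsilon\psi$ for smooth $\psi$. So $H_\varepsilon f$ always converges as a distribution to the distribution with coefficients $(-i\,\mathrm{sign}(k))\hat f(k)$. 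Showing that this distribution coincides with the a.e.\ limit $Hf$ is precisely the lemma, so the fallback is circular unless you bring in something like Smirnov's theorem.
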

    
	\begin{lemma}[$L^p$-boundedness of the Hilbert transform] \label{le:hilbert estimate}
		The Hilbert transform \(H\) is a bounded linear operator from space \(L^{p}\) to \(L^{p}\) with \(1 < p < \infty\) and
		\begin{align*}
		    \| H f\|_{L^{p}}\leq C_{p} \| f \|_{L^{p}},
		\end{align*}
		where the constant \(C_{p} > 0\) depends only on \(p\).
	\end{lemma}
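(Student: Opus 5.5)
This is the classical M.~Riesz theorem on the circle, and I will prove it in three stages: the case $p=2$, then even exponents by an algebraic identity, then everything else by interpolation and duality.

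\emph{The case $p=2$.} By the preceding lemma, $\widehat{Hf}(k)=(-i\,\sgn k)\hat f(k)$, so Parseval's identity gives
\[
\|Hf\|_{L^2}=\|f-\hat f(0)\|_{L^2}\le \|f\|_{L^2}.
\]
Hence $C_2=1$.

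\emph{Even exponents $p=2^j$.} Here I use Riesz's algebraic trick, working first with real trigonometric polynomials $f$ and then passing to general $f$ by density. Replacing $f$ by $f-\hat f(0)$ changes $f$ and $Hf$ only by a bounded multiple of $\|f\|_{L^p}$, so I may assume $\hat f(0)=0$. The function $F\coloneqq f+iHf$ then has Fourier support in $\{k\ge 0\}$ with $\hat F(0)=0$. The power $F^{p}$ has the same property, so
\[
\int_{\T}F^{p}\,\mathrm d\theta=0.
\]
Taking the real part and expanding binomially shows that $\int (Hf)^p$ is a combination of the mixed terms $\int f^{p-2l}(Hf)^{2l}$ with $l<p/2$. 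Each mixed term is bounded by Hölder's inequality by $\|f\|_{L^p}^{p-2l}\|Hf\|_{L^p}^{2l}$. Setting $R\coloneqq \|Hf\|_{L^p}/\|f\|_{L^p}$, this yields a polynomial inequality $R^p\le \sum_{l<p/2}c_l R^{2l}$, which forces $R\le C_p$. The same argument works by induction from any $p$ to $2p$ if one prefers.

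\emph{All other exponents.} Riesz--Thorin interpolation between consecutive powers of $2$ covers every $2\le p<\infty$. For $1<p<2$ I use duality. The kernel $\cot\!\big(\tfrac{\theta-\vartheta}{2}\big)$ is antisymmetric, so $\langle Hf,g\rangle=-\langle f,Hg\rangle$ for trigonometric polynomials, which gives $\|H\|_{L^p\to L^p}=\|H\|_{L^{p'}\to L^{p'}}$. Density of trigonometric polynomials in $L^p$, together with the fact that the principal-value formula agrees with the multiplier definition on such polynomials, extends the bound to all $f\in L^p$.

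The main obstacle is the algebraic step for even $p$: one must check that the vanishing mean of $F^{p}$ really controls $(Hf)^p$. The $p=2$ computation and the interpolation and duality steps are routine.
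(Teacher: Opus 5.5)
Your argument is correct in substance, and it is M.~Riesz's original proof. You use Parseval for $p=2$, the identity $\int_{\T}(f+iHf)^{p}\,\mathrm d\theta=0$ for real mean-zero trigonometric polynomials for even exponents, Riesz--Thorin for $2\le p<\infty$, and the antisymmetry $\langle Hf,g\rangle=-\langle f,Hg\rangle$ for $1<p<2$. The paper gives no argument; it quotes the classical theorem from \cite[Theorem 9.1.3]{BN1971}. Your route gives a self-contained, elementary proof with explicit (non-sharp) constants, using only Parseval, H\"older, Riesz--Thorin and duality. The citation gives brevity and, more substantively, the textbook's treatment of the principal value for general $L^p$ functions, which is where your write-up is too quick. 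A minor point: Riesz--Thorin is a complex-scalar theorem while the paper's $L^p$ spaces are real. Since the kernel is real you can complexify at the cost of a factor $2$, so this is harmless.

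Your last sentence does not establish what it claims. A bound on trigonometric polynomials plus density only gives a bounded operator $\tilde H$ on $L^p$ that agrees with $H$ on a dense set. It does not show that the principal-value integral exists for a general $f\in L^p$ and equals $\tilde Hf$. Nothing you proved gives continuity of the principal-value operator: letting $\varepsilon\to0$ in the truncations $H_\varepsilon f_n$ uniformly in $n$ requires a bound on $\sup_{\varepsilon}|H_\varepsilon f|$.

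The cheapest repair within the paper is the preceding lemma, which you already used for $p=2$. It asserts $\widehat{Hf}(k)=-i\,\mathrm{sign}(k)\,\hat f(k)$ for every $f\in L^p$, tacitly with $Hf\in L^1$. If trigonometric polynomials $f_n\to f$ in $L^p$, then $Hf_n\to g$ in $L^p$ with $\hat g(k)=-i\,\mathrm{sign}(k)\,\hat f(k)=\widehat{Hf}(k)$ for all $k$. Hence $g=Hf$ a.e.\ by uniqueness of Fourier coefficients, and the estimate passes to $f$. Alternatively, define $H$ on $L^p$ as the multiplier operator from the outset. That is all the paper needs, since every application of the lemma in the paper is at $p=2$.
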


When dealing with the space $\mathcal H_2$ (whose definition is recalled in \cref{ssec:spaces}), the following Hardy-type inequality is useful (see~\cite[Lemma 6.1]{guo2025}).

    \begin{lemma}[Hardy-type inequality] \label{le:hardy type}
        Let us suppose that \(\rho_2^{1/2} f' \in L^{2} (\T)\), where \(f\) is an odd function satisfying \(f (0) = f (\pi) = 0\) and \(\rho_2 \coloneqq \frac{1}{4\pi \sin^2 \theta}\). Then the following estimate holds:
		\begin{align*}
			\bigg\| \frac{f}{\sin(\cdot)}\bigg\|_{L^{\infty}} \le C\ \bigg\| \frac{f'}{\sin(\cdot)} \bigg\|_{L^{2}},
		\end{align*}
        where the absolute constant $C>0$ depends on \(\T\).
    \end{lemma}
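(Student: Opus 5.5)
The statement to prove is the Hardy-type inequality \cref{le:hardy type}: $\|f/\sin\|_{L^\infty} \le C\,\|f'/\sin\|_{L^2}$ for odd $f$ with $f(0)=f(\pi)=0$. The plan is a fundamental theorem of calculus plus Cauchy--Schwarz argument, with the singular weight handled separately near each zero of $\sin$.

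\emph{Reduction.} Since $f$ is odd and $|\sin|$ is even, $|f/\sin|$ is even. It therefore suffices to bound $|f(\theta)|/\sin\theta$ for $\theta\in(0,\pi)$. We split this interval into $(0,\pi/2]$ and $[\pi/2,\pi)$. On $[\pi/2,\pi)$ we use the substitution $\theta\mapsto\pi-\theta$ together with $f(\pi)=0$, which makes the argument symmetric to the one on $(0,\pi/2]$.

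\emph{Main estimate.} Fix $\theta\in(0,\pi/2]$ and write, using $f(0)=0$,
\[
f(\theta)=\int_0^\theta f'(s)\,\mathrm d s=\int_0^\theta \frac{f'(s)}{\sin s}\,\sin s\,\mathrm d s .
\]
Applying Cauchy--Schwarz gives
\[
|f(\theta)|\le \Big\|\frac{f'}{\sin}\Big\|_{L^2}\Big(\int_0^\theta \sin^2 s\,\mathrm d s\Big)^{1/2}\le \Big\|\frac{f'}{\sin}\Big\|_{L^2}\Big(\int_0^\theta s^2\,\mathrm d s\Big)^{1/2}=\Big\|\frac{f'}{\sin}\Big\|_{L^2}\,\frac{\theta^{3/2}}{\sqrt3}.
\]
On $(0,\pi/2]$ we have $\sin\theta\ge 2\theta/\pi$. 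Dividing by $\sin\theta$ therefore yields
\[
\frac{|f(\theta)|}{\sin\theta}\le \frac{\pi}{2\sqrt3}\,\theta^{1/2}\,\Big\|\frac{f'}{\sin}\Big\|_{L^2}\le C\,\Big\|\frac{f'}{\sin}\Big\|_{L^2}.
\]

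\emph{The other half.} For $\theta\in[\pi/2,\pi)$ we instead write $f(\theta)=-\int_\theta^\pi f'(s)\,\mathrm d s$, using $f(\pi)=0$. The same Cauchy--Schwarz step, now with $\sin s\le \pi-s$ and $\sin\theta\ge 2(\pi-\theta)/\pi$, gives the identical bound. Taking the supremum over both halves proves the lemma with an absolute constant $C$.

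\emph{Justification and main obstacle.} The only delicate point is the use of the fundamental theorem of calculus. Since $\rho_2^{1/2}f'\in L^2$ and $|\sin|\le1$, we get $f'\in L^2\subset L^1$. Hence $f$ is absolutely continuous, and the boundary values $f(0)=f(\pi)=0$ are meaningful, which legitimizes both integral representations. Note that the decay $\int_0^\theta\sin^2\lesssim\theta^3$ exactly beats the $\theta^{-2}$ coming from $\sin^{-2}\theta$ in the squared quotient. This is why the estimate closes with no loss and why no weaker weight would suffice.
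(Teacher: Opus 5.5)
Your argument is correct and complete. The paper gives no proof of this lemma; it simply quotes \cite[Lemma 6.1]{guo2025}. Integrating $f'$ from the nearest zero of $\sin$ ($0$ or $\pi$), applying Cauchy--Schwarz against the weight, and using that $f/\sin$ is even is the standard way to prove it.

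Only your closing remark is inaccurate, and it does not affect the proof. The estimate closes with room to spare, since you gain a factor $|\theta|^{1/2}$, so $f/\sin$ in fact tends to $0$ at $0$ and $\pm\pi$. Moreover, the weaker weight $|\sin|^{-1}$ in place of $\sin^{-2}$ would already suffice, because $\int_0^\theta \sin s\,\mathrm d s\le \theta^2/2$ exactly balances the factor $\sin^{-2}\theta$.
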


Next, we present a comparison theorem for second-order ordinary differential equations (see \cite[Section 6]{MM1949}; cf.~also \cite{MR19813,MR16827}).

	\begin{lemma}[Comparison lemma for second order ODE] \label{le:comparison the}
		Let us consider the ordinary differential equation
		\begin{align}\label{eq:ODE2}
			y'' = p_{1}y'+p_{2}y+q, \qquad x\geq x_{0},
		\end{align}
		where \(p_{1}\), \(p_{2}\), and \(q\) are continuous functions on \( [x_{0},+\infty)\), and let \( y\) be a solution of \cref{eq:ODE2} with initial conditions
		\begin{align*}
			y(x_{0})=y_{0},\qquad y'(x_{0})=y'_{0}.
		\end{align*}
		Let us suppose that there exists a solution of
		\begin{align}\label{eq:uode}
			u'' = p_{1} u' + p_{2} u,
		\end{align}
		such that
		\begin{align}\label{eq:uode-c}
			u(x)\neq 0,\qquad x_{0} < x < x_{1}.
		\end{align}
		Let \(u_{0}\) be the solution of \crefrange{eq:uode}{eq:uode-c} such that \(u_{0} (x_{0}) = 0\), \(u'_{0} (x_{0}) = 1\), and let \(X (x_{0})\) be the first zero of \( u_{0} (x)\) to the right of \(x_{0}\), if any such zero exists (otherwise let \(X (x_{0}) = + \infty\)).	
\begin{enumerate}[label=(\roman*)]
            \item If \(\phi\) satisfies
		\begin{align*}
        \begin{cases}
			\phi'' > p_{1}\, \phi' + p_{2}\, \phi + q, & x \geq x_{0},\\
			\phi(x_{0}) = y(x_{0}), \\ \phi'(x_{0}) = y'(x_{0}),
            \end{cases}
		\end{align*}
		then
		\begin{align}\label{ineq:phiy}
			\phi(x) > y(x),\qquad x_{0} <x \leq x_{1}.
		\end{align}
            \item The interval \(x_{0} < x < X(x_{0})\) is the largest one in which the inequality \cref{ineq:phiy} holds.
        \end{enumerate} 
	\end{lemma}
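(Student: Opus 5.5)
The statement is \cref{le:comparison the}. The plan is to reduce it to a Sturm-type argument on the difference $w \coloneqq \phi - y$. Subtracting the equation for $y$ from the differential inequality for $\phi$ gives
\begin{align*}
w'' - p_1 w' - p_2 w = r(x) > 0, \qquad w(x_0) = 0, \quad w'(x_0) = 0.
\end{align*}
We must show that $w > 0$ on $(x_0, x_1]$, and that this positivity cannot persist past $X(x_0)$.

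First, we establish local positivity near $x_0$. Since $w(x_0) = w'(x_0) = 0$ and $w''(x_0) = r(x_0) > 0$, Taylor expansion gives $w > 0$ on some interval $(x_0, x_0 + \varepsilon)$.

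Second, we use a Wronskian identity with the solution $u_0$ of the homogeneous equation. Set
\begin{align*}
\mu(x) \coloneqq \exp\Big(-\int_{x_0}^x p_1\Big), \qquad W \coloneqq \mu\,(u_0 w' - u_0' w).
\end{align*}
A direct computation gives $W' = \mu\, u_0\, r$. We have $W(x_0) = 0$, because $u_0(x_0) = 0$ and $w(x_0) = w'(x_0) = 0$. Moreover $u_0 > 0$ on $(x_0, X(x_0))$, since $u_0'(x_0) = 1$ and $X(x_0)$ is its first zero. Hence $W > 0$ on that interval, which means $(w/u_0)' = W/(\mu u_0^2) > 0$.

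Third, we need the sign of $w/u_0$ near $x_0$. By l'Hôpital's rule, $w/u_0 \to w'(x_0)/u_0'(x_0) = 0$ as $x \to x_0^+$. Since $w/u_0$ is strictly increasing and starts from $0$, we get $w > 0$ on $(x_0, X(x_0))$. This yields (i) on $(x_0, x_1]$, under the implicit assumption that $x_1 \le X(x_0)$; this is the standing hypothesis \cref{eq:uode-c}, read for the specific solution $u_0$. The endpoint $x_1$ needs a small extra argument if $x_1 = X(x_0)$. In that case $W(x_1) > 0$ and $u_0(x_1) = 0$ force $-\mu u_0'(x_1)\, w(x_1) > 0$. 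Since $u_0'(x_1) < 0$ at a simple zero, this gives $w(x_1) > 0$.

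For (ii), maximality, the plan is to exhibit a source term $r$ for which positivity fails immediately past $X(x_0)$. Take $r = \delta\, \psi$ with $\psi \ge 0$ concentrated near $x_0$ and $\delta \to 0$. Then $w$ is approximately $c\, u_0$ beyond the support of $\psi$, with $c > 0$. Because $u_0$ has a simple zero at $X(x_0)$, it changes sign there, so $w$ becomes negative just past $X(x_0)$.

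I expect the main obstacle to be (ii). One has to make precise that the inequality is claimed to be sharp over all admissible $\phi$, not for every $\phi$ individually. The approximation argument needs continuous dependence on the data, plus the fact that a simple zero persists under small perturbations, so I would state this part as the classical sharpness result of \cite{MM1949}.

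For the paper's application ($p_1 = 0$, $p_2 = 4\lambda_i > 0$), $u_0 = \sinh(2\sqrt{\lambda_i}\,x)/(2\sqrt{\lambda_i})$ never vanishes. Therefore $X = +\infty$, and only part (i) is actually needed.
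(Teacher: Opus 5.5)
\textbf{Part (i): correct, by a different route from the paper.}

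The paper does not prove this lemma. It quotes it from \cite[Section~6]{MM1949}, which uses Peixoto's generalized-convexity approach, and attributes the sharpness part (ii) to \cite{MR16827,MR19813}. Your Wronskian proof of (i) is self-contained and correct. The function $W=\mu(u_0w'-u_0'w)$ satisfies $W'=\mu u_0 r$, so $w/u_0$ increases strictly on $(x_0,X(x_0))$ starting from the limit $0$; your Taylor step is then redundant.

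Compared with the citation, your argument is transparent and needs $r>0$ only on the open interval $(x_0,X(x_0))$. That is convenient, since \cref{eq:soo} is stated only for $t>0$. It also covers the paper's only use of the lemma: there $p_1=0$, $p_2=4\lambda_i$, $q=0$ and $X=+\infty$, exactly as checked in Step~3 of the proof of \cref{the:instability_linearized}.

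Two small additions:
\begin{itemize}
\item Your endpoint computation $W(X)=-\mu(X)u_0'(X)w(X)>0$ shows $w(X)>0$. So positivity actually holds on $(x_0,X]$, and (ii) should be read as saying that no interval $(x_0,b]$ with $b>X$ works for all admissible $\phi$.
\item You need not read \cref{eq:uode-c} as a hypothesis on $u_0$ itself. Suppose some solution $u>0$ on $(x_0,x_1)$ is given and $X<x_1$. By Abel's identity, $\mu(uu_0'-u'u_0)$ is constant. At $x_0$ it equals $u(x_0)\ge 0$, but at $X$ it equals $\mu(X)u(X)u_0'(X)<0$, a contradiction. Hence $x_1\le X(x_0)$ always.
\end{itemize}

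\textbf{Part (ii): your heuristic fails as written, but is easily repaired.}

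You take $r=\delta\psi$ with $\psi\ge 0$ concentrated near $x_0$. This violates the strict inequality, which is required on all of $[x_0,\infty)$. Moreover, $r\mapsto w$ is linear, so the factor $\delta$ changes nothing.

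The repair is elementary and makes the citation unnecessary, with no perturbation theory for zeros:
\begin{itemize}
\item Take $r=\psi+\delta$, with $\psi\ge0$ continuous, supported in $[x_0,x_0+\eta]$, and $\int\psi=1$.
\item Write $w(x)=\int_{x_0}^{x}v_s(x)\,r(s)\,\mathrm d s$, where $v_s$ is the homogeneous solution with $v_s(s)=0$ and $v_s'(s)=1$; in particular $v_{x_0}=u_0$.
\item Given $b>X$, pick $x^*\in(X,b]$ with $u_0(x^*)<0$. This is possible because the zero of $u_0$ at $X$ is simple.
\item Since $s\mapsto v_s(x^*)$ is continuous at $s=x_0$, we get $w(x^*)<0$ once $\eta$ and $\delta$ are small. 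The function $\phi\coloneqq y+w$ is admissible, so the inequality fails inside $(x_0,b]$.
\end{itemize}
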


Finally, we state a lemma on symmetric matrices (proven in \cite[Lemma 4.1 and Lemma 7.1]{guo2025}).

    \begin{lemma}\label{le:lambda}
		For any positive integer \(k\), consider the symmetric matrix
		\begin{equation}\label{matrixAk}
			A_{k} = \left(\begin{array}{lc}
				a_{k}^{+} & \varepsilon_{k}^{+} \\
				\varepsilon_{k}^{+} & a_{k+2}^{+}
			\end{array}\right),
		\end{equation}
		where
		\begin{align*}
			a_{k}^{+} \coloneqq \left( d_{k}^{+} - d_{k+2}^{+} \right)^{2} = \left( -\frac{1}{2} - \frac{2 \kappa^{2} - 4 k- 8}{(k+2)^{2} \kappa^{2}} \right)^{2}
		\end{align*}
		and
		\begin{align}\label{epsk}
			\varepsilon_{k}^{+} \coloneqq d_{k}^{+} d_{k+2}^{+} + d_{k+2}^{+} d_{k+4}^{+} - 2 (d_{k+2}^{+})^{2}
			=\frac{-2 \kappa^{3} + 32 k + 32}{(k+2)^{4} (k+4)}.
		\end{align}
        Then \( A_{k} \) is positive definite for all \( k \geq 1\). Moreover, there exist two absolute  constants \( 0 < \lambda_{1} <  \lambda_{2} \)  such that the eigenvalues \( \lambda_{k}^{1}, \lambda_{k}^{2} \) of \( A_{k} \), as well as  coefficients \( a_{k}^{+} \) satisfy the uniform bounds
		\begin{align*}
			0 < \lambda_{1} < \lambda_{\inf} \leq a_{k}^{+}, \, \lambda_{k}^{1}, \, \lambda_{k}^{2} \leq \lambda_{\sup} < \lambda_{2}, \qquad \text{for \( k \geq 1 \),}
		\end{align*}
	 where
		\begin{align*}
			\lambda_{k}^{1} &\coloneqq \frac{a_{k}^{+} + a_{k+2}^{+} - \sqrt{(a_{k}^{+} - a_{k+2}^{+} )^{2} + 4 (\varepsilon_{k}^{+} )^{2} } } {2},\\
			\lambda_{k}^{2} &\coloneqq \frac{a_{k}^{+} + a_{k+2}^{+} + \sqrt{( a_{k}^{+} - a_{k+2}^{+} )^{2} + 4 (\varepsilon_{k}^{+} )^{2} } } {2},
		\end{align*}
		and
		\begin{align*}
			\lambda_{\sup} \coloneqq \sup_{k\geq1} \{ a_{k}^{+}, \lambda_{k}^{1}, \lambda_{k}^{2} \},\qquad  \lambda_{\inf} \coloneqq \inf_{ k \geq 1} \{ a_{k}^{+}, \lambda_{k}^{1}, \lambda_{k}^{2}\}.
		\end{align*}
        Furthermore, 
       \[ \frac{1}{50} < \lambda_{1} < \lambda_{2}<\frac{3}{5}.
        \]
	\end{lemma}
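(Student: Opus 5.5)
The plan is elementary: bound the eigenvalues of each $2\times 2$ matrix $A_k$ by Gershgorin's theorem, checking small $k$ by hand and large $k$ with crude asymptotic bounds. Throughout I read the symbol $\kappa$ in the displayed formulas as the running index $k$, since $\kappa=2$ is fixed here.

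\textbf{Reduction.} Write
\[
a_k^{+}=\Big(\tfrac12+r_k\Big)^2,\qquad r_k\coloneqq\frac{2k^2-4k-8}{(k+2)^2k^2}.
\]
First I verify the closed forms: substitute $d_{2,k}^{+}=\frac{(k-2)^2(k+2)}{4k^2}$ into $\varepsilon_k^{+}=d_kd_{k+2}+d_{k+2}d_{k+4}-2d_{k+2}^2$ and simplify to $\frac{-2k^3+32k+32}{(k+2)^4(k+4)}$. This is a routine rational-function computation.

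Since $A_k$ is real symmetric, Gershgorin's theorem gives
\[
\min\{a_k^+,a_{k+2}^+\}-|\varepsilon_k^+|\;\le\;\lambda_k^1\le\lambda_k^2\;\le\;\max\{a_k^+,a_{k+2}^+\}+|\varepsilon_k^+| .
\]
So everything reduces to two-sided bounds on the scalar sequences $a_k^+$ and $|\varepsilon_k^+|$. Positive definiteness follows once the lower Gershgorin bound is positive (equivalently, $a_k^+a_{k+2}^+>(\varepsilon_k^+)^2$).

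\textbf{Small $k$, by explicit values.}
\begin{itemize}
\item[$\circ$] $k=1$: $r_1=-\frac{10}{9}$, so $d_{2,1}^+-d_{2,3}^+=\frac{11}{18}$ and $a_1^+=\frac{121}{324}\approx 0.373$. Also $\varepsilon_1^+=\frac{62}{405}\approx0.153$.
\item[$\circ$] $k=2$: $a_2^+=\frac{9}{64}\approx0.141$ and $\varepsilon_2^+=\frac{80}{1536}\approx0.052$.
\item[$\circ$] $k=3$: $a_3^+\approx0.241$.
\item[$\circ$] $k=4$: $a_4^+\approx0.264$.
\end{itemize}
Gershgorin then gives lower bounds of about $0.241-0.153\approx0.088$ for $k=1$ and $0.141-0.052\approx0.089$ for $k=2$. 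Both exceed $\frac1{50}$.

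\textbf{Large $k$, by asymptotic bounds.}
\begin{itemize}
\item[$\circ$] For $k\ge3$, $r_k\ge0$ and $r_k\le 2/k^2$, so $\frac14\le a_k^+\le(\frac12+\frac2{k^2})^2$. Hence $a_k^+$ stays in a band around $\frac14$, roughly $[0.24,0.3]$ (the slightly smaller value $a_3^+\approx0.241$ is already covered).
\item[$\circ$] For $k\ge3$, $|\varepsilon_k^+|\le \frac{2k^3+32k+32}{(k+2)^4(k+4)}\lesssim 2/k^2$, which is small. I would check $k=3,4,5$ numerically and use the crude bound beyond some explicit $K_0$, say $K_0=6$.
\end{itemize}

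\textbf{Choice of constants.} Set $\lambda_{\inf}$ and $\lambda_{\sup}$ as in the statement. The computations above give $\lambda_{\inf}\gtrsim0.08$ and $\lambda_{\sup}\le0.373+0.153<0.53$, including the diagonal entries $a_k^+$, which lie in $[0.14,0.38]$. One can then pick absolute constants $\lambda_1,\lambda_2$ with $\frac1{50}<\lambda_1<\lambda_{\inf}$ and $\lambda_{\sup}<\lambda_2<\frac35$.

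\textbf{Main obstacle.} There is no conceptual difficulty. The only care point is the small indices $k=1,2$, where $|\varepsilon_k^+|$ is not small relative to $a_{k+2}^+$ or $a_k^+$. If Gershgorin turned out too lossy there, I would fall back on the exact eigenvalue formula for $\lambda_k^1$, or on the determinant $a_k^+a_{k+2}^+-(\varepsilon_k^+)^2$, evaluated exactly in rationals. The remaining work is bookkeeping: choose a uniform tail threshold $K_0$ so that the asymptotic inequalities are rigorous rather than merely suggestive.
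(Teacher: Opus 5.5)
Your argument is correct. It is also genuinely different from what the paper does, because the paper gives no proof of this lemma: it quotes it from Guo--Jiu's work on the De Gregorio model (Lemmas 4.1 and 7.1 there).

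Your Gershgorin reduction makes the statement self-contained. Since $A_k$ is real symmetric, both $\lambda_k^{1}$ and $\lambda_k^{2}$ lie in $[\min\{a_k^+,a_{k+2}^+\}-|\varepsilon_k^+|,\ \max\{a_k^+,a_{k+2}^+\}+|\varepsilon_k^+|]$, so everything reduces to explicit rational sequences. Your data check out:
\begin{itemize}
\item the closed form of $\varepsilon_k^+$ does follow from $d_{2,k}^+=\tfrac{(k-2)^2(k+2)}{4k^2}$;
\item $a_1^+=\tfrac{121}{324}$, $a_2^+=\tfrac{9}{64}$, $\varepsilon_1^+=\tfrac{62}{405}$ and $\varepsilon_2^+=\tfrac{5}{96}$ are all correct.
\end{itemize}

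The cost of Gershgorin is some loss at $k=1,2$, where the off-diagonal entry is not small. You get $\lambda_{\inf}\ge 0.088$ and $\lambda_{\sup}\le 0.527$. The exact extremes are $\lambda_2^{1}\approx 0.1216$ and $\lambda_1^{2}\approx 0.4741$. The margin to $\tfrac{1}{50}$ and $\tfrac35$ is still ample, so you never need the exact eigenvalue formula or the determinant fallback.

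What you gain is a uniform tail estimate with no square roots. To make your $K_0=6$ step fully explicit: for $k\ge 6$,
\begin{itemize}
\item $|\varepsilon_k^+|\le \tfrac{2k^3+32k+32}{k^5}=\tfrac{2}{k^2}+\tfrac{32}{k^4}+\tfrac{32}{k^5}<0.09$;
\item $\tfrac14\le a_k^+\le(\tfrac12+\tfrac1{18})^2<0.31$.
\end{itemize}
So all eigenvalues lie in $[0.16,0.40]$, and $k=3,4,5$ are finitely many exact checks.

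Two small fixes.
\begin{itemize}
\item You claim $r_k\ge 0$ for $k\ge 3$, but $r_3=-\tfrac{2}{225}<0$. The numerator $2(k^2-2k-4)$ changes sign at $k=1+\sqrt5$, so the claim holds only for $k\ge 4$. You already treat $k=3$ separately, so nothing breaks, but the sentence should say $k\ge4$.
\item Your parenthetical ``equivalently'' should read ``in particular''. A positive Gershgorin lower bound implies $a_k^+a_{k+2}^+>(\varepsilon_k^+)^2$, but the converse fails.
\end{itemize}
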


\vspace{0.5cm}

\section*{Acknowledgments}

N.~De Nitti is a member of the Gruppo Nazionale per l’Analisi Matematica, la Probabilità e le loro Applicazioni (GNAMPA) of the Istituto Nazionale di Alta Matematica (INdAM). 

We thank M.~Dai and Y.~Sun for some helpful discussions.

\vspace{0.5cm}

\printbibliography

@misc{guo2025,
  title={Stability and instability on the De Gregorio modification of the Constantin-Lax-Majda model}, 
  author={Guo, Jie and Jiu, Quansen},
  year={2025},
  eprint={2506.02800},
  archivePrefix={arXiv},
  primaryClass={math.AP},
  url={https://arxiv.org/abs/2506.02800},
}

@article{PhysRev,
  title = {The Hydromagnetic Equations},
  author = {Elsasser, Walter M.},
  journal = {Phys. Rev.},
  volume = {79},
  issue = {1},
  pages = {183--183},
  year = {1950},
  publisher = {American Physical Society},
  doi = {10.1103/PhysRev.79.183},
  url = {https://link.aps.org/doi/10.1103/PhysRev.79.183}
}

@book {MR4703940,
    AUTHOR = {Taylor, Michael E.},
     TITLE = {Partial differential equations {I}. {B}asic theory},
    SERIES = {Applied Mathematical Sciences},
    VOLUME = {115},
      NOTE = {Third edition [of  1395148]},
 PUBLISHER = {Springer, Cham},
      YEAR = {2023},
     PAGES = {xxiv+714},
      ISBN = {978-3-031-33858-8},
   MRCLASS = {35-01 (46N20 47F05 47N20)},
  MRNUMBER = {4703940},
       DOI = {10.1007/978-3-031-33859-5},
       URL = {https://doi.org/10.1007/978-3-031-33859-5},
}

@article{zbMATH08051718,
 author = {Bianchini, Roberta and Elgindi, Tarek M.},
 title = {Finite-time singularity formation for scalar stretching equations},
 fjournal = {Nonlinearity},
 journal = {Nonlinearity},
 issn = {0951-7715},
 volume = {38},
 number = {7},
 pages = {11},
 note = {Id/No 075003},
 year = {2025},
 doi = {10.1088/1361-6544/addcae},
 zbMATH = {8051718}
}

@article {MR4300258,
    AUTHOR = {Lushnikov, Pavel M. and Silantyev, Denis A. and Siegel,
              Michael},
     TITLE = {Collapse versus blow-up and global existence in the
              generalized {C}onstantin-{L}ax-{M}ajda equation},
   JOURNAL = {J. Nonlinear Sci.},
  FJOURNAL = {Journal of Nonlinear Science},
    VOLUME = {31},
      YEAR = {2021},
    NUMBER = {5},
     PAGES = {Paper No. 82, 56},
      ISSN = {0938-8974},
   MRCLASS = {35Q35 (76B03)},
  MRNUMBER = {4300258},
       DOI = {10.1007/s00332-021-09737-x},
       URL = {https://doi.org/10.1007/s00332-021-09737-x},
}

@article {MR2680191,
    AUTHOR = {Castro, A. and C\'{o}rdoba, D.},
     TITLE = {Infinite energy solutions of the surface quasi-geostrophic
              equation},
   JOURNAL = {Adv. Math.},
  FJOURNAL = {Advances in Mathematics},
    VOLUME = {225},
      YEAR = {2010},
    NUMBER = {4},
     PAGES = {1820--1829},
      ISSN = {0001-8708},
   MRCLASS = {35L65 (35B44 35C06 35Q35 44A15 76B03 86A10)},
  MRNUMBER = {2680191},
MRREVIEWER = {Catherine Choquet},
       DOI = {10.1016/j.aim.2010.04.018},
       URL = {https://doi.org/10.1016/j.aim.2010.04.018},
}

@article {MR2388660,
    AUTHOR = {Hou, Thomas Y. and Li, Congming},
     TITLE = {Dynamic stability of the three-dimensional axisymmetric
              {N}avier-{S}tokes equations with swirl},
   JOURNAL = {Comm. Pure Appl. Math.},
  FJOURNAL = {Communications on Pure and Applied Mathematics},
    VOLUME = {61},
      YEAR = {2008},
    NUMBER = {5},
     PAGES = {661--697},
      ISSN = {0010-3640},
   MRCLASS = {35Q30 (35B35 76D05)},
  MRNUMBER = {2388660},
MRREVIEWER = {Ivan Stra\v{s}kraba},
       DOI = {10.1002/cpa.20212},
       URL = {https://doi.org/10.1002/cpa.20212},
}

@article {MR3621816,
    AUTHOR = {Hoang, Vu and Radosz, Maria},
     TITLE = {Cusp formation for a nonlocal evolution equation},
   JOURNAL = {Arch. Ration. Mech. Anal.},
  FJOURNAL = {Archive for Rational Mechanics and Analysis},
    VOLUME = {224},
      YEAR = {2017},
    NUMBER = {3},
     PAGES = {1021--1036},
      ISSN = {0003-9527},
   MRCLASS = {35Q86 (35L60 86A05 86A10)},
  MRNUMBER = {3621816},
       DOI = {10.1007/s00205-017-1094-3},
       URL = {https://doi.org/10.1007/s00205-017-1094-3},
}

@article {MR4259877,
    AUTHOR = {Elgindi, Tarek M. and Ghoul, Tej-Eddine and Masmoudi, Nader},
     TITLE = {Stable self-similar blow-up for a family of nonlocal transport
              equations},
   JOURNAL = {Anal. PDE},
  FJOURNAL = {Analysis \& PDE},
    VOLUME = {14},
      YEAR = {2021},
    NUMBER = {3},
     PAGES = {891--908},
      ISSN = {2157-5045},
   MRCLASS = {35Q31 (35Q35)},
  MRNUMBER = {4259877},
MRREVIEWER = {Kyudong Choi},
       DOI = {10.2140/apde.2021.14.891},
       URL = {https://doi.org/10.2140/apde.2021.14.891},
}

@article {MR4284536,
    AUTHOR = {Chen, Jiajie},
     TITLE = {On the slightly perturbed {D}e {G}regorio model on {$S^1$}},
   JOURNAL = {Arch. Ration. Mech. Anal.},
  FJOURNAL = {Archive for Rational Mechanics and Analysis},
    VOLUME = {241},
      YEAR = {2021},
    NUMBER = {3},
     PAGES = {1843--1869},
      ISSN = {0003-9527},
   MRCLASS = {35Q31},
  MRNUMBER = {4284536},
       DOI = {10.1007/s00205-021-01685-w},
       URL = {https://doi.org/10.1007/s00205-021-01685-w},
}

@article {MR4630486,
    AUTHOR = {Huang, De and Tong, Jiajun and Wei, Dongyi},
     TITLE = {On self-similar finite-time blowups of the de {G}regorio model
              on the real line},
   JOURNAL = {Comm. Math. Phys.},
  FJOURNAL = {Communications in Mathematical Physics},
    VOLUME = {402},
      YEAR = {2023},
    NUMBER = {3},
     PAGES = {2791--2829},
      ISSN = {0010-3616},
   MRCLASS = {35B44 (35Q31)},
  MRNUMBER = {4630486},
       DOI = {10.1007/s00220-023-04784-9},
       URL = {https://doi.org/10.1007/s00220-023-04784-9},
}

@article {MR4721709,
    AUTHOR = {Huang, De and Qin, Xiang and Wang, Xiuyuan and Wei, Dongyi},
     TITLE = {Self-similar finite-time blowups with smooth profiles of the
              generalized {C}onstantin-{L}ax-{M}ajda model},
   JOURNAL = {Arch. Ration. Mech. Anal.},
  FJOURNAL = {Archive for Rational Mechanics and Analysis},
    VOLUME = {248},
      YEAR = {2024},
    NUMBER = {2},
     PAGES = {Paper No. 22, 65},
      ISSN = {0003-9527},
   MRCLASS = {35Q31 (35B44 76B47)},
  MRNUMBER = {4721709},
MRREVIEWER = {In-Jee Jeong},
       DOI = {10.1007/s00205-024-01971-3},
       URL = {https://doi.org/10.1007/s00205-024-01971-3},
}

@article {MR4065651,
    AUTHOR = {Elgindi, Tarek M. and Jeong, In-Jee},
     TITLE = {On the effects of advection and vortex stretching},
   JOURNAL = {Arch. Ration. Mech. Anal.},
  FJOURNAL = {Archive for Rational Mechanics and Analysis},
    VOLUME = {235},
      YEAR = {2020},
    NUMBER = {3},
     PAGES = {1763--1817},
      ISSN = {0003-9527},
   MRCLASS = {76B47 (35Q35 76B03)},
  MRNUMBER = {4065651},
       DOI = {10.1007/s00205-019-01455-9},
       URL = {https://doi.org/10.1007/s00205-019-01455-9},
}

@article {MR4242826,
    AUTHOR = {Chen, Jiajie and Hou, Thomas Y. and Huang, De},
     TITLE = {On the finite time blowup of the {D}e {G}regorio model for the
              3{D} {E}uler equations},
   JOURNAL = {Comm. Pure Appl. Math.},
  FJOURNAL = {Communications on Pure and Applied Mathematics},
    VOLUME = {74},
      YEAR = {2021},
    NUMBER = {6},
     PAGES = {1282--1350},
      ISSN = {0010-3640},
   MRCLASS = {35Q30 (76B03 76D03)},
  MRNUMBER = {4242826},
       DOI = {10.1002/cpa.21991},
       URL = {https://doi.org/10.1002/cpa.21991},
}

@article {MR2963789,
    AUTHOR = {Guo, Yan and Tice, Ian},
     TITLE = {Compressible, inviscid {R}ayleigh-{T}aylor instability},
   JOURNAL = {Indiana Univ. Math. J.},
  FJOURNAL = {Indiana University Mathematics Journal},
    VOLUME = {60},
      YEAR = {2011},
    NUMBER = {2},
     PAGES = {677--711},
      ISSN = {0022-2518},
   MRCLASS = {35Q35 (35Q31)},
  MRNUMBER = {2963789},
MRREVIEWER = {Rapha\"{e}l Danchin},
       DOI = {10.1512/iumj.2011.60.4193},
       URL = {https://doi.org/10.1512/iumj.2011.60.4193},
}

@book {MR3244169,
    AUTHOR = {Debnath, Lokenath and Bhatta, Dambaru},
     TITLE = {Integral transforms and their applications},
   EDITION = {Third edition},
 PUBLISHER = {CRC Press, Boca Raton, FL},
      YEAR = {2015},
     PAGES = {xxvi+792},
      ISBN = {978-1-4822-2357-6},
   MRCLASS = {44-01 (35A22 35A25)},
  MRNUMBER = {3244169},
}

@article {MR4875613,
    AUTHOR = {Chen, Jiajie},
     TITLE = {On the regularity of the {D}e {G}regorio model for the 3{D}
              {E}uler equations},
   JOURNAL = {J. Eur. Math. Soc. (JEMS)},
  FJOURNAL = {Journal of the European Mathematical Society (JEMS)},
    VOLUME = {27},
      YEAR = {2025},
    NUMBER = {4},
     PAGES = {1619--1677},
      ISSN = {1435-9855},
   MRCLASS = {35Q31 (35B65 35Q35)},
  MRNUMBER = {4875613},
       DOI = {10.4171/jems/1399},
       URL = {https://doi.org/10.4171/jems/1399},
}

@article{zbMATH04172622,
 author = {De Gregorio, Salvatore},
 title = {On a one-dimensional model for the three-dimensional vorticity equation},
 fjournal = {Journal of Statistical Physics},
 journal = {J. Stat. Phys.},
 issn = {0022-4715},
 volume = {59},
 number = {5-6},
 pages = {1251--1263},
 year = {1990},
 doi = {10.1007/BF01334750},
 zbMATH = {4172622},
 Zbl = {0712.76027}
}

@misc{daiArXiv,
      title={1D Model for the 3D magnetohydrodynamics}, 
      author={Mimi Dai and Bhakti Vyas and Xiangxiong Zhang},
      eprint={2107.02920v2},
      year={2021},
      archivePrefix={arXiv},
      primaryClass={math.AP},
      url={https://arxiv.org/pdf/2107.02920v2}, 
}

@article {MR16827,
    AUTHOR = {Petrov, V. N.},
     TITLE = {The limits of applicability of {S}. {T}chaplygin's theorem on
              differential inequalities to linear equations with usual
              derivatives of the second order},
   JOURNAL = {C. R. (Doklady) Acad. Sci. URSS (N.S.)},
  FJOURNAL = {C. R. (Doklady) Acad. Sci. URSS (N.S.)},
    VOLUME = {51},
      YEAR = {1946},
     PAGES = {255--258},
   MRCLASS = {36.0X},
  MRNUMBER = {16827},
MRREVIEWER = {J. E. Wilkins, Jr.},
}

@article {MR19813,
    AUTHOR = {Wilkins, Jr., J. Ernest},
     TITLE = {The converse of a theorem of {T}chaplygin on differential
              inequalities},
   JOURNAL = {Bull. Amer. Math. Soc.},
  FJOURNAL = {Bulletin of the American Mathematical Society},
    VOLUME = {53},
      YEAR = {1947},
     PAGES = {126--129},
      ISSN = {0002-9904},
   MRCLASS = {36.0X},
  MRNUMBER = {19813},
MRREVIEWER = {R. Bellman},
       DOI = {10.1090/S0002-9904-1947-08759-0},
       URL = {https://doi.org/10.1090/S0002-9904-1947-08759-0},
}

@book {MR1721989,
    AUTHOR = {Engel, Klaus-Jochen and Nagel, Rainer},
     TITLE = {One-parameter semigroups for linear evolution equations},
    SERIES = {Graduate Texts in Mathematics},
    VOLUME = {194},
      NOTE = {With contributions by S. Brendle, M. Campiti, T. Hahn, G.
              Metafune, G. Nickel, D. Pallara, C. Perazzoli, A. Rhandi, S.
              Romanelli and R. Schnaubelt},
 PUBLISHER = {Springer-Verlag, New York},
      YEAR = {2000},
     PAGES = {xxii+586},
      ISBN = {0-387-98463-1},
   MRCLASS = {47D06 (34G10 35K90 47N20)},
  MRNUMBER = {1721989},
MRREVIEWER = {Charles Batty},
}

@article {sun2025,
    AUTHOR = {Sun, Yunhao},
     TITLE = {Stability of a stationary solution to a 1-{D} model for the
              {MHD}},
   JOURNAL = {Discrete Contin. Dyn. Syst.},
  FJOURNAL = {Discrete and Continuous Dynamical Systems. Series A},
    VOLUME = {45},
      YEAR = {2025},
    NUMBER = {10},
     PAGES = {3545--3564},
      ISSN = {1078-0947},
   MRCLASS = {35B35 (35Q35 76B03 76W05)},
  MRNUMBER = {4897301},
       DOI = {10.3934/dcds.2025031},
       URL = {https://doi.org/10.3934/dcds.2025031},
}

@article {dai2023,
    AUTHOR = {Dai, Mimi and Vyas, Bhakti and Zhang, Xiangxiong},
     TITLE = {1{D} model for the 3{D} magnetohydrodynamics},
   JOURNAL = {J. Nonlinear Sci.},
  FJOURNAL = {Journal of Nonlinear Science},
    VOLUME = {33},
      YEAR = {2023},
    NUMBER = {5},
     PAGES = {Paper No. 87, 38},
      ISSN = {0938-8974},
   MRCLASS = {35Q35 (65M70 76B03 76D03 76W05)},
  MRNUMBER = {4621540},
MRREVIEWER = {Luc Paquet},
       DOI = {10.1007/s00332-023-09944-8},
       URL = {https://doi.org/10.1007/s00332-023-09944-8},
}

@article {JSS2019,
    AUTHOR = {Jia, H. and Stewart, S. and Sverak, V.},
     TITLE = {On the {D}e {G}regorio modification of the
              {C}onstantin-{L}ax-{M}ajda model},
   JOURNAL = {Arch. Ration. Mech. Anal.},
  FJOURNAL = {Archive for Rational Mechanics and Analysis},
    VOLUME = {231},
      YEAR = {2019},
    NUMBER = {2},
     PAGES = {1269--1304},
      ISSN = {0003-9527},
   MRCLASS = {35Q35},
  MRNUMBER = {3900823},
MRREVIEWER = {Alberto Valli},
       DOI = {10.1007/s00205-018-1298-1},
       URL = {https://doi.org/10.1007/s00205-018-1298-1},
}

@article {MR2439488,
    AUTHOR = {Okamoto, Hisashi and Sakajo, Takashi and Wunsch, Marcus},
     TITLE = {On a generalization of the {C}onstantin-{L}ax-{M}ajda
              equation},
   JOURNAL = {Nonlinearity},
  FJOURNAL = {Nonlinearity},
    VOLUME = {21},
      YEAR = {2008},
    NUMBER = {10},
     PAGES = {2447--2461},
      ISSN = {0951-7715},
   MRCLASS = {76B03 (35Q35)},
  MRNUMBER = {2439488},
MRREVIEWER = {Koji Ohkitani},
       DOI = {10.1088/0951-7715/21/10/013},
       URL = {https://doi.org/10.1088/0951-7715/21/10/013},
}

@article {MR2179734,
    AUTHOR = {C\'{o}rdoba, Antonio and C\'{o}rdoba, Diego and Fontelos, Marco A.},
     TITLE = {Formation of singularities for a transport equation with
              nonlocal velocity},
   JOURNAL = {Ann. of Math. (2)},
  FJOURNAL = {Annals of Mathematics. Second Series},
    VOLUME = {162},
      YEAR = {2005},
    NUMBER = {3},
     PAGES = {1377--1389},
      ISSN = {0003-486X},
   MRCLASS = {35A20 (35Q35 82C70)},
  MRNUMBER = {2179734},
MRREVIEWER = {Zoran Gruji\'{c}},
       DOI = {10.4007/annals.2005.162.1377},
       URL = {https://doi.org/10.4007/annals.2005.162.1377},
}

@article{zbMATH05531021,
 author = {Hou, Thomas Y. and Lei, Zhen},
 title = {On the stabilizing effect of convection in three-dimensional incompressible flows},
 fjournal = {Communications on Pure and Applied Mathematics},
 journal = {Commun. Pure Appl. Math.},
 issn = {0010-3640},
 volume = {62},
 number = {4},
 pages = {501--564},
 year = {2009},
 doi = {10.1002/cpa.20254},
 zbMATH = {5531021},
 Zbl = {1171.35095}
}

@article {MR812343,
    AUTHOR = {Constantin, P. and Lax, P. D. and Majda, A.},
     TITLE = {A simple one-dimensional model for the three-dimensional
              vorticity equation},
   JOURNAL = {Comm. Pure Appl. Math.},
  FJOURNAL = {Communications on Pure and Applied Mathematics},
    VOLUME = {38},
      YEAR = {1985},
    NUMBER = {6},
     PAGES = {715--724},
      ISSN = {0010-3640},
   MRCLASS = {76C05 (35Q99)},
  MRNUMBER = {812343},
MRREVIEWER = {W. O. Criminale, Jr.},
       DOI = {10.1002/cpa.3160380605},
       URL = {https://doi.org/10.1002/cpa.3160380605},
}

@article {DG1996,
    AUTHOR = {De Gregorio, Salvatore},
     TITLE = {A partial differential equation arising in a {$1$}{D} model
              for the {$3$}{D} vorticity equation},
   JOURNAL = {Math. Methods Appl. Sci.},
  FJOURNAL = {Mathematical Methods in the Applied Sciences},
    VOLUME = {19},
      YEAR = {1996},
    NUMBER = {15},
     PAGES = {1233--1255},
      ISSN = {0170-4214},
   MRCLASS = {76C05 (35Q53)},
  MRNUMBER = {1410208},
MRREVIEWER = {J. Thomas Beale},
       DOI =
              {10.1002/(SICI)1099-1476(199610)19:15<1233::AID-MMA828>3.3.CO;2-N},
       URL =
              {https://doi.org/10.1002/(SICI)1099-1476(199610)19:15<1233::AID-MMA828>3.3.CO;2-N},
}

@article {LLR2020,
    AUTHOR = {Lei, Zhen and Liu, Jie and Ren, Xiao},
     TITLE = {On the {C}onstantin-{L}ax-{M}ajda model with convection},
   JOURNAL = {Comm. Math. Phys.},
  FJOURNAL = {Communications in Mathematical Physics},
    VOLUME = {375},
      YEAR = {2020},
    NUMBER = {1},
     PAGES = {765--783},
      ISSN = {0010-3616},
   MRCLASS = {35Q35 (35Q31)},
  MRNUMBER = {4082178},
MRREVIEWER = {Matthew R. I. Schrecker},
       DOI = {10.1007/s00220-019-03584-4},
       URL = {https://doi.org/10.1007/s00220-019-03584-4},
}

@article {MM1949,
	AUTHOR = {Matos Peixoto, Mauricio},
	TITLE = {Generalized convex functions and second order differential
	inequalities},
	JOURNAL = {Bull. Amer. Math. Soc.},
	FJOURNAL = {Bulletin of the American Mathematical Society},
	VOLUME = {55},
	YEAR = {1949},
	PAGES = {563--572},
	ISSN = {0002-9904},
	MRCLASS = {27.0X},
	MRNUMBER = {29949},
	MRREVIEWER = {E.\ F.\ Beckenbach},
	DOI = {10.1090/S0002-9904-1949-09246-7},
	URL = {https://doi.org/10.1090/S0002-9904-1949-09246-7},
}

@article{zbMATH08048827,
 author = {Dai, Mimi},
 title = {Reduced models for electron magnetohydrodynamics: well-posedness and singularity formation},
 fjournal = {Transactions of the American Mathematical Society},
 journal = {Trans. Am. Math. Soc.},
 issn = {0002-9947},
 volume = {378},
 number = {6},
 pages = {3981--4009},
 year = {2025},
 doi = {10.1090/tran/9427},
 zbMATH = {8048827}
}

@article {MR2662457,
    AUTHOR = {Kiselev, A.},
     TITLE = {Regularity and blow up for active scalars},
   JOURNAL = {Math. Model. Nat. Phenom.},
  FJOURNAL = {Mathematical Modelling of Natural Phenomena},
    VOLUME = {5},
      YEAR = {2010},
    NUMBER = {4},
     PAGES = {225--255},
      ISSN = {0973-5348},
   MRCLASS = {35Q35 (35B44 76U05)},
  MRNUMBER = {2662457},
MRREVIEWER = {Masahito Ohta},
       DOI = {10.1051/mmnp/20105410},
       URL = {https://doi.org/10.1051/mmnp/20105410},
}

@article {MR840339,
    AUTHOR = {Schochet, Steven},
     TITLE = {Explicit solutions of the viscous model vorticity equation},
   JOURNAL = {Comm. Pure Appl. Math.},
  FJOURNAL = {Communications on Pure and Applied Mathematics},
    VOLUME = {39},
      YEAR = {1986},
    NUMBER = {4},
     PAGES = {531--537},
      ISSN = {0010-3640},
   MRCLASS = {35Q20 (76C05)},
  MRNUMBER = {840339},
MRREVIEWER = {W. O. Criminale, Jr.},
       DOI = {10.1002/cpa.3160390404},
       URL = {https://doi.org/10.1002/cpa.3160390404},
}

@article {MR2397459,
    AUTHOR = {Li, Dong and Rodrigo, Jose},
     TITLE = {Blow-up of solutions for a 1{D} transport equation with
              nonlocal velocity and supercritical dissipation},
   JOURNAL = {Adv. Math.},
  FJOURNAL = {Advances in Mathematics},
    VOLUME = {217},
      YEAR = {2008},
    NUMBER = {6},
     PAGES = {2563--2568},
      ISSN = {0001-8708},
   MRCLASS = {35F20 (35B40 76B03)},
  MRNUMBER = {2397459},
MRREVIEWER = {Hidetoshi Tahara},
       DOI = {10.1016/j.aim.2007.11.002},
       URL = {https://doi.org/10.1016/j.aim.2007.11.002},
}

@article {MR2865810,
    AUTHOR = {Wunsch, Marcus},
     TITLE = {The generalized {C}onstantin-{L}ax-{M}ajda equation revisited},
   JOURNAL = {Commun. Math. Sci.},
  FJOURNAL = {Communications in Mathematical Sciences},
    VOLUME = {9},
      YEAR = {2011},
    NUMBER = {3},
     PAGES = {929--936},
      ISSN = {1539-6746},
   MRCLASS = {35Q35 (35A20 35B44 76B03)},
  MRNUMBER = {2865810},
MRREVIEWER = {Takashi Suzuki},
       DOI = {10.4310/CMS.2011.v9.n3.a12},
       URL = {https://doi.org/10.4310/CMS.2011.v9.n3.a12},
}

@article {MR4105366,
    AUTHOR = {Chen, Jiajie},
     TITLE = {Singularity formation and global well-posedness for the
              generalized {C}onstantin-{L}ax-{M}ajda equation with
              dissipation},
   JOURNAL = {Nonlinearity},
  FJOURNAL = {Nonlinearity},
    VOLUME = {33},
      YEAR = {2020},
    NUMBER = {5},
     PAGES = {2502--2532},
      ISSN = {0951-7715},
   MRCLASS = {35C06 (35A20 35B30 76D03)},
  MRNUMBER = {4105366},
MRREVIEWER = {Dmitrii Legatiuk},
       DOI = {10.1088/1361-6544/ab74b0},
       URL = {https://doi.org/10.1088/1361-6544/ab74b0},
}

@article{zbMATH07789595,
 author = {Ambrose, David M. and Lushnikov, Pavel M. and Siegel, Michael and Silantyev, Denis A.},
 title = {Global existence and singularity formation for the generalized {Constantin}-{Lax}-{Majda} equation with dissipation: the real line vs. periodic domains},
 fjournal = {Nonlinearity},
 journal = {Nonlinearity},
 issn = {0951-7715},
 volume = {37},
 number = {2},
 pages = {43},
 note = {Id/No 025004},
 year = {2024},
 doi = {10.1088/1361-6544/ad140c},
 zbMATH = {7789595},
 Zbl = {1530.35205}
}

@misc{dai2025wellposednessblowup1delectron,
      title={Well-posedness and blowup of 1D electron magnetohydrodynamics}, 
      author={Mimi Dai},
      year={2025},
      eprint={2503.06383},
      archivePrefix={arXiv},
      primaryClass={math.AP},
      url={https://arxiv.org/abs/2503.06383}, 
}

@book {BN1971,
    AUTHOR = {Butzer, Paul L. and Nessel, Rolf J.},
     TITLE = {Fourier analysis and approximation. {Vol}. 1: {One}-dimensional theory},
    SERIES = {Pure and Applied Mathematics, Vol. 40},
 PUBLISHER = {Academic Press, New York-London},
      YEAR = {1971},
     PAGES = {xvi+553},
   MRCLASS = {42A08 (41-01)},
  MRNUMBER = {510857},
}

@article {JJN2013,
    AUTHOR = {Jiang, Fei and Jiang, Song and Ni, GuoXi},
     TITLE = {Nonlinear instability for nonhomogeneous incompressible
              viscous fluids},
   JOURNAL = {Sci. China Math.},
  FJOURNAL = {Science China. Mathematics},
    VOLUME = {56},
      YEAR = {2013},
    NUMBER = {4},
     PAGES = {665--686},
      ISSN = {1674-7283,1869-1862},
   MRCLASS = {35Q35 (35B65 76D03 76E30)},
  MRNUMBER = {3034832},
MRREVIEWER = {Kazuo\ Yamazaki},
       DOI = {10.1007/s11425-013-4587-z},
       URL = {https://doi.org/10.1007/s11425-013-4587-z},
}

@book {NS2004,
    AUTHOR = {Novotn\'y, A. and Stra\v skraba, I.},
     TITLE = {Introduction to the mathematical theory of compressible flow},
    SERIES = {Oxford Lecture Series in Mathematics and its Applications},
    VOLUME = {27},
 PUBLISHER = {Oxford University Press, Oxford},
      YEAR = {2004},
     PAGES = {xx+506},
      ISBN = {0-19-853084-6},
   MRCLASS = {35Q35 (35Q30 76N10)},
  MRNUMBER = {2084891},
MRREVIEWER = {Rapha\"el\ Danchin},
}

\vfill 

\end{document}